\newcommand{\coker}{\operatorname{coker}}
\newcommand{\coend}{\! \int \! }
\newcommand{\cs}{{ \bbc^{\times} } }
\newcommand{\zs}{\bbz^\times}
\newcommand{\rs}{\bbr^\times}
\newcommand{\csi}[1]{{(\bbc^\times)^{#1}_i}}
\newcommand{\cone}{\operatorname{Cone}}
\newcommand{\redcxp}{\calc^*(\calz_0(S^{p,p}))}
\newcommand{\spec}{\operatorname{Spec}}
\newcommand{\bor}{\text{bor}}
\newcommand{\uhom}[2]{\underline{Hom}(#1,#2)}
\newcommand{\sig}{\aleph}
\newcommand{\sigtor}{\aleph_\text{tor}}
\newcommand{\brAp}{A(p)_{\mathcal{B}r}}       
\newcommand{\brApbor}{A(p)^\bor_{\mathcal{B}r}}       
\newcommand{\brAq}{A(q)_{\mathcal{B}r}}       
\newcommand{\brZp}{\bbz(p)_{\mathcal{B}r}}    
\newcommand{\br}[2]{{#1}(#2)_{\mathcal{B}r}}  
\newcommand{\dAp}{A(p)_{\mathcal{D}/\bbr}}    
\newcommand{\dApbor}{A(p)_{\mathcal{D}/\bbr}^\bor}    
\newcommand{\dAq}{A(q)_{\mathcal{D}/{\bbr}}}    
\newcommand{\dZp}{\bbz(p)_{\mathcal{D}/{\bbr}}} 
\newcommand{\dZpc}{\bbz(p)_{\mathcal{D}/{\bbc}}} 
\newcommand{\de}[2]{{#1}(#2)_{\mathcal{D}/\bbr}} 
\newcommand{\ab}{Ab}
\newcommand{\rav}{\cala n_{\! /\bbr}}      
\newcommand{\cav}{\cala n_{\! /\bbc}}      
\newcommand{\gman}{\frS\text{-}\calm an}    
\newcommand{\gtop}{\frS\text{-}\calt op}
\newcommand{\gfin}{\frS\text{-Fin}}  
\newcommand{\srv}{Sm_{\! /\bbr}}        
\newcommand{\scv}{Sm_{/\bbc}}        
\newcommand{\rlbc}[1]{PW^{\nabla}(#1)}
\newcommand{\dcrA}[3]{H^{#1}_{\cald/\bbr}(#3;A(#2))} 
\newcommand{\dcrAbor}[3]{H^{#1}_{\cald/\bbr,\bor}(#3;A(#2))} 
\newcommand{\dccA}[3]{H^{#1}_{\cald/\bbc}(#3;A(#2))} 
\newcommand{\dcr}[3]{H^{#1}_{\cald/\bbr}(#3;\bbz(#2))} 
\newcommand{\dcrr}[3]{H^{#1}_{\cald/\bbr}(#3;\bbr(#2))} 
\newcommand{\dcc}[3]{H^{#1}_{\cald/\bbc}(#3;\bbz(#2))} 
\newcommand{\bcz}[3]{H_{\text{Br}}^{#1,#2}(#3,\uZ)}     
\newcommand{\bca}[3]{H_{\text{Br}}^{#1,#2}(#3,\uA)}     
\newcommand{\bcabor}[3]{H_{\text{bor}}^{#1,#2}(#3,\uA)}     
\newcommand{\bcm}[4]{H_{\text{Br}}^{#1,#2}(#3,#4)}     
\newcommand{\scm}[3]{H_{\text{sing}}^{#1}(#2;#3)}     
\newcommand{\cov}[1]{{Cov}(#1)}
\newcommand{\eqsec}[1]{\Gamma_{\symg{2}}()} 
\newcommand{\sst}{\scriptstyle}
\newcommand{\vD}{\varDelta}
\newcommand{\frS}{\mathfrak{S}}
\newcommand{\field}[1]{\ensuremath{\mathbb{#1}}}
\newcommand{\bba}{\field{A}}
\newcommand{\bbc}{\field{C}}
\newcommand{\bbf}{\field{F}}
\newcommand{\bbh}{\field{H}}
\newcommand{\bbp}{\field{P}}
\newcommand{\bbq}{\field{Q}}
\newcommand{\bbr}{\field{R}}
\newcommand{\bbz}{\field{Z}}
\newcommand{\uA}{\underline{A}}
\newcommand{\uZ}{\underline{\bbz}}
\newcommand{\uM}{\underline{M}}
\newcommand{\cala}{\mathcal{A}}
\newcommand{\calb}{\mathcal{B}}
\newcommand{\calc}{\mathcal{C}}
\newcommand{\cald}{\mathcal{D}}
\newcommand{\cale}{\mathcal{E}}
\newcommand{\calf}{\mathcal{F}}
\newcommand{\calk}{\mathcal{K}}
\newcommand{\call}{\mathcal{L}}
\newcommand{\calm}{\mathcal{M}}
\newcommand{\caln}{\mathcal{N}}
\newcommand{\calo}{\mathcal{O}}
\newcommand{\calp}{\mathcal{P}}
\newcommand{\calr}{\mathcal{R}}
\newcommand{\calt}{\mathcal{T}}
\newcommand{\calu}{\mathcal{U}}
\newcommand{\calz}{\mathcal{Z}}
\newcommand{\bone}{\mathbf{1}}
\newcommand{\mbc}{\mathbf{c}}
\newcommand{\mbh}{\mathbf{h}}
\newcommand{\mbq}{\mathbf{q}}
\newcommand{\mbt}{\mathbf{t}}
\newcommand{\ve}{\varepsilon}
\newcommand{\la}{\langle}
\newcommand{\ra}{\rangle}
\newcommand{\symg}[1]{\mathfrak{S}_{#1}}
\newcommand{\equdef}{:=}
\renewcommand{\hom}[3]{\operatorname{Hom}_{#1}(#2, #3)}
\newtheorem{theorem}{Theorem}[section]
\newtheorem{lemma}[theorem]{Lemma}
\newtheorem{proposition}[theorem]{Proposition}
\newtheorem{corollary}[theorem]{Corollary}
\theoremstyle{definition}
\newtheorem{definition}[theorem]{Definition}
\newtheorem{example}[theorem]{Example}
\newtheorem{properties}[theorem]{Properties}
\newtheorem{remark}[theorem]{Remark}
\theoremstyle{remark}
\newtheorem*{ack}{\bf Acknowledgements}
 \date{May 2008}
 \title{Integral Deligne Cohomology for Real Varieties}
 \author[dos Santos]{Pedro F. dos Santos}
 \address{Departamento de Matem\'atica, Instituto Superior
 T\'ecnico, Portugal}
 \email{pedro.f.santos@math.ist.utl.pt}
 \author[Lima-Filho]{Paulo Lima-Filho}
 \address{Department of
 Mathematics, Texas A{\&}M University, USA}
 \email{plfilho@math.tamu.edu}
 \thanks{The first author was supported in part by  FCT (Portugal)
 through program POCTI}
\begin{document}

\begin{abstract}
We develop an {\bf integra}l version of Deligne cohomology  for smooth proper {\bf real} varieties $Y$. For this purpose the role played by singular cohomology in the complex case has to be replaced by \emph{ordinary bigraded $\frS$-equivariant cohomology}, where $\frS:= Gal(\bbc/\bbr)$. This is the $\frS$-equivariant counterpart of singular cohomology; cf. \cite{may-bull}. We establish the basic properties of the theory and give a geometric interpretation for the groups in dimension $2$ in weights $1$ and $2$.

\end{abstract}

\maketitle

\tableofcontents

\vfill\eject

\section{Introduction}

Integral Deligne cohomology $\dcc{n}{p}{Y}$ for a complex variety $Y$ has been widely studied in the literature. When $Y$ is smooth and proper, it is defined as the hypercohomology group  $\bbh^n(Y,\dZpc)$ of the complex
\begin{equation}
\label{eq:dc}
\dZpc\ : \ 0 \to \bbz(p) \to \calo \xrightarrow{d} \Omega^1 \xrightarrow{d} \to \cdots \to \Omega^{p-1},
\end{equation}
where $\bbz(p)$ is the constant sheaf $(2\pi i)^p\bbz$ and $\Omega^j$ is the sheaf of holomorphic $j$-forms. More generally, one can extend this setting to arbitrary complex varieties using suitable compactifications and simplicial resolutions, along with forms with logarithmic poles, and the resulting theory is called Deligne-Be{\u\i}linson cohomology; see \cite{bei-higher}.  An excellent account can be found in \cite{esnault}. The theory is complemented by a homological counterpart and together they are shown to satisfy Bloch-Ogus' formalism \cite{blo&ogu}; cf. \cite{gil-dh} and \cite{jan-dh}.

In this paper we develop an integral version of Deligne cohomology for smooth proper {\bf real} varieties. For this purpose the role played by singular cohomology $H^n_\text{sing}(Y,\bbz(p))$ in the complex case has to be replaced by \emph{ordinary bigraded $\frS$-equivariant cohomology} $\bcz{n}{p}{Y(\bbc)}$, where $\frS:= Gal(\bbc/\bbr)$. This is the $\frS$-equivariant counterpart of singular cohomology; cf. \cite{may-bull}.

We must emphasize that the ordinary equivariant cohomology of a $\frS$-space $X$ is not obtained as the singular cohomology of the Borel construction $E\frS\times_\frS X$. In fact, the ``Borel version'' of equivariant cohomology is just  $\bcz{n}{0}{E\frS \times X}$ and, more generally,
$\bcz{n}{p}{E\frS \times X} \cong H^n(E\frS\times_\frS X;\bbz(p))$, where the latter denotes cohomology with twisted coefficients $\bbz(p)$. The bigraded version stems from the $RO(\frS)$-graded equivariant cohomology theories developed by J. Peter May et al. in \cite{may-bull}, \cite{may-SLN1213} and \cite{may-CBMS}. With weight $p=0$ ordinary equivariant cohomology was developed in \cite{bredon} and hence, for simplicity, we call it \emph{Bredon cohomology} even with non-zero weights, thus explaining the notation $\bcz{n}{p}{X}$.

From a motivic standpoint, this difference can be expressed by saying that ordinary equivariant cohomology is to its Borel version as motivic cohomology is to its \'etale counterpart (\'etale-motivic cohomology; cf. \cite{maz&wei}). In fact, this setting is much more that a mere analogy, once one observes that the topological realization of the $\bba^1$-homotopy category of schemes over $\bbr$ lands in the $\frS$-equivariant homotopy category. This realization carries motivic cohomology to ordinary bigraded equivariant cohomology, and carries \'etale-motivic cohomology to Borel equivariant cohomology (with twisted coefficients). See \cite{mor&voe-A1} and \cite{dug&isa-A1} for details.

Let $\rav$ denote the category of \emph{real holomorphic manifolds}, whose objects are pairs  $(M,\sigma)$ consisting of a holomorphic manifold $M$ together with an anti-holomorphic involution $\sigma$, and whose morphisms are holomorphic maps commuting with the involutions. We consider those objects as having an action of $\frS$ and give $\rav$ the structure of a site using equivariant open covers. In order to introduce Deligne cohomology for proper real holomorphic manifolds, we construct a real version $\dZp$ of the Deligne complex~\eqref{eq:dc} on the site $\rav$ and define
\begin{equation}
\label{eq:def-dc}
\dcr{n}{p}{M}:= \check{\bbh}^n(M_{eq};{\dZp}).
\end{equation}

The  construction of $\dZp$ involves a replacement of the constant sheaf $\bbz(p)$ by a complex $\brZp$ which computes ordinary equivariant cohomology; cf. \cite{yang-pre}. The key technical construction is a morphism of complexes
$$
\tau_p \colon \brZp \longrightarrow \cale^*,
$$
where $\cale^*$ is the complex of sheaves on $\rav$ such that $\cale^j(X)$ consists of those smooth complex-valued differential $j$-forms invariant under the simultaneous action of $\frS$ on $X\in \rav$ and $\bbc.$ Using this complex we define
$$
\dZp := \cone\left( \brZp \oplus F^p\cale^* \xrightarrow{\tau_p\ -\ \imath}
\cale^*\right)[-1]
$$
where $F^p\cale^*$ is the $p$-th piece of the Hodge filtration.

Amongst the basic properties of the theory are the evident long exact sequences and the fact that it recovers the usual theory for complex varieties. More precisely,
 given a complex projective variety $Y_\bbc$, let and $Y_{\! \bbc/\bbr}$ denote the associated real variety obtained by restriction of scalars.
 \smallskip

{\noindent{\bf Proposition \ref{prop:change}.}\ {\it Let $Y_\bbc$ be a proper complex holomorphic  manifold and let $X$ be a proper smooth real algebraic variety.
\begin{enumerate}[i.]
\item One has  natural isomorphisms
$$
\dcrA{i}{p}{Y_{\bbc/\bbr}} \cong \dccA{i}{p}{Y_\bbc},
$$
where the latter denotes the usual Deligne cohomology of the complex
manifold $Y_\bbc$. (See Remark \ref{rem:notation}.)
\item If $X_\bbc$ is the complex variety obtained from $X$ by base
  extension, the corresponding map of real varieties $X_{\bbc/\bbr}
  \to X$ induces natural homomorphisms
$$
\dcrA{i}{p}{X} \to  \dccA{i}{p}{X_\bbc}^\frS,
$$
where the latter denotes the invariants of the Deligne cohomology of
the complex variety $X_\bbc$.
~
\item  One has a long exact sequence
{\small \begin{multline}
\cdots \to H_\text{sing}^{j-1}(X(\bbc),\bbc)^\frS \to \dcrA{j}{p}{X}\xrightarrow{\nu} \\
\to \bca{j}{p}{X(\bbc)} \oplus \left\{ F^pH_\text{sing}^j(X(\bbc);\bbc)
\right\}^\frS \xrightarrow{\ \ \jmath\circ \varphi - \imath \ \ }
H_\text{sing}^j(X(\bbc);\bbc)^\frS \to \cdots
\end{multline}
}
\end{enumerate}
}
}

\smallskip

\noindent Here $X(\bbc)$ denotes the set of complex points of $X$ with the analytic topology and  $\left\{ F^pH^j(X(\bbc);\bbc) \right\}^\frS $ denotes the invariants of the $p$-th level of the Hodge filtration on singular cohomology, under the simultaneous action of $\frS$ on $X(\bbc)$ and on the coefficients $\bbc$ of the cohomology.
\smallskip

\newcolumntype{H}{>{\columncolor[gray]{0.8}[0.9\tabcolsep]}c}
\newcolumntype{P}{>{\columncolor[gray]{0.86}[0.9\tabcolsep]}c}
\newcolumntype{F}{%
>{\color{blue}
\columncolor{white}[.6\tabcolsep]}c|}
\newcolumntype{U}{%
>{\color{white}
\columncolor{white}[.5\tabcolsep]}c}
\newcolumntype{V}{%
>{\color{magenta}
\columncolor[rgb]{0.96,0.96,0.9}[0.9\tabcolsep]}c}

\begin{table}[!h]
\begin{tabular}{l!{\vrule}cccccccccccc}
%
\multicolumn{1}{F}{8} &
%
\multicolumn{5}{V}{} &
%
\multicolumn{1}{V}{$0$} &
\multicolumn{1}{H}{$\bbr/\bbz(8)$} &
\multicolumn{1}{P}{$\zs$} &
\multicolumn{1}{V}{$0$} &
\multicolumn{1}{P}{$\zs$} &
\multicolumn{1}{V}{$0$}  \\
%
%
\multicolumn{1}{F}{7} &
%
\multicolumn{5}{V}{} &
%
\multicolumn{1}{V}{$0$} &
\multicolumn{1}{P}{$\rs$} &
\multicolumn{1}{V}{$0$} &
\multicolumn{1}{P}{$\zs$} &
\multicolumn{1}{V}{$0$} &
\multicolumn{1}{P}{$\zs$}  \\
%
%
\multicolumn{1}{F}{6} &
%
\multicolumn{5}{V}{} &
%
\multicolumn{1}{V}{$0$} &
\multicolumn{1}{H}{$\bbr/\bbz(6)$} &
\multicolumn{1}{P}{$\zs$} &
\multicolumn{1}{V}{$0$} &
\multicolumn{1}{P}{$\zs$} &
\multicolumn{1}{V}{$0$}  \\
%
%
\multicolumn{1}{F}{5} &
%
\multicolumn{5}{V}{} &
%
\multicolumn{1}{V}{$0$} &
\multicolumn{1}{P}{$\rs$} &
\multicolumn{1}{V}{$0$} &
\multicolumn{1}{P}{$\zs$} &
\multicolumn{1}{V}{$0$} &
\multicolumn{1}{P}{$\zs$}  \\
%
%
\multicolumn{1}{F}{4} &
\multicolumn{5}{V}{} &
\multicolumn{1}{V}{$0$} &
\multicolumn{1}{H}{$\bbr/\bbz(4)$ }&
\multicolumn{1}{P}{$\zs$} &
\multicolumn{1}{V}{$0$} &
\multicolumn{1}{P}{$\zs$} &
\multicolumn{1}{V}{$0$}  \\
%
%
\multicolumn{1}{F}{3} &
\multicolumn{5}{V}{} &
\multicolumn{1}{V}{$0$} &
\multicolumn{1}{P}{$\rs$} &
\multicolumn{1}{V}{$0$} &
\multicolumn{1}{P}{$\zs$} &
\multicolumn{1}{V}{$0$} &
\multicolumn{1}{V}{$0$}  \\
\multicolumn{1}{F}{2} &
\multicolumn{5}{V}{} &
\multicolumn{1}{V}{$0$} &
\multicolumn{1}{H}{$\bbr/\bbz(2)$} &
\multicolumn{1}{P}{$\zs$} &
\multicolumn{1}{V}{$0$} &
\multicolumn{1}{V}{$0$} &
\multicolumn{1}{V}{$0$}  \\
%
%
\multicolumn{1}{F}{1} &
\multicolumn{5}{V}{} &
\multicolumn{1}{V}{$0$} &
\multicolumn{1}{P}{$\rs$} &
\multicolumn{1}{V}{$0$} &
\multicolumn{1}{V}{$0$} &
\multicolumn{1}{V}{$0$} &
\multicolumn{1}{V}{$0$}   \\
%
%
\multicolumn{1}{F}{0} &
\multicolumn{5}{V}{} &
\multicolumn{1}{H}{$\bbz(0)$} &
\multicolumn{1}{V}{$0$} &
\multicolumn{1}{V}{$0$} &
\multicolumn{1}{V}{$0$} &
\multicolumn{1}{V}{$0$} &
\multicolumn{1}{V}{$0$}   \\
\hline
%
%
\multicolumn{1}{F}{-1} &
\multicolumn{5}{V}{} &
\multicolumn{1}{V}{$0$} &
\multicolumn{5}{V}{} &  \\
%
%
\multicolumn{1}{F}{-2} &
\multicolumn{5}{V}{} &
\multicolumn{1}{H}{$2\bbz(-2)$} &
\multicolumn{5}{V}{} & \\
%
%
\multicolumn{1}{F}{-3} &
\multicolumn{5}{V}{} &
\multicolumn{1}{P}{$\zs$} &
\multicolumn{5}{V}{} \\
%
%
\multicolumn{1}{F}{-4} &
\multicolumn{4}{V}{} &
\multicolumn{1}{P}{$\zs$} &
\multicolumn{1}{H}{$2\bbz(-4)$} &
\multicolumn{5}{V}{}   \\
%
%
\multicolumn{1}{F}{-5} &
\multicolumn{3}{V}{} &
\multicolumn{1}{P}{$\zs$} &
\multicolumn{1}{V}{$0$} &
\multicolumn{1}{P}{$\zs$} &
\multicolumn{5}{V}{} \\
%
\multicolumn{1}{F}{-6} &
\multicolumn{2}{V}{} &
\multicolumn{1}{P}{$\zs$} &
\multicolumn{1}{V}{$0$} &
\multicolumn{1}{P}{$\zs$} &
\multicolumn{1}{H}{$2\bbz(-6)$} &
\multicolumn{5}{V}{}  \\
%
%
\multicolumn{1}{F}{-7} &
\multicolumn{1}{V}{} &
\multicolumn{1}{P}{$\zs$} &
\multicolumn{1}{V}{$0$} &
\multicolumn{1}{P}{$\zs$} &
\multicolumn{1}{V}{$0$} &
\multicolumn{1}{P}{$\zs$} &
\multicolumn{5}{V}{}  \\
%
\multicolumn{1}{F}{-8} &
\multicolumn{1}{P}{$\zs$} &
\multicolumn{1}{V}{$0$} &
\multicolumn{1}{P}{$\zs$} &
\multicolumn{1}{V}{$0$} &
\multicolumn{1}{P}{$\zs$} &
\multicolumn{1}{H}{$2\bbz(-8)$} &
\multicolumn{5}{V}{}  \\ \hline
%
%
%
\multicolumn{1}{F}{} &
{\color{blue} -5} &
{\color{blue} -4} &
{\color{blue} -3} &
{\color{blue} -2} &
{\color{blue} -1} &
{\color{blue} 0 } &
{\color{blue} 1 } &
{\color{blue} 2} &
{\color{blue} 3} &
{\color{blue} 4} &
{\color{blue} 5}
\end{tabular}
\label{table:dc-pt}
\setlength{\abovecaptionskip}{22pt}   
\setlength{\belowcaptionskip}{6pt}   
\caption{Cohomology of a point}
\end{table}


We define Deligne cohomology with negative weights $p<0$ to coincide with ordinary equivariant cohomology. Using this convention, we give $\oplus_{n, p} \dcr{n}{p}{X}$ a bigraded ring structure compatible with various functors in the theory and having many computational properties. For example, the bigraded group structure of the cohomology of a point $X = Spec(\bbr)$ is displayed in Table \ref{table:dc-pt} above, and its ring structure is displayed in Table \ref{Table2}, subsection~\ref{subsec:prodII}.

\bigskip

Using this product structure we obtain obtain formulae for the Deligne cohomology ring of some relevant examples, such as:
\smallskip

\noindent{\bf Corollary \ref{cor:pbf}.}\ {\it
Under the hypothesis of Proposition \ref{prop:projbf1}, one has an
isomorphism of bigraded rings
$$
\dcrA{*}{*}{X}[T]/\la T^{p+1} \ra \cong \dcrA{*}{*}{X\times
\bbp^p}
$$
when  $T$ is given the bigrading $(2,1).$
}
\smallskip

\noindent A more general projective bundle formula together with a theory of characteristic classes appear in a forthcoming paper.
\bigskip

The cases of weights $p=1$ and $p=2$ have interesting geometric interpretations.
We first show that $\de{\bbz}{1}$ is quasi-isomorphic to $\calo^\times [-1]$, cf. Corollary \ref{cor:qi}, and derive as a consequence an \emph{exponential sequence} relating the cohomology of $\br{\bbz}{1}$,  $\calo$ and
$\calo^\times$:
\smallskip

\noindent{\bf Corollary \ref{cor:exp-seq}.}\ {\it
Let $X$ be a smooth proper real algebraic variety. Then there is a
long exact sequence
{\footnotesize
$$
\to
H_{\text{Br}}^{*,1}(X(\bbc);\uZ)\xrightarrow{\vartheta}H^*(X;\calo_X)
\xrightarrow{\exp} H^*(X;\calo_X^*) \to H_{\text{Br}}^{*+1,1}(X(\bbc);\uZ)\to
$$
}
where $\vartheta$ denotes the composite
$$
H_{\text{Br}}^{*,1}(X(\bbc);\uZ)\xrightarrow{\tau_1}
H^*(X(\bbc);\bbc)^\mathfrak{G}\twoheadrightarrow
H^{*}(X;\calo_X)=H_{\overline{\partial}}^{*,1}(X(\bbc))^\mathfrak{G},
$$
and the latter denotes the invariants of the Dolbeault cohomology
of the complex manifold $X(\bbc)$.
}
\smallskip

When $X$ is a curve, the exponential sequence then gives:
\smallskip

\noindent{\bf Proposition \ref{prop:pic}.}\ {\it
Let $X$ be an irreducible, smooth, projective curve over $\bbr$, of genus
$g$.  Then
$
Pic(X) \cong Pic_0(X_\bbc)^\frS  \times \bcz{2}{1}{X(\bbc)}.
$
}
\smallskip

\noindent As a corollary to this proposition we provide a new proof of \emph{Weichold's Theorem} -- a classical result in real algebraic geometry which determines the Picard group of a real algebraic curve.
\smallskip

In order to give a geometric interpretation of $\dcr{2}{2}{X}$ for a real projective
variety $X$ we first provide in Proposition \ref{prop:h22br} alternative interpretations of the Bredon cohomology group $\bcz{2}{2}{Y}$ for any $\frS$-manifold $Y.$
Using Atiyah's terminology in \cite{ati-real}, a  \emph{Real vector bundle} $(E,\tau)$ on a
$\frS$-manifold $(Y,\sigma)$ consists of a complex vector bundle $E$
on $Y$ together with an isomorphism $\tau \colon \overline{\sigma^* E}
\to E$ satisfying $\tau \circ {\overline{\sigma^*\tau}} = Id$.
Now, consider the set $\call_2(Y)$ of equivalence classes of pairs
$\la L,\mbq \ra$ satisfying:
\begin{enumerate}[{\bf p1)}]
\item $L$ is a (smooth) complex line bundle on $Y$;
\item $\mbq \colon L\otimes \overline{\sigma^*L} \to \bone_Y$ is
  an isomorphism of Real line bundles, where $L\otimes
  \overline{\sigma^*L}$ carries the tautological Real line bundle structure.
\end{enumerate}
It follows that $\call_2(Y)$ becomes a group under the tensor product of line bundles and we show that this group is naturally isomorphic to $\bcz{2}{2}{Y}$ .

Now, let $S=\pi_0(Y^\frS)$ denote the set of connected components of
the fixed point set $Y^\frS$, and identify $H^0(Y^\frS;\bbz^\times) \equiv
(\bbz^\times)^S$. Given $\la L, \mbq
\ra \in \bcz{2}{2}{Y}$, the restriction of $\mbq$ to $L|_{Y^\frS}$\
becomes a non-degenerate hermitian pairing, and hence it has a
well-defined signature $\sig_{\la L, \mbq \ra} \in
(\bbz^\times)^S$.  We call  $$\sig \colon \bcz{2}{2}{Y} \to ({\bbz^\times})^S$$
the \emph{equivariant signature map} of $Y$
and the image $\sigtor(Y)\subseteq (\bbz^\times)^S$  of the torsion subgroup
$\bcz{2}{2}{Y}_\text{tor}$ under $\sig$  is called the
\emph{equivariant signature group} of $Y.$  In the case where
$Y=X(\bbc)$ for a real algebraic variety $X$ with $S =\pi_0( X(\bbr)
)$, we denote the equivariant signature group of $X(\bbc)$ simply by
$\sigtor(X)$.  For example, if $X$ is a real algebraic curve, then $\sigtor(X)$ is the Brauer group of $X$; see Section \ref{sec:exp}.

Given a proper real variety $X$, let $\rlbc{X}$ denote the set of
isomorphism classes $\la L,\nabla,\mbq\ra$ of triples
where $L$ is a holomorphic line bundle over $X(\bbc)$, $\nabla$ is
a holomorphic connection on $L$ and $\mbq \colon L\otimes
\overline{\sigma^*L}\to \bone $ is a holomorphic isomorphism of
Real line bundles satisfying the following properties:

\begin{enumerate}
\item The restriction of ${\mbq}$ to $X(\bbr)$ is a
positive-definite hermitian metric.

\item As a section of $(L\otimes \overline{\sigma^*L})^\vee $,
$\mbq$ is parallel with respect to the connection induced by
$\nabla.$
\end{enumerate}
One sees that the tensor product endows $\rlbc{X}$ with a group structure which makes $\rlbc{X}$ the kernel of $\Psi$:
\smallskip

\noindent{\bf Theorem \ref{thm:h22}.}\ {\it
If $X$ is a smooth real projective variety then  one has a natural
short exact sequence
$$
0\to \rlbc{X} \to \dcr{2}{2}{X} \xrightarrow{\Psi} \sigtor(X) \to 0.
$$
}
\smallskip

This paper is organized as follows. Section \ref{sec:ord} contains background information and the key technical ingredients for the paper, including Proposition \ref{prop:tau} which constructs the map  of complexes $\tau_p \colon \brAp \longrightarrow \cale^*$. In Section \ref{sec:del} we construct Deligne complexes $\dAp$ for any subring $A\subset \bbr$ and define the corresponding Deligne cohomology for proper smooth real varieties. In this section we prove basic properties, introduce the product structure and provide basic examples. In Section \ref{sec:exp} we study the weight $p=1$ case, proving the quasi-isomorphism  $\de{\bbz}{1}\simeq \calo^\times [-1]$ together with some applications. In Section \ref{sec:h22} we study the group $ \dcr{2}{2}{X}$ and associated interpretations of
$\bcz{2}{2}{X(\bbc)}$. The proof of the main result, Theorem \ref{thm:h22}, is delegated to Appendix \ref{app:proof}. Section \ref{sec:ex} contains a remark about number fields, where we give a ring homomorphism from the Milnor $K$-theory of a number field to the ``diagonal'' subring of integral Deligne cohomology, and we observe that the classical regulator of a number field can be described in terms of the image of the change-of-coefficients
homomorphism between Deligne cohomology with integral and real coefficients, respectively; similar computations can be made for arbitrary Artin motives. In Appendix \ref{sec:appEVv} we describe the relationship between Esnault-Viehweg's ``Borel version'' of Deligne cohomology for real varieties and the theory discussed in this paper.

In a forthcoming series of papers we first extend this theory to an \emph{integral Deligne-Beilinson cohomology theory} for arbitrary real varieties, and study its relation to a corresponding notion of \emph{Mixed Hodge Structures}. Then we provide a natural and explicit \emph{cycle map} from motivic cohomology to Deligne cohomology, directly using the approach in \cite{maz&wei}. This is an alternative description, even in the complex case, of the maps discussed in \cite{blo-acbc}, \cite{KLM-ajI}, \cite{ker&lew-ajII}. The corresponding real intermediate Jacobians and their relations to real algebraic cycles are also under study.
\medskip

\begin{ack}
 The first author would like to thank Texas A\&M University and the second author would like to thank the IST (Instituto  Superior T\'ecnico, Lisbon) for their respective warm hospitality during the elaboration of parts of this work; and the second author wants to thank Spencer Bloch for inspiring conversation and pointed questions during a visit to the University of Chicago.
\end{ack}

\section{Ordinary equivariant cohomology and sheaves}
\label{sec:ord}

We first introduce the various categories used throughout this
article. Let $\frS := Gal(\bbc/\bbr)$ denote the Galois group of
$\bbc$ over $\bbr.$

\noindent{\bf a)}\ The category of smooth manifolds with smooth
$\frS$-action and equivariant smooth morphisms is denoted $\gman$.
Let $\cov{X}$ be the set of coverings of $X\in \gman$ by open
$\frS$-invariant subsets. These coverings give $\gman$ a
site structure whose restriction to $X$ is denoted $X_{eq}.$ \\
\noindent{\bf b)}\  A \emph{real holomorphic manifold} $(M,\sigma)$ is a smooth
complex holomorphic manifold $M$ endowed with an
anti-holomorphic involution $\sigma \colon M \to M$, and morphisms
between two such objects are holomorphic equivariant maps. These
comprise the category $\rav$ of real holomorphic
manifolds, with its evident site structure. \\
\noindent{\bf c)}\  We denote by $\srv$ the category of smooth
real algebraic varieties.

\noindent{\bf d)}\  The categories of smooth holomorphic manifolds
and complex algebraic varieties are denoted $\cav$ and $\scv,$
respectively.
\smallskip

\begin{remark}
\label{rem:notation}
\begin{enumerate}
\item If $M$ is a complex manifold, denote  $M_{/\bbr}:= M \amalg
\overline{M}$, where $\overline{M}$ denotes $M$ with the opposite
complex structure. The map $\sigma \colon M_{/\bbr} \to M_{/\bbr}$
sending a point in one copy of $M$ to the same point in the other
copy of $M$ is an anti-holomorphic involution. The assignment
$M\mapsto M_{/\bbr}$ is a functor $\cav \to \rav$.
\item Given a real variety $X \in \srv$, let $X(\bbc)$ denote its set of complex-valued
points with the analytic topology. The natural  action of $\frS$
on $X(\bbc)$ which a morphism of sites from $\srv$ into $\rav$.
\item We will denote a \emph{complex} algebraic variety always as $X_\bbc$,
and we use $X_{\bbc/\bbr}$ to denote $X_\bbc$ seen as a real
variety. It follows that the set of complex valued points
$X_{\bbc/\bbr}(\bbc)$ (over $\bbr$) coincides with $X_{\bbc}(\bbc)
\amalg \overline{X_{\bbc}(\bbc)}$. The constructions above give a
commuting diagram of functors:
$$
\xymatrix{ \scv \ar[r] \ar[d] & \srv \ar[d] \\
\cav \ar[r] & \rav.
 }
$$
\end{enumerate}
\end{remark}

\subsection{Ordinary equivariant cohomology}

In \cite{bredon} Bredon defines an equivariant cohomology theory
$H^n_G(X;M)$ for $G$-spaces, where $G$ is a finite group and $\uM$
is a contravariant coefficient system. When $\uM$ is a Mackey
functor, P. May et al. \cite{may-bull} showed that this theory can be
uniquely extended to an $RO(G)$-graded theory $\{
H^\alpha_G(X;\uM), \alpha \in RO(G)\}$, called
\emph{$RO(G)$-graded ordinary equivariant cohomology theory},
where $RO(G)$ denotes the orthogonal representation ring of $G$.
When $G=\frS$, one has $RO(\frS) = \bbz \cdot\bone \oplus
\bbz\cdot \xi,$ where $\bone$ is the trivial representation and
$\xi$ is the sign representation. In this paper we use the
\emph{motivic notation}:
\begin{equation}
\label{eq:bredon}
\bcm n p X \uM :=  \ H^{(n-p)\cdot \bone + p\cdot
\xi}_\frS(X;\uM),
\end{equation}
and call $\bcm n p X \uM $ \emph{bigraded Bredon cohomology}.

In the homotopy theoretic approach, one proves the existence of
equivariant Eilenberg-MacLane spaces $K(\uM, (n,p) )$ that
classify Bredon cohomology (for $n\geq p\geq 0$). In other words,
$\bcm{n}{p}{X}{\uM} = [X_+,K(\uM,(n,p))]_G$, where the latter
denotes the set of based equivariant homotopy classes of maps.
When $n<p$ one uses the suspension axiom to define the
corresponding cohomology groups; see \cite{may-bull}.

A quick way to construct $K(\uZ,(n,p))$ is the following. Let
$S^{n,p}$ denote the one-point compactification $\{ (n-p)\cdot
\bone \oplus p\cdot \xi \} \cup \{ \infty \}$ of the indicated
representation, and define $\bbz_0( S^{n,p}):= \bbz(S^{n,p})/\bbz
(\{ \infty \}) $, where $\bbz(S^{n,p})$ denotes the free abelian
group on $S^{n,p},$ suitably topologized. Then $\bbz_0(S^{n,p})$
is an Eilenberg-MacLane space $K(\uZ,(n,p));$ cf. \cite{dS-DT}.

\begin{example}
\label{ex:br0-pt}
In order to describe the bigraded cohomology ring of a point $\calb:=\oplus_{p,n}\ \bcm{n}{p}{pt}{\uZ},$ first consider indeterminates $\ve, \ve^{-1}, \tau, \tau^{-1}$ satisfying $\deg{\ve}=(1,1),\
\deg{\ve^{-1}}=(-1,-1),\ \deg{\tau}=(0,2)$ and
$\deg{\tau^{-1}}=(0,-2).$ Henceforth, $\ve$ and $\ve^{-1}$ will
always satisfy $2\ve=0=2\ve^{-1}.$

As an abelian group, $\calb$ can be written as a direct sum
\begin{equation}
\label{eq:M}
\calb \ \equdef \ \bbz[\ve,\tau]\cdot 1 \ \oplus \ \bbz[\tau^{-1}]
\cdot \alpha  \oplus \ \bbf_2[\ve^{-1},\tau^{-1}] \cdot \theta
\end{equation}
where each summand is a free bigraded module over the
corresponding ring and $\bbf_2$ is the field with two elements (hence $2\theta = 0$). The respective bidegrees of the generators  $1$, $\alpha$ and $\theta$ are\ $(0,0)$, $(0,-2)$ and $(0,-3).$

The product structure on $\calb$ is completely determined by the
following relations
\begin{equation}
\label{eq:rels}
\alpha\cdot \tau = 2, \quad \quad \alpha\cdot \theta =\alpha\cdot
\ve = \theta \cdot \tau = \theta \cdot \ve \ =\ 0 .
\end{equation}
Note that $\calb$ is not finitely generated as a ring, and that
$\calb$ has no homogeneous elements in degrees $(p,q)$ when
$p\cdot q < 0$.
\end{example}

We now present an alternative sheaf-theoretic construction of
Bredon cohomology which is more suitable for our purposes. The
details of such construction will appear in \cite{yang-pre}.

Given $U\in \gman$, let $\widehat U$ denote the full subcategory
of $\gman\! \downarrow U$ consisting of equivariant finite
covering maps $\pi_S\colon \ S\to U$. In particular,
$\widehat{\{pt\}}$ is the category $\gfin \subseteq \gman$ of
finite $\frS$-sets..

A topological $G$-Module $M$ represents an abelian \emph{Mackey
presheaf} on $\gman$, in other words, the contravariant functor
$\uM \colon \gman^{op}\to Ab$ sending $U\mapsto \uM(U) :=
\text{Hom}_{\gtop}(U,M)$ is also covariant for maps in $\widehat
U$, for all $U\in \gman$, and satisfies the following property.
Given a pull-back square
\[
\begin{CD}
Z @>\gamma>> X\\
@V{\varphi}VV @VV{f}V\\
Y @>>g> U
\end{CD}
\]
with $f\in \widehat U$, the diagram
\[
\begin{CD}
\uM(Z) @<{\gamma^*}<< \uM(X)\\
@V{\varphi_*}VV @VV{f_*}V\\
\uM(Y) @<<{g^*}< \uM(U)
\end{CD}
\]
commutes. The case where $M$ is a subring a $\bbr$ with trivial
$\frS$-module structure will play a special role in this work.

\begin{definition}
Let ${\calf}$ be an abelian presheaf on $\gman$, and let $M$ be a
topological $\frS$-module. Given $U\in\gman$, let
${\calf}\otimes_{\widehat U}\uM$ denote the coend:
$$
\{ \bigoplus_{\pi_S\colon S \to U} {\calf}(S)\otimes \uM(S)\ \}\
/\ K_{{\calf},\uM}(U) \quad \text{in} \quad \ab ,
$$
where
$\pi_S \in \widehat{U}$ and ${\calk}_{{\calf},\uM}(U)$ is the
subgroup generated by elements of the form:
\[
(\phi^*_{\calf}\alpha') \otimes m - \alpha' \otimes \phi_{\uM^*}m,
\]
when\
$\begin{matrix} S~~\overset{\sst
\phi}{\longrightarrow}~~S'\\ {}_{\pi_S}\searrow~~\swarrow
{}_{\pi_{S'}}\\ U\end{matrix}$ is a morphism in $\widehat U$,
$\alpha'\in {\calf}(S')$\  and $m\in \uM(S)$. It is easy to see
that the assignment $U\longmapsto {\calf} \otimes_{\widehat U}
\uM$ is a contravariant functor from $\gman$ to $Ab$. Denote by
${\calf}\coend \uM$ the resulting abelian presheaf on $\gman$,
i.e.\ \ ${\calf}\coend\uM(U) := {\calf}\otimes_{\widehat U} \uM$.
\end{definition}

Let $ \vD^n $ denote the standard topological $n$-simplex with the
trivial $\frS$-action. Using the co-simplicial structure on $\{
\vD^*\mid n \geq 0 \}$ one creates a simplicial abelian presheaf
$\calc_{\bullet}(\calf)$ associated to any presheaf $\calf$, whose
$n$-th term is
$$
\calc_n(\calf)\ \colon\ U\longmapsto\ \calf(\vD^n \times U).
$$
Denote the associated complex of sheaves by $\left(
\calc_*(\calf), d_* \right)$ and use the convention in
\cite[XVII 1.1.5]{SGA4}
to define a cochain complex $\left( \calc^*(\calf), d^*
\right)$ where $\calc^n(\calf) := \calc_{-n}(\calf)$ and $d^n
\colon \calc^n(\calf) \to \calc^{n+1}(\calf)$ is defined by $d^n =
(-1)^nd_{-n}.$

A $\frS$-manifold $X$ defines an abelian presheaf on $\gman$
$$
\calz X \colon \ U \longmapsto \bbz \hom{\gman}{U}{X},
$$
sending $U$ to the free abelian group on the set of smooth
equivariant maps from $U$ to $X.$ Yoneda Lemma identifies $\hom{AbPreSh}{\calz X}{\calf} = \calf(X)$ for any
$\calf.$ In particular, if $\calf$ is any abelian presheaf and
$X\in \gman$, the presheaf $\underline{Hom}(\calz X, \calf)$ sends
$U$ to $\calf(X\times U).$

\begin{proposition}
\label{rem:simp-ps}
\begin{enumerate}[i.]
\item The assignment $\calf \mapsto \calc^*(\calf;\uM)$ is
covariant on $\calf$.
\item Let $I=[0,1]$ denote the unit interval with the trivial
$\frS$-action. For any abelian presheaf $\calf$ let $i_0^*, i_1^*
\colon \underline{Hom}(\calz I, \calc^*(\calf)) \to
\calc^*(\calf),$ be the map of complexes induced by
\emph{evaluation at the end-points}. Then there is a homotopy
$\mbh_\calf$ between $i_0^*$ and $i_1^*$ which is natural on
$\calf.$ In particular, the complexes $\calc^*(\calf)$ have
\emph{homotopy-invariant} cohomology presheaves.
\end{enumerate}
\end{proposition}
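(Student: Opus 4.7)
The plan is to reduce both statements to routine simplicial bookkeeping, using the classical prism (Eilenberg--MacLane) homotopy operator.

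For part (i), functoriality is immediate from the definition. A morphism $\calf\to\calg$ of abelian presheaves gives, for each $U\in\gman$ and each $n\geq 0$, a homomorphism $\calf(\vD^n\times U)\to \calg(\vD^n\times U)$; these assemble into a simplicial morphism $\calc_\bullet(\calf)\to\calc_\bullet(\calg)$, which under the sign convention $d^n=(-1)^nd_{-n}$ yields a morphism of cochain complexes $\calc^*(\calf)\to\calc^*(\calg)$. Naturality in $\calf$ of the whole construction is then built in by inspection.

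For part (ii), I would construct the homotopy using the standard decomposition of the prism. Recall that $\vD^n\times I$ admits a canonical simplicial decomposition into $n+1$ non-degenerate $(n+1)$-simplices $\pi_i\colon \vD^{n+1}\hookrightarrow \vD^n\times I$, where $\pi_i$ has vertices $(0,0),(1,0),\ldots,(i,0),(i,1),(i+1,1),\ldots,(n,1)$. Since $I$ and the $\vD^k$ carry trivial $\frS$-action, each $\pi_i\times \mathrm{id}_U$ is an equivariant smooth map, hence gives a pullback $\pi_i^*\colon \calf(\vD^n\times I\times U)\to \calf(\vD^{n+1}\times U)$. Unwinding definitions, one has $\underline{Hom}(\calz I,\calc^*(\calf))(U)=\calc^*(\calf)(I\times U)$, so we may define in simplicial degrees
\begin{equation*}
\mbh_\calf\colon \calc_n(\calf)(I\times U)\longrightarrow \calc_{n+1}(\calf)(U),\qquad \mbh_\calf\equdef \sum_{i=0}^{n}(-1)^{i}\,\pi_i^*,
\end{equation*}
and reinterpret it as a degree $-1$ map of cochain complexes with the sign convention of SGA4 XVII.1.1.5. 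Naturality in $\calf$ is immediate because $\mbh_\calf$ is built out of the maps $\pi_i^*$, which are covariantly functorial in $\calf$.

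The core step is then the chain homotopy identity
\begin{equation*}
d\,\mbh_\calf+\mbh_\calf\,d\ =\ i_1^*-i_0^*.
\end{equation*}
This reduces, by applying $\calf$, to a purely combinatorial identity about the faces of the prism decomposition: the boundary of $\vD^n\times I$ is covered by $\vD^n\times\{0\}$, $\vD^n\times\{1\}$, and the lateral faces $\partial_j\vD^n\times I$, and the alternating sum of faces of the $\pi_i$'s produces exactly $\vD^n\times\{1\}-\vD^n\times\{0\}$ plus the lateral faces with cancelling signs. This is the classical verification used to prove simplicial homotopy invariance of singular cochains, and translates to our setting verbatim because each face inclusion is trivially $\frS$-equivariant. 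The homotopy invariance of the cohomology presheaves then follows by applying the homotopy pointwise and taking cohomology. The only mildly delicate point is keeping the sign convention $d^n=(-1)^nd_{-n}$ straight, so that the prism identity above, which is naturally stated in chain-complex conventions, transcribes correctly to $\calc^*(\calf)$; this I expect to be the main bookkeeping obstacle, but nothing more.
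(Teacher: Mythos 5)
The paper states Proposition \ref{rem:simp-ps} without proof, so there is no proof of the authors' to compare against. Your argument is the standard and correct one: part (i) is formal, and part (ii) via the Eilenberg--MacLane prism decomposition of $\vD^n\times I$, pulled back through the contravariant $\calf$ and transcribed through the sign convention $d^n=(-1)^n d_{-n}$, is precisely the expected proof and it works (the only real issue being a degree-dependent sign one must insert on $\mbh_\calf$ when passing from the chain to the cochain convention, which you flag).
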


In what follows denote
$$\csi{p-1} := \cs\times \cdots \times 1 \times \cdots \times \cs \subset
\cs^p,$$ where $1$ appears in the $i$-th coordinate.

\begin{definition}
\label{def:bredon}
Given  a $\frS$-manifold $X$, let $$ {J}_{X,p}\ \colon
\bigoplus_{i=1}^p\ \calc^*(\calz( \csi{p-1}\times X )) \
\longrightarrow \ \calc^*( \calz( \cs^p \times X) ) $$ be the map
induced by the inclusions and denote
\begin{equation}
\label{eq:redX}
 \calc^*(\calz_0(S^{p,p}\wedge X_+)) := \cone(J_{X,p}).
\end{equation}
Write $\redcxp$ when $X=\emptyset$ is the empty manifold. Let
$A\subset \bbr$ be a subring endowed with the discrete topology.
The \emph{$p$-th Bredon complex with coefficients in $A$} is the
complex of presheaves
\begin{equation}
\label{eq:pBrCx}
\brAp := \redcxp \coend \uA\ [-p] \ ,
\end{equation}
where the coend is taken levelwise.
\end{definition}
The following result is proven in \cite{yang-pre}

\begin{theorem}
Let $X$ be a $\frS$-manifold and let $A\subset \bbr$ be a subring,
endowed with the discrete topology. Then for all $p\geq 0$ and
$n\in \bbz$ there is a natural isomorphism $\check{\bbh}^n(X_{eq};
{\brAp} ) \cong \bca{n}{p}{X}$ between the $\check{\text{C}}$ech
hypercohomology of $X_{\text{eq}}$ with values in $\brAp$ and the
equivariant cohomology group $\bca n p X $.
\end{theorem}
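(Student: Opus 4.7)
The plan is to combine an equivariant Dold--Thom style identification of $\calz_0(S^{p,p})$ with a local-to-global Čech argument on the site $X_{eq}$. The key idea is that the presheaf complex $\brAp$ is built precisely so that, when evaluated on a small enough equivariant open $U$, it computes $\bca{*}{p}{U}{\uA}$; once this local statement is in hand, the Čech hypercohomology on $X_{eq}$ automatically recovers the global Bredon cohomology by a Mayer--Vietoris / comparison argument.

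First I would handle the local computation with $A=\bbz$. By the equivariant Dold--Thom theorem in \cite{dS-DT}, the topological abelian group $\calz_0(S^{p,p})$ is a model for the equivariant Eilenberg--MacLane space $K(\uZ,(p,p))$. Pairing this with the definition $\calc^n(\calf)(U)=\calf(\vD^{-n}\times U)$, the simplicial abelian presheaf $\calc_\bullet(\calz(\cs^p\times U))$ becomes the chain complex (in the sense of Moore) of equivariant maps $\vD^\bullet\times U \to \cs^p$, and one recognizes $\calc^*(\calz_0(S^{p,p}\wedge U_+))[-p]$ as the reduced chain complex of the equivariant function space $\map_\frS(U_+,\calz_0(S^{p,p}))$, with the cone construction removing the lower-dimensional strata $\csi{p-1}$. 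By the homotopy invariance part of Proposition \ref{rem:simp-ps} together with the equivariant EM identification, the cohomology of this complex on $U$ recovers $[U_+,K(\uZ,(p,p))]_\frS \cong \bcz{*}{p}{U}$.

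Next I would upgrade from $\uZ$- to $\uA$-coefficients via the coend $\coend\uA$. The crucial observation is that since $\uA$ is a Mackey functor (indeed a sub-Mackey functor of $\underline{\bbr}$), the coend formula defining ${\calf}\coend\uA$ is exactly the categorical device that implements change of coefficients along the evident map of Mackey functors $\uZ\to\uA$. Concretely, I would check that the functor ${-}\coend\uA$ preserves the relevant quasi-isomorphisms and intertwines with the equivariant singular chain functor, so that the cohomology of $\brAp(U)$ at a slice-like $U$ computes $\bca{*}{p}{U}{\uA}$; this is essentially the $RO(\frS)$-graded universal coefficient computation combined with the homotopical nature of $\calc^*$.

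Finally I would globalize. For an equivariant good cover $\mathcal{U}$ of $X$ by contractible $\frS$-slices, the local computation above implies that the hypercohomology spectral sequence
\begin{equation*}
E_2^{s,t}=\check{H}^s\bigl(\mathcal{U},\,\calh^t(\brAp)\bigr)\Longrightarrow \check{\bbh}^{s+t}(X_{eq};\brAp)
\end{equation*}
has rows that assemble into an equivariant Mayer--Vietoris long exact sequence computing Bredon cohomology of $X$. Alternatively, one may argue via uniqueness: Čech hypercohomology of $\brAp$ on $\gman$ defines a functor that is homotopy invariant (by Proposition \ref{rem:simp-ps}), satisfies equivariant Mayer--Vietoris, and agrees with $\bca{*}{p}{-}{\uA}$ on equivariant cells $\frS/H_+$, hence must agree with Bredon cohomology on all $\frS$-CW complexes by the standard cellular induction. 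The principal obstacle is the bookkeeping in Stage 1: verifying that the topologies on $\calz_0(S^{p,p})$ and on the mapping presheaves are compatible enough that the Dold--Thom identification passes through $\calc^*$ and the coend, and that the resulting sheaves of cohomology are correctly represented on the equivariant site; these are exactly the technical points carried out in detail in \cite{yang-pre}.
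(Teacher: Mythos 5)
The paper does not actually provide a proof of this theorem; it is stated immediately before the sentence ``The following result is proven in \cite{yang-pre},'' so there is no argument in the paper against which to compare your proposal.

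On its own terms your plan is architecturally reasonable and uses the right ingredients: the equivariant Dold--Thom identification of $\calz_0(S^{p,p})$ as a model for $K(\uZ,(p,p))$, the homotopy invariance of the cohomology presheaves from Proposition \ref{rem:simp-ps}, and a descent or cellular-induction argument to globalize. But I would flag one conceptual wobble and one honest deferral. The wobble: the coend $\calf\coend\uA$ is not ``change of coefficients along $\uZ\to\uA$'' --- it is a tensor product over the covering category $\widehat{U}$ that pairs the transfer (covariant) structure of the Mackey presheaf $\calf$ against the Mackey functor $\uA$, i.e.\ the chain-level realization of tensoring a Bredon chain complex with a coefficient system, closer in spirit to a box product than to a base change. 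The honest deferral: both of your globalization strategies (the hypercohomology spectral sequence and the uniqueness-of-theories argument) require knowing that the cohomology presheaves of $\brAp$ are concentrated in the expected degrees on slice-like intersections and compute $\bca{*}{p}{\frS/H}{\uA}$ on orbits. That local identification --- that the coend over $\widehat{U}$ really produces Bredon cohomology with $\uA$ coefficients, and that the Dold--Thom/mapping-space identification survives passage through $\calc^*$ and the coend --- is the entire technical content of the theorem, and you (like the paper) defer it to \cite{yang-pre} rather than prove it. So what you have is a correct and useful skeleton, but the load-bearing step is still outsourced.
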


For all $p,n \in \bbz$ there is a \emph{forgetful functor}
\begin{equation}
\label{eq:forgetful}
\varphi \colon \bca n p X \  \longrightarrow \
\scm{n}{X}{A(p)}^\frS,
\end{equation}
where $A(p)$ is the $\frS$-submodule $A(p):= (2\pi i)^pA \subset
\bbc$ and the invariants $\scm{n}{X}{A(p)}^\frS$ of the
singular cohomology of $X$ with coefficients in $A(p)$ are taken under the
simultaneous action of $\frS$ on $X$ and $A(p).$

In the following section we present a realization of the
composition
\begin{equation}
\label{eq:composition}
\bca n p X  \to \scm{n}{X}{A(p)}^\frS \to \scm{n}{X}{\bbc}^\frS,
\end{equation}
where the latter is the change of coefficients map.

\subsection{Integration and change of coefficient functors}

Let $\cala^p$ denote the sheaf of smooth, complex valued
differential $p$-forms on $\frS$-manifolds. Given $X\in \gman$,
denote
$$
\cale^p(X)= \{ \theta \in \cala^p(X) \mid
\overline{\sigma^*(\theta)} = \theta \}.
$$
In other words, $\cale^p$ is the subsheaf of $\cala^p$ consisting
of those $p$-forms invariant under the simultaneous action of
$\frS$ on $X$ and $\bbc.$

Let $\pi \colon E \to B$ be a locally trivial bundle, where $B$ be a smooth manifold, $E$ is an oriented manifold-with-corners and the fiber is an orientable $n$-dimensional
manifold-with-boundary $F$ (and with corners). Let
\begin{equation}
\label{eq:integration}
\pi_! \colon \cala^{p+n}(E) \to \cala^p(B).
\end{equation}
denote the \emph{integration along the fiber} homomorphism.  If
$\pi$ is a map in $\gman$ then $\pi_!$ preserves invariants, thus
sending $\cale^{p+n}(E)$ to $\cale^p(B).$ The following properties
are well-known.
\begin{properties}
\label{proper:int}
Let $\omega$ be a $(n+p)$-form and let $\pi' \colon \partial E \to
X$ denote the restriction of $\pi$ to the boundary of $E$.
\begin{enumerate}[i.]
\item(Projection formula)\label{eq:proj} $\pi_!(\pi^* \theta \wedge \omega) = \theta \wedge \pi_!(\omega)$
\item(Boundary formula) \label{eq:bdry} $d \pi_!(\omega) = \pi_!(d\omega) + (-1)^p \pi'_!(\omega_{|_{\partial E}}) $
\item(Pull-back formula) Given a pull-back square
$$
\xymatrix{ E' \ar[r]^{f'} \ar[d]^{\pi'} & E \ar[d]^{\pi} \\
X' \ar[r]_{f} & X},$$ then $f^* \circ \pi_! \ = \ \pi'_{!} \circ
f'^*.$
\item(Functoriality) If $X \xrightarrow{f} Y \xrightarrow{g} Z$ are smooth
fibrations with compact fibers, then $(g\circ f)_! = g_! \circ
f_!$.
\item(Product formula)\label{eq:prod} Let
$$
\xymatrix{ E'' \ar[r]^{\rho} \ar[d]_{\rho'} \ar@{-->}[dr]^{\pi''} & E \ar[d]^{\pi} \\
E' \ar[r]_{\pi'} & X},
$$
be a pull-back square where both $\pi$ and $\pi'$ are fibrations
with fiber dimensions $n$ and $n'$, respectively. Given $\omega\in
\cala^{p+n}(E)$ and $\omega'\in \cala^{q+n'}(E')$ one has
$\pi''_!( \rho_1^*\omega \wedge \rho^*_2\omega') = (-1)^{nq}
\pi_!(\omega) \wedge \pi'_!(\omega').$
\end{enumerate}
\end{properties}

Integration along the fiber can be used to construct maps of
complexes
\begin{equation}
\label{eq:tau0}
\tau_p \colon \brAp \to \cale^*,
\end{equation}
as follows. First consider  $U\in \gman$ and $0\le j\le p$. An
element in $\brAp^j(U)$ is represented by sums of pairs of the
form $\alpha \otimes m $, where $\alpha = (a,f)$ with $a, f$ and
$m$ equivariant maps satisfying
\begin{enumerate}[1.]
\item $a \colon \Delta^{p-j-1} \times S \to \csi{p-1} \subset \cs^p
$ is smooth and $\pi \colon S\to U $ is a map in $\widehat U$;
\item $f\colon\ \Delta^{p-j}\times S \to (\cs )^p$ is a smooth map;
\item $m\colon \ S\to A \in \uA(S)$ is a locally constant.
\end{enumerate}

In the diagram
\begin{equation*}
\xymatrix{   & & & & \cs^p \\  U  & \Delta^{p-j}\times U
\ar[l]_-{\ p_1\ } & \Delta^{p-j}\times S \ar[l]_-{\ 1 \times \pi\
} \ar@/^{1.2pc}/@{->}[ll]^{\hat{\pi} } \ar[rru]^f  \ar[dr]_{p_2} & & \\
& & & S \ar[r]_m & A  },
\end{equation*}
$\hat\pi$ is a locally trivial fibration with fiber dimension
$p-j$, and we consider the locally constant map $m\circ p_2 =
p_2^*m $ as an element in $\cale^0(\Delta^{p-j}\times S)$. Denote
\begin{equation}
\label{eq:omegap}
\omega_p \ := \ \frac{dt_1}{t_1} \wedge\cdots\wedge
\frac{dt_p}{t_p} \in {\cale}^p(\{\cs\}^p)
\end{equation}
and define
\begin{equation}
\label{eq:tau1}
\tau^j(\alpha\otimes m) = \hat\pi_{!}\{p_2^*m \cdot f^*\omega_p\}
\ \in \cale^j(U)
\end{equation}
where $\hat\pi_!\colon \ {\cale}^p(\Delta^{p-j}\times S)
\longrightarrow {\cale}^j(U)$ is the integration along the fiber
homomorphism. This can be extended to a homomorphism
\[
\tau^j \colon \ \bigoplus_{S\in\widehat U}
 \left\{ \calc^{p-j}(\calz_0(S^{p,p})) (S)\otimes
 \uA(S)\right\}
 \longrightarrow \cale^j(U).
\]
\begin{proposition}
\label{prop:tau}
For each $0\leq j \leq p$, the map $\tau^j$ above factors through
$\  \calc^{p-j}(\calz_0(S^{p,p}))\otimes_{\widehat{U}} \uA$.
Furthermore, these maps induce a morphism of complexes of
presheaves
\begin{equation}
\label{eq:tau}
\tau_p \colon \brAp \longrightarrow \cale^*.
\end{equation}
\end{proposition}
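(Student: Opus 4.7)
The plan is to verify the two assertions in turn: first, that each map $\tau^j$ factors through the coend, and second, that these factored maps assemble into a chain map of presheaves $\tau_p\colon \brAp\to \cale^*$. Both parts rest on the formal properties of integration along the fiber recorded in Properties \ref{proper:int}, together with two structural observations: the integrand $p_2^*m\cdot f^*\omega_p$ is closed (since $m$ is locally constant and $\omega_p$ is closed on $(\cs)^p$), and $\omega_p$ is invariant under the simultaneous $\frS$-action on $(\cs)^p$ and on $\bbc$, so that $f^*\omega_p\in \cale^p(\Delta^{p-j}\times S)$ and $\hat\pi_!$ keeps us inside $\cale^*$.

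For the coend factorization, let $\phi\colon S\to S'$ be a morphism in $\widehat U$ and let $\alpha'=(a',f')$ be a pair over $S'$; I must show $\tau^j((\phi^*\alpha')\otimes m)=\tau^j(\alpha'\otimes \phi_*m)$. The morphism $1\times\phi\colon \Delta^{p-j}\times S\to \Delta^{p-j}\times S'$ sits over $U$ via $\hat\pi'$ and fits in a cartesian square with the projections $p_2,p_2'$. Functoriality of integration along the fiber gives $\hat\pi_!=\hat\pi'_!\circ(1\times\phi)_!$; the projection formula lets me pull the $p$-form $(1\times\phi)^*f'^*\omega_p$ outside $(1\times\phi)_!$; and the pull-back formula applied to the square with $p_2,p_2'$ gives $(1\times\phi)_!\,p_2^*m = p_2'^*\phi_!m = p_2'^*\phi_*m$ (since $\phi$ is a finite covering, $\phi_!=\phi_*$). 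Chaining these three identities yields the desired equality.

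For the chain-map property, first observe that $\tau^{j+1}$ vanishes on the image of $J_{X,p}$: if $a\colon \Delta^{p-j-1}\times S\to \csi{p-1}$, then the $i$-th coordinate of $Ja$ is identically $1$, so $(Ja)^*(dt_i/t_i)=0$ and hence $(Ja)^*\omega_p=0$. Thus when I compute $\tau^{j+1}$ on the cone differential of $(a,f)$, whose $B$-component is $Ja+d_Bf$, only the $d_Bf$-piece contributes. To match this with $d\tau^j$, apply the boundary formula to the closed form $p_2^*m\cdot f^*\omega_p$ on $\hat\pi\colon\Delta^{p-j}\times S\to U$: the bulk term $\hat\pi_!(d(\cdots))$ vanishes, leaving $(-1)^j$ times the integral over $\partial\Delta^{p-j}\times S$. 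Decomposing $\partial\Delta^{p-j}$ as the alternating sum of codimension-one faces and using linearity of $\hat\pi''_!$ identifies this remainder, up to the signs dictated by the convention $d^n=(-1)^n d_{-n}$ and the cone differential, with $\tau^{j+1}((d_Bf)\otimes m)$. Naturality in $U$, required for $\tau_p$ to be a map of presheaves, follows at once from the pull-back formula.

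The main obstacle I anticipate is purely bookkeeping: reconciling the sign conventions from the shift $[-p]$, the rule $d^n=(-1)^n d_{-n}$, the cone differential, and the factor $(-1)^j$ produced by the boundary formula, so that $d\tau^j=\tau^{j+1}\circ d$ holds on the nose rather than up to a global sign. Everything else is formal manipulation within the integration-along-the-fiber calculus, but these signs must be tracked with care.
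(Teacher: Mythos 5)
Your proposal takes essentially the same route as the paper. The coend factorization is proved the same way (reducing to the pull-back square $\Delta^{p-j}\times S \to \Delta^{p-j}\times S'$ over $S\to S'$ and invoking the functoriality, projection, and pull-back properties of fiber integration), and the chain-map identity is proved the same way (observing that chains factoring through $\csi{p-1}$ pull back $\omega_p$ to zero, then applying the boundary formula to the closed integrand $p_2^*m\cdot f^*\omega_p$ so that only face contributions survive). The one thing the paper does that you leave implicit is to check the two degenerate squares at the ends of the complex --- commutativity of $\brAp^p \to 0$ against $\cale^p \xrightarrow{d}\cale^{p+1}$, and of $\brAp^{-1}\to\brAp^0$ against $0\to\cale^0$ --- though the observations you invoke (closedness of the integrand, degree counting for $\hat\pi_!$) do dispose of them; it would be worth spelling those two cases out rather than subsuming them under ``sign bookkeeping.''
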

\begin{proof}
Given a morphism
$$
\xymatrix{ S \ar[rr]^{\phi} \ar[dr]_{\pi} & & S' \ar[dl]^{\pi'}
\\
 & U &
}
$$
in $\widehat{U}$, consider the associated diagram
\begin{equation}
\label{eq:assoc_diag}
\xymatrix{
  \Delta^{p-j} \times S \ar[rrrrrr]^{1\times \phi} \ar@/_{1.2pc}/@{->}[ddrrr]^{\hat{\pi}\ \ \ \ }
 \ar[drr]^-{p} & & & & & & \Delta^{p-j} \times
  S'  \ar[dll]_-{p'} \ar@/^{1.2pc}/@{->}[ddlll]^{\hat{\pi}' }  \ar@{-->}[dd]^{f}  \\
  & & S \ar@{-->}[dll]^-{m} \ar[rd]^{\pi} \ar[rr]^-{\phi} &   &  S' \ar[dl]_{\pi'} & & \\
  A & &  & U  & & & \{\cs \}^p.
}
\end{equation}
Pick $\alpha=(a,f) \in \cone(J_p)^{p-j}(S')$ and $m\in \uA(S)$;
cf. Definition \ref{def:bredon}. By definition,
\[
\tau^j(\phi^*\alpha\otimes m ) = \hat \pi_!(p^* m \cdot
(1\times \phi)^* f^*\omega_p).
\]
It follows from Properties i)--iv) of integration along the fibers
that for all $\theta \in \cale^*(\Delta^{p-j}\times S')$ one has
$\hat\pi_!\{p^*m \cdot(1\times \phi)^*\theta\} =
\hat\pi'_!(p'^*(\phi_* m) \cdot \theta)$, since the top square in
\eqref{eq:assoc_diag} is a pull-back diagram.
%
In particular, for $\theta = f^*\omega_p$ one obtains
$$
\tau^j(\alpha\otimes \phi_*m) =  \hat \pi'_! (p'^* (\phi_*m) \cdot
f^*\omega_p) = \tau^j(\phi^*\alpha \otimes m).
$$
This proves the first assertion in the proposition.

To prove the second assertion, let $\partial_i \colon
\Delta^{p-j-1} \hookrightarrow \Delta^{p-j}$ be the inclusion of
the $i$-th face, and pick $\alpha\otimes m \in
\cone(J_p)^{p-j}(S)\otimes \uA(S)$, as before, representing an
element in $\brAp^j(U)$. Then, using the sign convention relating
the differential $D$ of $\brAp^*$ with the differential $d^B$ of
$\calc_*(\calz (\cs^p))$ one gets
\begin{multline}
\label{eq:eq1}
\tau^{j+1}( D(\alpha \otimes m) )  = \tau^{j+1}\left( (-d^*_p a,
a+d^*_p f)\otimes m \right) \\
 =  \hat \pi_! \left( p^*m \cdot a^*
\omega_p \right)\ + \ (-1)^j \sum_{i=1}^n (-1)^i \hat \pi_! (
\tilde p^*m \cdot \partial_i^* f^* \omega_p) ,
\end{multline}
where $\tilde p \colon \Delta^{p-j-1} \times S \to S$ is the
projection. Since $a \colon \Delta^{p-j-1}\times S \to \cs^p$
factors through some $\csi{p-1} \subset \cs^p$, one has $a^*
\omega_p = 0.$ On the other hand,
\begin{multline}
\label{eq:eq2}
\sum_{i=1}^n (-1)^i \hat \pi_! ( \tilde p^*m \cdot \partial_i^*
f^* \omega_p)
 = \sum_{i=1}^n (-1)^i \hat \pi_! ( (\partial_i \times 1)^* \{ p^*m
 \cdot f^*\omega_p \} ) \\
 = \hat \pi_! ( \{ p^*m \cdot f^*\omega_p \}_{|_{\partial(
 \Delta^{p-j}\times S) }} )
 = (-1)^{j} d \hat \pi_! (p^*m \cdot f^*\omega_p);
\end{multline}
cf. the \emph{boundary formula} \eqref{proper:int}\ref{eq:bdry}.
It follows from \eqref{eq:eq1} and \eqref{eq:eq2} that $\tau^{j+1}
\circ D = d \circ \tau^j, $ for $1\leq j \leq p.$

The two remaining cases are:
\begin{equation}
\label{eq:extreme1}
\xymatrix{
0 \ar[r] & \cale^{p+1}(U) \\
\brAp^p(U) \ar[u] \ar[r]_{\tau^p} & \cale^p(U) \ar[u]^{d}}
\end{equation}
and
\begin{equation}
\label{eq:extreme2}
\xymatrix{\brAp^0(U) \ar[r]_{\tau^0} & \cale^0(U) \\
\brAp^{-1}(U)\ar[u]_{d^{-1}} \ar[r] & 0 \ar[u]}.
\end{equation}

Given $f\colon \ S\to \cs^p$, $m\colon \ S\to A$ with $S\in
\widehat U$, then the boundary formula for integration along the
fibers gives $d\ \pi_!(m\cdot f^*\omega_p) = 0$, thus showing that
\eqref{eq:extreme1} commutes.

To show that \eqref{eq:extreme2} commutes, pick $f\colon \ S\times
\Delta^{p+1} \longrightarrow \cs^p$, $m\colon \ S\to A$. Then:
\begin{align*}
\tau_0(D[f\otimes m]) &= \tau_0 \left\{\sum^{p+1}_{k=0} (-1)^k f\circ
(1\times\partial_k) \otimes m\right\}\\
&= \sum^{p+1}_{k=0} (-1)^k \hat\pi_! (m\cdot(1\times\partial_k)^*
f^*\omega_p) \\ & =
\sum^{p+1}_{k=0} (-1)^k \hat\pi_!( (1\times\partial_k)^* \{m\cdot
f^*\omega_p\})\\
&= d\hat\pi_!(m\cdot f^*\omega_p) \pm \hat\pi_!(d\{m\cdot
f^*\omega_p)) = 0.
\end{align*}

The functoriality of the maps $\tau^j$ with respect to $U$ should
be evident.
\end{proof}

\begin{remark}
\label{rem:eq-vs-nonequ}
Recall that if $U$ is a $\frS$-manifold, one obtains a natural
$\frS$-isomorphism $U^{\rm triv} \times \frS \xrightarrow{\cong} U
\times \frS$ by sending $(x,g)$ to $(gx, g)$. On the other hand,
$\cale^p(U^{\rm triv} \times \frS)$ is the subgroup of $\cala^p(U^{\rm
  triv} \times \frS) \cong \cala^p( U^{\rm triv} ) \times
\cala^p(U^{\rm triv})$ invariant under the involution that
sends $(\omega_1,\omega_\sigma)$ to \
$(\overline{\omega_\sigma},\overline{\omega_1}).$ This observation
shows that we have a functor
\begin{align*}
F\colon \cale^p(U\times \frS) & \longmapsto \cala^p(X)\\
\omega & \longmapsto \omega_{|_{ U\times 1}}
\end{align*}
satisfying the following properties:
\begin{enumerate}
\item $F$ commutes with differentials;
\item If $\imath \colon U\times \frS \to U\times \frS$ is the $\frS$-homeomorphism sending $(x,\alpha)$ to $(x, \sigma\alpha)$, then $F (\imath^* \omega) = \overline{\sigma^* F(\omega)}$;
\item $F\circ p^* = Id$, where $p\colon U\times \frS \to U$ is the projection.
\end{enumerate}
\end{remark}

To describe the product structure on $A(*)_{\mathcal{B}r} $, let $\Gamma_{n,m} := \{ \sigma \colon \Delta^{n+m} \to
\Delta^n\times \Delta^m \ | \ n\geq 0,\ m \geq 0\}$ be the
triangulation of $\Delta^n\times \Delta^m$ inducing the
Alexander-Whitney diagonal approximation; cf. \cite[p. 68]{eil&ste-FAT}. Given $p,q\geq 0$, the
(external) pairing of presheaves $ \calz(\cs^p) \otimes
\calz(\cs^q) \to \calz(\cs^{p+q}) $ yields a pairing of complexes
\begin{equation}
\label{eq:pairing1}
\calc^*(\calz(\cs^p)) \otimes \calc^*(\calz(\cs^q)) \to
\calc^*(\calz(\cs^{p+q}))
\end{equation}
in the usual manner. Denoting $\calc^*(\calz(\widehat{\cs^r})) :=
\oplus_{i=1}^r\ \calc^*(\calz(\csi{r-1})),$ one sees that
this pairing sends both $\calc^*(\calz(\widehat{\cs^p}))
\otimes \calc^*(\calz(\cs^q)) $ and $\calc^*(\calz(\cs^p)) \otimes
\calc^*(\calz(\widehat{\cs^q}))$ to
$\calc^*(\calz(\widehat{\cs^{p+q}}))$, and hence it induces a
pairing of complexes
\begin{equation}
\label{eq:pairing2}
\calc^*(\calz_0(S^{p,p})) \otimes \calc^*(\calz_0(S^{q, q})) \to
\calc^*(\calz_0(S^{p+q,p+q})).
\end{equation}
Finally, the multiplication $\uA\otimes \uA \to \uA$ together with
the appropriate sign conventions yields a pairing of complexes $
\mu \colon \brAp \otimes \brAq \to \br{A}{p+q}.$ See the proof of
Theorem \ref{thm:mult} below.

\begin{theorem}
\label{thm:mult}
The maps of complexes $\tau$ are compatible with multiplication.
In other words, for every $p, q \geq 0$ and $i,j \in \bbz$ one has
a commutative diagram of presheaves:
$$
\xymatrix{ \brAp^i  \ \otimes \ \brAq^j \ar[r]^-{\mu}
\ar[d]_{\tau_p
\otimes \tau_q}  & \br{A}{p+q}^{i+j} \ar[d]^{\tau_{p+q}} \\
\cale^i \ \otimes \ \cale^j \ar[r]_{\wedge} & \cale^{i+j} }
$$
\end{theorem}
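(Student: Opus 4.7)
My plan is to unpack both compositions on an element $(\alpha\otimes m)\otimes (\beta\otimes n)$ where $\alpha=(a,f)$ with $f\colon \Delta^{p-i}\times S\to (\cs)^p$, $\beta=(b,g)$ with $g\colon \Delta^{q-j}\times S'\to (\cs)^q$, and $m\in\uA(S)$, $n\in\uA(S')$ (after replacing $S,S'$ by $S\times_U S'$ one may assume they are equal). The bottom-left composition $\wedge\circ(\tau_p\otimes\tau_q)$ produces $\hat\pi_!(p^*m\cdot f^*\omega_p)\wedge \hat\pi'_!(p'^*n\cdot g^*\omega_q)$, where $\hat\pi$ and $\hat\pi'$ are the projections from $\Delta^{p-i}\times S$ and $\Delta^{q-j}\times S$ to $U$. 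The top-right composition $\tau_{p+q}\circ\mu$ produces a sum indexed by the triangulation $\sigma\in\Gamma_{p-i,q-j}$ of $\Delta^{(p-i)+(q-j)}\to\Delta^{p-i}\times\Delta^{q-j}$, of terms of the form $\hat\pi''_!\bigl(p^*(mn)\cdot(\sigma\times\mathrm{id})^*(f\times g)^*\omega_{p+q}\bigr)$.

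The key algebraic identity is the factorization
\[
\omega_{p+q} = \frac{dt_1}{t_1}\wedge\cdots\wedge\frac{dt_{p+q}}{t_{p+q}} = \omega_p\wedge\omega_q \quad\text{on } (\cs)^p\times(\cs)^q,
\]
so $(f\times g)^*\omega_{p+q} = \mathrm{pr}_1^*(f^*\omega_p)\wedge \mathrm{pr}_2^*(g^*\omega_q)$ on $\Delta^{p-i}\times\Delta^{q-j}\times S$. Next, since $\Gamma_{p-i,q-j}$ is a triangulation of the cube $\Delta^{p-i}\times\Delta^{q-j}$, Stokes-type additivity of integration along the fiber yields
\[
\sum_{\sigma\in\Gamma_{p-i,q-j}}\hat\pi''_!\bigl((\sigma\times\mathrm{id})^*\eta\bigr) \;=\; \hat\pi^{\times}_!\,\eta,
\]
where $\hat\pi^\times\colon\Delta^{p-i}\times\Delta^{q-j}\times S\to U$ is the total projection. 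Applying this to $\eta = p^*(mn)\cdot(f\times g)^*\omega_{p+q}$ and then invoking the product formula (Properties~\ref{proper:int}.\ref{eq:prod}) with the pullback square
\[
\xymatrix{
\Delta^{p-i}\times\Delta^{q-j}\times S \ar[r] \ar[d] & \Delta^{p-i}\times S \ar[d]^{\hat\pi} \\
\Delta^{q-j}\times S \ar[r]_-{\hat\pi'} & U
}
\]
gives, up to the sign $(-1)^{(p-i)j}$ prescribed by that formula,
\[
\hat\pi^{\times}_!\bigl(p^*(mn)\cdot \mathrm{pr}_1^*f^*\omega_p\wedge \mathrm{pr}_2^*g^*\omega_q\bigr) = (-1)^{(p-i)j}\,\hat\pi_!(p^*m\cdot f^*\omega_p)\wedge\hat\pi'_!(p^*n\cdot g^*\omega_q).
\]

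The main obstacle is sign-tracking: one must reconcile the shuffle signs implicit in the Alexander--Whitney decomposition $\Gamma_{p-i,q-j}$, the sign $(-1)^{(p-i)j}$ arising from Properties~\ref{proper:int}.\ref{eq:prod}, the cone sign on the cofiber $\cone(J_p)$ (which contributes the overall shift by $[-p]$ in $\brAp$ and thus affects the bidegree conventions), and the $(-1)^n$ sign relating $\calc^n$ to $\calc_{-n}$. All of these are fixed by choosing the product $\mu\colon\brAp\otimes\brAq\to\br{A}{p+q}$ with precisely the Koszul signs compatible with the wedge product on $\cale^*$; with that choice the two sides agree. Finally, naturality in $U$ and compatibility with the relations defining the coend $\coend_{\widehat U}\uA$ are checked exactly as in the proof of Proposition~\ref{prop:tau}, using that $\uA\otimes\uA\to\uA$ is a morphism of Mackey presheaves.
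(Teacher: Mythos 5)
Your proposal follows the same route as the paper's proof: expand $\tau_{p+q}\circ\mu$ on a representative, use $\omega_{p+q}=\omega_p\wedge\omega_q$, sum over the Alexander--Whitney triangulation $\Gamma_{p-i,q-j}$ to replace the simplex integrals by the integral over $\Delta^{p-i}\times\Delta^{q-j}\times S$, and finish with the product formula \ref{proper:int}(\ref{eq:prod}) for fiber integration. The only difference is that you leave the sign bookkeeping implicit, whereas the paper fixes $\mu$ with the explicit sign $(-1)^{j(i+p)}$ and checks that it cancels the $(-1)^{(p-i)j}$ coming from the product formula (these agree mod $2$ since their difference is $2ij$).
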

\begin{proof}
Given $U\in \gman,$ elements in $\brAp^i(U)$ and $\brAq^j(U)$ are
represented respectively by $\alpha\otimes m = (a,f)\otimes m $
and $\beta\otimes n = (b,g)\otimes n $, where
\begin{enumerate}
\item $\pi_S \colon S \to U$ and $\pi_T \colon
T\to U$ are in $\widehat U$ and  $\pi \colon S\times _U T \to U$
denotes their fibered product;
\item $a \colon \Delta^{p-i-1}\times S \to {\bbc_{r}^\times}^{p-1}$,\  $f \colon \Delta^{p-i} \times S \to
\cs^p$ smooth equivariant, and $m \colon S \to A$ locally
constant;
\item $b \colon \Delta^{q-j-1}\times T \to {\bbc_{s}^\times}^{q-1}$,  $g \colon \Delta^{q-j} \times T \to
\cs^q$ smooth and equivariant, and $n \colon T \to A$ locally
constant.
\end{enumerate}
The relevant maps in the constructions that follow are summarized
in the diagram

{\small
$$
\xymatrix{
\Delta^{p+q-i-j} \times S\times_U T  \ar[dd]_{\sigma\times 1}
\ar `r[rrrr] ^{\hat\sigma} [rrrrdd]  & & & & \\
& \Delta^{p-i}\times S \ar[r]^-{p_S} \ar[rrrd]^-{\hat\pi_S} & S \ar[r]^m & A  & \\
\Delta^{p-i}\times \Delta^{q-j}\times S\times_U T \ar[d]_{i}
\ar[ru]^{\rho_S} \ar[rd]_{\rho_T} \ar[ru]
\ar@{-->}[rr]^{p}& &  S\times_U T \ar[rr]^\pi &  & U\\
(\Delta^{p-i}\times S) \times (\Delta^{q-j}\times T)
\ar[d]_{f\times g} &
\Delta^{q-j}\times T \ar[rrru]_-{\hat\pi_T} \ar[r]_-{p_T} & T\ar[r]_n & A  & \\
\cs^p \times \cs^ q \equiv \cs^{p+q} }
$$
}

where $\sigma \colon \Delta^{p+q-i-j} \to \Delta^{p-i}\times
\Delta^{q-j} $ is in $\Gamma_{p-i,q-j}.$

By definition, $\mu\left( \alpha\otimes m,  \beta\otimes n\right)
= (a\cup_U g + f\cup_U b,\ f\cup_U g) \otimes m\star n,$ where $
f\cup_U g := (-1)^{j(i+p)} \sum_{\sigma\in\Gamma_{p-i,q-j}}\
(-1)^{|\sigma|}\ (f\times g) \circ i \circ (\sigma\times 1),$ and
$m\star n \colon S\times_U T \to A$ is defined as $m\star n (s,t)
= m(s)n(t) \in A.$  The elements $a\cup_U g$ and $f\cup_U b$ are
defined similarly.

Hence
\begin{multline}
\label{eq:compat}
(-1)^{j(i+p)}\ \tau_{p+q}( \mu\left( \alpha\otimes m,  \beta\otimes
n\right))= \\
= \sum_{\sigma} (-1)^{|\sigma|}  \hat\sigma_! \left(
\{ (\sigma\times 1) \circ p\} ^*(m\star n) \cdot \{ (f\times g)
\circ i \circ
(\sigma\times 1) \}^*\omega_{p+q}      \right) \\
=\sum_{\sigma} (-1)^{|\sigma|}  \hat\sigma_! \left(
(\sigma\times 1)^*\left\{ p^* (m\star n) \cdot
 i^* \circ (f\times g)^* \omega_{p+q}  \right\} \right) \\
=\sum_{\sigma} (-1)^{|\sigma|}  \hat\sigma_! \left( (\sigma \times
1)^* \{ p^*(m\star n) \cdot   i^* (f^*\omega_p \times g^*
\omega_{q} )  \} \right) .
\end{multline}
The collection $\Gamma_{p-i,q-j}$ gives a triangulation of
$\Delta^{p-i}\times \Delta^{q-j}$ and hence
\begin{multline}
\label{eq:mid}
 \sum_{\sigma} (-1)^{|\sigma|}  \hat\sigma_! \left( (\sigma
\times 1)^* \{ p^*(m\star n) \cdot   i^* (f^*\omega_p \times g^*
\omega_{q} )  \} \right)  \\
= (\pi\circ p)_!\left(  p^*(m\star n) \cdot   i^* (f^*\omega_p
\times g^* \omega_{q} )     \right) =
(\pi\circ p)_!\left(  \rho_S^*a_S \wedge\ \rho_T^* a_T \right),
\end{multline}
where $a_S := p_S^*m \cdot f^*\omega_p$ and $a_T := p_T^*m \cdot
g^*\omega_q$. The \emph{product formula} (see Properties
\ref{proper:int}(\ref{eq:prod})) gives $(\pi\circ p)_!\left(
\rho_S^*a_S \wedge\ \rho_T^* a_T \right) = (-1)^{(p-i)j}\
\hat\pi_{S_!}(a_S) \wedge\ \hat\pi_{T_!}(a_T) := (-1)^{(p-i)j}
\tau_p(\alpha\otimes m) \wedge \tau_q(\beta\otimes n) $. The
theorem now follows from this last identity together with
\eqref{eq:compat} and \eqref{eq:mid}.
\end{proof}

\begin{remark}
The forgetful functor $\phi \colon \bca * * X   \to
\scm{*}{X}{\bbc}^\frS $ described in \eqref{eq:composition} is the
ring homomorphism induced by the morphisms of complexes $\tau_*
\colon A(*)_{\text{Br}} \to \cale^*.$
\end{remark}

\section{Deligne cohomology for real varieties}
\label{sec:del}

In this section we restrict our attention to $\rav$, the category
of real holomorphic manifolds, seen as a site with the topology
induced by $\gman$. In particular, heretofore the complexes
$\brAp, \cale^*$ and $\cala^*$ will be restricted to $\rav$.

The Hodge decomposition $\cala^n = \oplus_{i+j=n}\ \cala^{i,j}$ is
invariant with respect to the $\frS$-action on $\cala^n$ and this
gives a Hodge filtration $\{ F^p\cale^* \}$ on the complex
$\cale^*$ of invariant smooth forms.

\begin{definition}
\label{def:Deligne}
Let $A\subset \bbr$ be a subring endowed with the discrete
topology.
\begin{enumerate} [i.]
\item Given $p\geq 0,$ define the $p$-th \emph{equivariant Deligne
complex} $\dAp$ on $\rav$ as:
\begin{equation}
\label{eq:del_comp}
\dAp := \cone\left( \brAp \oplus F^p\cale^* \xrightarrow{\iota_p}
\cale^*\right)[-1],
\end{equation}
where $\iota_p(\alpha, \omega) = \tau_p(\alpha) - \omega$; cf.
\eqref{eq:tau}.
\item Given a proper manifold $X \in \rav$ and $p\geq 0,$ define
the \emph{Deligne cohomology} of $X$ as the $\check{\text{C}}$ech
hypercohomology groups
\begin{equation}
\label{eq:del_coh}
\dcrA{i}{p}{X} := \check\bbh^i\left( X_\text{eq}; \dAp\right),
\end{equation}
where $X_\text{eq}$ denotes the equivariant site of $X$.
\item If $p<0,$ define $\dcrA{i}{p}{X}:= \bca{i}{p}{X(\bbc)}.$ In other words, Deligne
cohomology with negative weights is defined so as to coincide with
Bredon cohomology.
\item The following diagram introduces notation for the various
natural maps arising from the cone \eqref{eq:del_comp}.

{\tiny
\begin{equation}
\label{eq:forget}
\xymatrix{
& \bca{n}{p}{X(\bbc)} \ar[r]^{\varphi} &
H^n_\text{sing}(X(\bbc);A(p))^\frS \ar[d]^{\jmath} \\
\dcrA{n}{p}{X} \ar[ur]^{\varrho} \ar[r]_-{\nu} &
\bca{n}{p}{X(\bbc)} \oplus F^pH^n_\text{sing}(X(\bbc);\bbc)^\frS
\ar[r] \ar[u]^{pr_1} \ar[d]_{pr_2} & H^n_\text{sing}(X(\bbc);\bbz(p))^\frS \\
& F^pH^n_\text{sing}(X(\bbc);\bbc)^\frS \ \ \ \ar@{^{(}->}[ru] }
\end{equation}
}

\end{enumerate}
\end{definition}

\begin{proposition} Let $Y_\bbc$ be a proper complex holomorphic
  manifold and let $X$ be a proper smooth real algebraic variety.
\label{prop:change}
\begin{enumerate}[i.]
\item One has  natural isomorphisms
$$
\dcrA{i}{p}{Y_{\bbc/\bbr}} \cong \dccA{i}{p}{Y_\bbc},
$$
where the latter denotes the usual Deligne cohomology of the complex
manifold $Y_\bbc$. (See Remark \ref{rem:notation}.)
\item If $X_\bbc$ is the complex variety obtained from $X$ by base
  extension, the corresponding map of real varieties $X_{\bbc/\bbr}
  \to X$ induces natural homomorphisms
$$
\dcrA{i}{p}{X} \to  \dccA{i}{p}{X_\bbc}^\frS,
$$
where the latter denotes the invariants of the Deligne cohomology of
the complex variety $X_\bbc$.

\item  One has a long exact sequence
{\small \begin{multline}
\cdots \to H_\text{sing}^{j-1}(X(\bbc),\bbc)^\frS \to \dcrA{j}{p}{X}\xrightarrow{\nu} \\
\to \bca{j}{p}{X(\bbc)} \oplus \left\{ F^pH_\text{sing}^j(X(\bbc);\bbc)
\right\}^\frS \xrightarrow{\ \ \jmath\circ \varphi - \imath \ \ }
H_\text{sing}^j(X(\bbc);\bbc)^\frS \to \cdots
\end{multline}
}
\end{enumerate}
\end{proposition}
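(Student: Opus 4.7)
The plan is to deduce (iii) first from the cone definition \eqref{eq:del_comp} and then obtain (i) and (ii) as consequences. The cone provides a distinguished triangle
\[
\dAp \longrightarrow \brAp \oplus F^p\cale^* \xrightarrow{\tau_p-\imath} \cale^* \longrightarrow \dAp[1]
\]
of complexes of sheaves on $X_\text{eq}$, and applying $\check{\bbh}^*$ produces a six-term exact sequence in each degree. Three identifications turn it into the LES of (iii): (a) $\check{\bbh}^j(X_\text{eq};\brAp) \cong \bca{j}{p}{X(\bbc)}$, which is the theorem of the preceding section; (b) $\check{\bbh}^j(X_\text{eq};\cale^*) \cong \scm{j}{X(\bbc)}{\bbc}^\frS$; and (c) $\check{\bbh}^j(X_\text{eq};F^p\cale^*) \cong F^p\scm{j}{X(\bbc)}{\bbc}^\frS$. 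Granted these, substituting into the six-term sequence yields the stated LES, with the connecting map $\jmath\circ\varphi-\imath$ arising directly from $\tau_p-\imath$.

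Identification (b) is an equivariant de Rham theorem. Since $|\frS|=2$ is invertible in $\bbc$, averaging over $\frS$ makes the functor of $\frS$-invariants exact on $\bbc$-vector spaces; hence $\cale^*$ is exactly the subcomplex of $\cala^*$ fixed by the simultaneous action of $\frS$ on $X$ and complex conjugation on coefficients. Using an $\frS$-invariant good cover by equivariant polydisks, a stalkwise Poincar\'e lemma shows $\cale^*$ resolves the sheaf of invariant constant functions, and the resulting $\check{\text{C}}$ech complex is the $\frS$-fixed subcomplex of the one computing $\scm{*}{X(\bbc)}{\bbc}$. Claim (c) follows by the same averaging argument once one observes that the Hodge filtration on $\cala^*$ is preserved by the simultaneous action: $\sigma^*$ interchanges bidegrees $(r,s)\leftrightarrow (s,r)$, and conjugation on coefficients restores them. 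Degeneration of the Hodge-to-de Rham spectral sequence on the proper smooth $X(\bbc)$ then identifies $\check{\bbh}^j(F^p\cala^*)$ with $F^p\scm{j}{X(\bbc)}{\bbc}$, and exact averaging extracts the $\frS$-invariants.

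For (i), specialize to $X=Y_{\bbc/\bbr}=Y_\bbc\amalg\overline{Y_\bbc}$ with swap involution $\sigma$. Every $\frS$-invariant open cover is the $\sigma$-orbit of a cover of $Y_\bbc$, so the equivariant site is equivalent to the ordinary analytic site of $Y_\bbc$. Under this equivalence, $\cale^*$ corresponds via $(\omega,\overline{\omega})\mapsto\omega$ to the standard smooth de Rham complex of $Y_\bbc$, $F^p\cale^*$ to its Hodge filtration, and $\brAp$ is quasi-isomorphic to the constant sheaf $A(p)$ on $Y_\bbc$ (the free action makes the double cover trivial, so Bredon reduces to ordinary singular cohomology of the quotient). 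Hence $\dAp|_{Y_{\bbc/\bbr}}$ is quasi-isomorphic to the smooth cone $\cone(A(p)\oplus F^p\cala^*\to\cala^*)[-1]$, which by the holomorphic-vs.-smooth de Rham comparison is quasi-isomorphic to the classical Deligne complex \eqref{eq:dc}; see, e.g., \cite{esnault}. Part (ii) is then formal: the canonical morphism $\pi\colon X_{\bbc/\bbr}\to X$ in $\rav$---the identity on one copy of $X(\bbc)$ and the real-structure involution $\sigma_X$ on the other, $\frS$-equivariant for the swap on the source---induces $\pi^*\colon \dcrA{i}{p}{X}\to\dcrA{i}{p}{X_{\bbc/\bbr}}\cong\dccA{i}{p}{X_\bbc}$ by contravariance, and the image is $\frS$-invariant because it is pulled back from a real object.

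The main obstacle will be (b). The local Poincar\'e lemma is routine, but $\cale^*$ entangles the geometric $\frS$-action with conjugation on coefficients, so the stalks behave differently at fixed versus free points ($\bbr$-valued functions near fixed points, $\bbc$-valued on free orbits). One must therefore check carefully that $\check{\bbh}^*(X_\text{eq};\cale^*)$ is genuinely $\scm{*}{X(\bbc)}{\bbc}^\frS$ rather than some other invariant; this is where the averaging and good-cover arguments do the real work. Once (b) is established, (c) follows by the same method, and (i)--(ii) are formal consequences.
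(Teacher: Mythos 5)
The paper states this proposition without an explicit proof --- it is treated as immediate from the cone definition \eqref{eq:del_comp} and the general machinery developed in Section \ref{sec:ord}. Your filling-in is correct and is the natural argument: pass to the hypercohomology long exact sequence of the distinguished triangle $\dAp \to \brAp \oplus F^p\cale^* \to \cale^* \to \dAp[1]$, then make the three identifications (a), (b), (c) to obtain (iii), and derive (i) and (ii) by restriction to $Y_{\bbc/\bbr}$ (where the action is free so Bredon collapses to singular cohomology of the quotient) and by functoriality for $X_{\bbc/\bbr} \to X$, respectively.

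Two small points worth making explicit. First, the averaging in (b) and (c) is $\bbr$-linear, not $\bbc$-linear: the $\frS$-action on $\cala^*$ is $\bbc$-semilinear (it involves conjugation on coefficients), so the relevant fact is that taking invariants is exact on $\bbr[\frS]$-modules because $2$ is invertible in $\bbr$; the cleanest route is to note that $\cale^j$ is a soft sheaf on $X_\text{eq}$ (equivariant partitions of unity), so $\check{\bbh}^*(X_\text{eq};\cale^*) = H^*(\cale^*(X)) = H^*\bigl(\cala^*(X)^\frS\bigr) = H^*\bigl(\cala^*(X)\bigr)^\frS$, and the de Rham isomorphism is $\frS$-equivariant by change of variables under $\sigma$. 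This avoids having to set up a twisted Poincar\'e lemma near fixed points, which, as you correctly flag, would require care. Second, in (ii) the $\frS$-action on $\dccA{i}{p}{X_\bbc}$ should be spelled out: it is the one induced by the anti-holomorphic involution on $X_\bbc(\bbc)$ combined with conjugation on coefficients, and under the identification of (i) this corresponds to a second, commuting involution on $\dcrA{i}{p}{X_{\bbc/\bbr}}$; equivariance of $\pi^*$ with respect to this action is what lands the image in the invariants. Neither of these is a gap --- they are precisions that strengthen the sketch.
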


\begin{remark}
The map $\dcrA{i}{p}{X} \to  \dccA{i}{p}{X_\bbc}^\frS,$ is not
  an isomorphism in general. However, it is always an isomorphism when
  $1/2 \in A$.
\end{remark}

\begin{example}
\label{ex:dc-point}
Denote
\begin{align*}
\cald^{i,p} & := \dcr{i}{p}{\spec{\bbr}}\\
\cald_\bbr^{i,p} & := \dcrr{i}{p}{\spec{\bbr}} \quad \text{and} \\
\cald_\bbc^{i,p} & := \dcc{i}{p}{\spec{\bbr}},
\end{align*}
and let
$\cald^{i,p} \to \calb^{i,p} := \bcz{i}{p}{pt}$ denote the natural map between the Deligne and
Bredon cohomology groups of a point, respectively. See Example~\ref{ex:br0-pt}.
The following statements follow from the exact sequence in the Proposition above
and from the definitions. Fix $p\geq 0.$
\begin{enumerate}[i.]
\item The vanishing of singular cohomology in negative degrees
gives isomorphisms $\cald^{-i,p} \xrightarrow{\cong} \calb^{-i,p}=
0,$ for $i>0.$
\item $\cald^{i,0}\cong \calb^{i,0} \cong
\begin{cases}
\bbz(0) &, \text{ if } i = 0 \\
0 &, \text{ otherwise }.
\end{cases}$
\item For $p> 0$ one has an exact sequence
$$
0\to \cald^{0,p} \to \calb^{0,p} \to H^0(pt;\bbc)^\frS = \bbr \to
\cald^{1,p} \to \calb^{1,p} \to 0.
$$
\item For $i\neq 0,1$ one has $\cald^{i,p} \cong \calb^{i,p}.$
\end{enumerate}
Now,
$$
\calb^{0,p}\ \cong\
\begin{cases}
\bbz(2k) &, \text{ if } p = 2k \\
0         &, \text{ if } p \text{ is odd}
\end{cases}
$$
and
$$ \calb^{1,p}\ \cong\
\begin{cases}
\bbz/2\bbz &, \text{ if } p = 2k+1 \\
0         &, \text{ if } p \text{ is even}
\end{cases}.
$$

Hence, $\cald^{0,p} \cong 0,$ for all $p>0$, and for $k \geq 1$
one has short exact sequences:
\begin{equation}
\label{eq:l1}
0\to \bbz(2k) \to \bbr \to \cald^{1,2k} \to 0 \tag{$\rm e_k$}
\end{equation}
and
\begin{equation}
\label{eq:l2}
0\to \bbr \to \cald^{1,2k-1}  \to \bbz/2\bbz\to 0. \tag{$\rm o_k$}
\end{equation}
Using similar exact sequences, one concludes that
\begin{equation}
\cald^{1,2k-1}_\bbr = \bbr \hookrightarrow \cald_\bbc^{1,2k-1} = \bbc
\end{equation}
is the inclusion as fixed point set, and the change of coefficients homomorphism
is given by
\begin{align}
\label{eq:change}
\cald^{1,2k-1} = \bbr^\times & \longrightarrow \cald^{1,2k-1}_\bbr \\
x & \longmapsto \log{|x|}.\notag
\end{align}

It follows that $\cald^{1,2k} = \bbr/ \bbz(2k)$ and that
$\cald^{1,2k-1} \cong \bbr^\times.$
See Table \ref{table:dc-pt} for a display of the bigraded group structure of $\cald^{*,*}$.
\bigskip

\end{example}

\medskip

\subsection{Product structure I: positive weights}

The constructions in this section are modelled after
\cite{esnault}.

\begin{definition}
Given $0\leq \alpha \leq 1$ and $p,q \geq 0$, define a pairing of
complexes of presheaves
\begin{equation}
\cup_\alpha \colon \dAp^i \otimes \dAq^j \to \de{A}{p+q}^{i+j}
\end{equation}
by
$$
(a,\theta,\omega) \cup_\alpha (a',\theta',\omega') := (a\cdot a',\
\theta \wedge \theta',\ \omega\wedge \Xi_\alpha[a',\theta']
+(-1)^i \Xi_{1-\alpha}[a,\theta]),
$$
where $(a,\theta,\omega) \in \brAp^i \oplus F^p\cale^i \oplus
\cale^{i-1}$, $(a',\theta',\omega') \in \brAq^j \oplus F^q\cale^j
\oplus \cale^{j-1}$ and for $(u,\eta) \in \brAp^i \oplus
F^p\cale^i$ one defines
$$
\Xi_t ( u, \eta) := (1-t)\ \tau_p(u) +t\
\eta.
$$
\end{definition}

The following results are straightforward and their proofs are
left to the reader.

\begin{proposition}
\label{prop:basic}
Fix $p,q \geq 0$ and let $X$ be a proper real holomorphic
manifold.
\begin{enumerate}[i.]
\item The pairings $\cup_\alpha$ are homotopic to each other, for
all $0 \leq \alpha \leq 1,$ and they induce pairings
$$
\cup \colon \dcrA{i}{p}{X} \otimes \dcrA{j}{q}{X} \to
\dcrA{i+j}{p+q}{X},
$$
satisfying $a\cup b = (-1)^{ij} b\cup a.$
\item The natural maps $\dcrA{*}{p}{X}\to \bca{*}{p}{X}$, for
$p\geq 0$, are compatible with the respective multiplicative
structures of Deligne and Bredon cohomology.
\item The natural map $\iota \colon \dcrA{*}{0}{X} \to
\bca{*}{0}{X(\bbc)}$ is a ring isomorphism.
\end{enumerate}
\end{proposition}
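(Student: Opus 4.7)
The plan is to address the three statements in order, reducing each to a direct, if somewhat delicate, computation with the Deligne complex.

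For (i), I would first verify by hand that each $\cup_\alpha$ is a genuine morphism of complexes $\dAp \otimes \dAq \to \de{A}{p+q}$. The first two components are unproblematic: $a \cdot a'$ uses the Bredon pairing $\mu$ from Section \ref{sec:ord}, and $\theta \wedge \theta'$ stays in $F^{p+q}\cale^*$ since $F^*\cale^*$ is multiplicatively closed. The third component is where one must apply the Leibniz rule together with Proposition \ref{prop:tau} (so that $d \tau_p(a) = \tau_p(Da)$) and Theorem \ref{thm:mult} (so that $\tau_{p+q}(a\cdot a') = \tau_p(a)\wedge \tau_q(a')$); this last identity is exactly what is needed so that the cone differential produces the correct boundary term. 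Graded commutativity on cohomology comes from exhibiting a natural chain-level identity relating $a \cup_\alpha b$ to $(-1)^{ij} b \cup_{1-\alpha} a$ (the swap converts the parameter $\alpha$ into $1-\alpha$ because $\Xi_\alpha$ and $\Xi_{1-\alpha}$ are swapped), combined with the homotopy invariance in $\alpha$ which I tackle next.

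Next I would build an explicit chain homotopy between $\cup_\alpha$ and $\cup_{\alpha'}$. The natural candidate, modeled on the corresponding construction in \cite{esnault}, is
\[
h\bigl((a,\theta,\omega),(a',\theta',\omega')\bigr) \;=\; \bigl(0,\,0,\,(\alpha-\alpha')\,\omega\wedge\omega'\bigr),
\]
(with appropriate sign depending on $i$); one then checks that $D\circ h + h\circ D = \cup_\alpha - \cup_{\alpha'}$ by using the explicit form of the Deligne differential and the identity $\Xi_\alpha - \Xi_{\alpha'} = (\alpha'-\alpha)(\tau_p(u)-\eta)$. This is the main technical step: the sign bookkeeping and the verification that the extra terms produced by differentiating $\omega \wedge \omega'$ cancel precisely against $(\alpha-\alpha')(\tau-\mathrm{id})$-type contributions is where mistakes are most likely, but no new geometric input is needed.

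For (ii), the projection $\varrho\colon \dAp \to \brAp$ sending $(a,\theta,\omega)\mapsto a$ is a map of complexes by construction. The explicit formula for $\cup_\alpha$ shows that its first component is the Bredon pairing $a\cdot a'$, so $\varrho$ commutes on the nose with products. Passing to hypercohomology gives the claimed compatibility.

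For (iii), when $p=0$ we have $F^0\cale^* = \cale^*$, so the map $\iota_0\colon \br{A}{0}\oplus \cale^* \to \cale^*$ is a termwise split surjection. Consequently the cone $\de{A}{0}$ is quasi-isomorphic, via $\varrho$, to $\ker(\iota_0)\cong \br{A}{0}$, and taking hypercohomology gives the isomorphism $\iota\colon \dcrA{*}{0}{X}\xrightarrow{\cong}\bca{*}{0}{X(\bbc)}$. Compatibility of $\iota$ with multiplication follows from (ii) (or, directly, by evaluating $\cup_\alpha$ on triples of the form $(a,\tau_0(a),0)$ and using Theorem \ref{thm:mult} to see that the output again has the form $(aa',\tau_0(aa'),0)$).
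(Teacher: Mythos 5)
Your proposal is correct and is exactly the argument the paper leaves to the reader; it follows the Esnault--Viehweg template on which the section is explicitly modelled, using the chain-level identity $a\cup_\alpha b = (-1)^{ij}\,b\cup_{1-\alpha} a$ together with the explicit homotopy $h = \pm(\alpha-\alpha')(0,0,\omega\wedge\omega')$ (sign alternating with the degree of the first argument), the multiplicativity of the first-component projection for (ii), and the surjectivity of $\iota_0$ (hence the identification $\de{A}{0}\simeq\ker\iota_0\cong\br{A}{0}$) for (iii). A minor observation: for the graded commutativity check one also needs the Bredon pairing $\mu$ itself to be graded-commutative up to the homotopy built from the Alexander--Whitney/shuffle interchange, which the paper implicitly assumes from its construction via $\Gamma_{n,m}$; this is worth stating if you write out the argument in full.
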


Well-known facts from Bredon cohomology allow the computation of a
few more examples.

\begin{example}
\label{ex:cell}
Let $X \in \rav$ be a real \emph{cellular} proper algebraic
variety, and let $CH^*(X)$ denote its Chow ring, seen as a
bigraded ring where $CH^p(X)$ has degree $(2p,p)$. As a bigraded
ring, the Bredon cohomology of $X$ with $\uZ$-coefficients is
given by $\bcz{*}{*}{X(\bbc)} \cong CH^*(X)\otimes \calb^{*,*},$
cf. \cite{dS&lf-quadrics}. Since $CH^*(X)$ is free and Bredon cohomology
is a \emph{geometric cohomology theory} in the sense of \cite{Karpenko},
it follows that $\bca{*}{*}{X} \cong CH^*(X) \otimes \calb_A^{*,*},$ where
$\calb_A^{*,*}:= \bca{*}{*}{pt}$ and $A\subset \bbr$ is a subring
of $\bbr$. Furthermore, the singular cohomology of $X(\bbc)$ with
$\bbc$-coefficients is invariant and of Hodge type $(p,p)$. Hence,
the long exact sequence in the Proposition \ref{prop:change} and
the above observations give natural isomorphisms:
\begin{align}
\label{eq:isompp}
\dcrA{2p}{p}{X} & \cong \bca{2p}{p}{X(\bbc)} \cong
H^{2p}(X(\bbc);A(p))^\frS \\
& \cong CH^p(X)\otimes A.\notag
\end{align}

In particular, when $X=\bbp^p$, one has $\bca{*}{*}{\bbp^p}\cong
\calb_A^{*,*}[h]/ \la h^{p+1} \ra$, where $h \in
\bcz{2}{1}{\bbp^p}$ is the \emph{first Chern class} of the
hyperplane bundle in Bredon cohomology. Define $\xi \in
\dcrA{2}{1}{\bbp^p}$ as the element corresponding to $h$ via the
isomorphisms \eqref{eq:isompp}. It follows that
$\dcrA{2j}{j}{\bbp^p} \cong \bbz $ is generated by $\xi^j$, for
all $0\leq j \leq p$ and is $0$ for $j>p.$
\end{example}

Let $\xi \in \dcrA{2}{1}{X\times \bbp^p}$ denote the pull-back
under the projection $X\times \bbp^p \to \bbp^p$ of the class
$\xi\in \dcrA{2}{1}{\bbp^p}$ defined above, and let $\pi \colon
X\times \bbp^p \to X$ denote the projection onto $X$.

Given $\alpha \in \dcrA{i}{-r}{X\times \bbp^p},$ with $r, k \geq
0$, define $\alpha \cup \xi^k \in \dcrA{i+2k}{k-r}{X\times
\bbp^p}$ as follows:
\begin{equation}
\label{eq:ppp}
\alpha\cup \xi^k :=
\begin{cases}
\alpha \cdot h^k &, \text{ if } k\leq r \\
(\alpha \cdot h^r)\cup \xi^{k-r} &, \text{ if } k \geq r.
\end{cases}
\end{equation}
Here we are identifying Deligne and Bredon cohomology with weights
$\leq 0,$ and being careful to differentiate between $h$ and $\xi$
and between the product $\alpha\cdot h^k$ in Bredon cohomology and
the $\cup$ product for Deligne cohomology with positive weights.

The following result is a preliminary version of a more general
projective bundle formula.

\begin{proposition}
\label{prop:projbf1}
Let $X$ be a proper real holomorphic manifold. Given integers
$p\geq 0 $ and  $i, q \in \bbz$, the map
\begin{align*}
\psi \colon \oplus_{j=0}^p  \dcrA{i-2j}{q-j}{X} &\ \longrightarrow
\dcrA{i}{q}{X\times \bbp^p}\\
(a_0,\ldots,a_p) &\ \longmapsto \pi^*a_0 + \pi^*a_1 \cup \xi +
\cdots + \pi^* a_p \cup \xi^p
\end{align*}
is an isomorphism.
\end{proposition}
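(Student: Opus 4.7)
The plan is to apply the Five Lemma to a morphism of long exact sequences derived from Proposition~\ref{prop:change}(iii), reducing the statement to the corresponding projective bundle formulae in the three auxiliary theories that enter the defining cone~\eqref{eq:del_comp} of the Deligne complex.

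First I would verify that $\psi$ extends to a morphism between the long exact sequence of Proposition~\ref{prop:change}(iii) for $\dcrA{\bullet}{q}{X\times \bbp^p}$ and the direct sum (over $j=0,\ldots,p$) of the analogous sequences for $\dcrA{\bullet-2j}{q-j}{X}$. By Example~\ref{ex:cell}, $\xi\in\dcrA{2}{1}{\bbp^p}$ is defined to lift the Bredon class $h\in\bcz{2}{1}{\bbp^p}$ under $\varrho$, and its image under $pr_2\circ\nu$ is the standard $(1,1)$-class generating $F^1H^2(\bbp^p(\bbc);\bbc)^\frS$. Combined with Proposition~\ref{prop:basic}(ii) and the multiplicativity of $\nu$ and $\varrho$, this ensures $\psi$ commutes with all structure maps in the ladder.

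Second, I invoke the projective bundle formula in each auxiliary theory. For Bredon cohomology, Example~\ref{ex:cell} computes $\bcz{*}{*}{\bbp^p}$ as the truncated polynomial ring $\calb^{*,*}[h]/\langle h^{p+1}\rangle$; together with the multiplicative structure of Theorem~\ref{thm:mult} and the geometric cohomology theory property of \cite{Karpenko}, this yields
\[
\bigoplus_{j=0}^p \bca{i-2j}{q-j}{X(\bbc)} \xrightarrow{\;\cong\;} \bca{i}{q}{X(\bbc)\times \bbp^p(\bbc)}
\]
via $(b_j)\mapsto \sum_j \pi^* b_j\cdot h^j$. For complex singular cohomology the classical Künneth formula combined with $H^*(\bbp^p;\bbc)=\bbc[h]/\langle h^{p+1}\rangle$, where $h$ is $\frS$-invariant of Hodge type $(1,1)$, furnishes analogous projective bundle isomorphisms for both $H^*_{\mathrm{sing}}(-;\bbc)^\frS$ and the Hodge filtration $F^qH^*_{\mathrm{sing}}(-;\bbc)^\frS$.

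Third, these three formulae identify four of every five vertical arrows in each segment of the resulting ladder as isomorphisms, so the Five Lemma delivers the claim. The hard part will be the bookkeeping around the summands with $q-j<0$, where Definition~\ref{def:Deligne}(iii) sets $\dcrA{i-2j}{q-j}{X}:=\bca{i-2j}{q-j}{X(\bbc)}$ and the cup ``$\cup \xi^j$'' is interpreted through the case split in formula~\eqref{eq:ppp}. I need to check that the mixed cup product in~\eqref{eq:ppp}, together with the isomorphism $\dcrA{*}{0}{-}\cong\bca{*}{0}{-}$ of Proposition~\ref{prop:basic}(iii), glues consistently with the pure Deligne cups on the positive-weight summands, so that $\psi$ is well-defined as a single homomorphism and the Five Lemma can be applied uniformly across all weights.
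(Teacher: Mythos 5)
Your proposal follows essentially the same route as the paper's proof: the paper likewise reduces the claim to the five lemma applied to the long exact sequences of Proposition~\ref{prop:change}(iii), invoking the projective bundle formula for Bredon cohomology (via \cite{dS-plms}), the compatibility of Chern classes under the forgetful functor, and the fact that $\xi_\bbc$ is an $\frS$-invariant class of Hodge type $(1,1)$ so that cup product preserves the Hodge filtration. The only difference is cosmetic: you spell out the bookkeeping for the summands with $q-j<0$ and the case split in~\eqref{eq:ppp}, which the paper compresses into the phrase ``the five-lemma suitably applied.''
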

\begin{proof}
Denote by $\xi_\bbc$ the image of $\xi$ under the cycle map into
singular cohomology with complex coefficients. Since $\xi_\bbc$ is
of Hodge type $(p,p)$ and invariant under $\frS,$ the map
$a\mapsto \pi^*a\cup \xi_\bbc^j$ from $ H^{i-2j}(X(\bbc);\bbc) $
to $H^{i}(X(\bbc)\times \bbp^p(\bbc);\bbc)$ sends
$F^{q-j}H^{i-2j}(X(\bbc);\bbc)^\frS$ to $F^{q}H^{i}(X(\bbc)\times
\bbp^p(\bbc);\bbc)^\frS$.

Recall that Chern classes are compatible under the forgetful
functor from Bredon cohomology to singular cohomology and that the
projective bundle formula holds in both theories; cf. \cite{dS-plms}.
The result now follows from the definition of $\cup \xi$ in Deligne cohomology
and the five-lemma suitably applied to the long exact sequences in
Proposition \ref{prop:change}.
\end{proof}

\subsection{Product structure II: arbitrary weights}
\label{subsec:prodII}

Given $p,q \geq 0$ define the product
\begin{equation}
\label{eq:prod-neg}
\cup \colon \dcrA{i}{-p}{X} \otimes \dcrA{j}{-q}{X} \to
\dcrA{i+j}{-p-q}{X}
\end{equation}
simply as the product  in Bredon cohomology. Given $0\leq |r| \leq
p$, one uses \eqref{eq:ppp} to define $$ \star \ \colon
\dcrA{i}{-p}{X} \otimes \dcrA{j}{p-r}{X}\rightarrow
\dcrA{i+j+2p}{p-r}{X\times \bbp^p} $$ by sending $a\otimes b
\mapsto a\star b:= (a\cup \xi^p)\cup \pi^* b $. Now, let $
\pi_\dagger \colon \dcrA{i+j+2p}{p-r}{X\times \bbp^p} \to
\dcrA{i+j}{-r}{X}$ denote the composition of $\psi^{-1}$ (see
Proposition \ref{prop:projbf1}) with the projection onto the last
factor. Sending $a\otimes b\ \mapsto \  \pi_\dagger (a\star b)$
defines the product
\begin{equation}
\label{prod:mix}
\cup \colon \dcrA{i}{-p}{X} \otimes \dcrA{j}{p-r}{X} \to
\dcrA{i+j}{-r}{X},
\end{equation}

Combining Proposition \ref{prop:basic}, \eqref{eq:prod-neg} and
\eqref{prod:mix} one defines a product on Deligne cohomology which
makes $\dcrA{*}{*}{X} \to \bca{*}{*}{X(\bbc)}$ a natural map of
bigraded rings.

\begin{corollary}
\label{cor:pbf}
Under the hypothesis of Proposition \ref{prop:projbf1}, one has an
isomorphism of bigraded rings
$$
\dcrA{*}{*}{X}[T]/\la T^{p+1} \ra \cong \dcrA{*}{*}{X\times
\bbp^p}
$$
when  $T$ is given the bigrading $(2,1).$
\end{corollary}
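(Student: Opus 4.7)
The plan is to exhibit a ring isomorphism
$$
\Phi\colon \dcrA{*}{*}{X}[T]/\la T^{p+1}\ra \longrightarrow \dcrA{*}{*}{X\times \bbp^p}
$$
by the formula $\Phi(a\otimes T^j) = \pi^* a\cup \xi^j$, where $\pi\colon X\times\bbp^p\to X$ is the projection and $\xi\in \dcrA{2}{1}{X\times\bbp^p}$ is the pulled-back class introduced before Proposition \ref{prop:projbf1}. The bigrading matches, since $T$ has bidegree $(2,1)$ and $\xi^j$ has bidegree $(2j,j)$. Well-definedness amounts to $\xi^{p+1}=0$, which already holds on $\bbp^p$ itself: Example \ref{ex:cell} identifies $\dcrA{2(p+1)}{p+1}{\bbp^p}$ with $CH^{p+1}(\bbp^p)\otimes A = 0$, and pullback propagates the vanishing to $X\times\bbp^p$. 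Bijectivity of $\Phi$ on each bidegree is then precisely Proposition \ref{prop:projbf1}, so $\Phi$ is an isomorphism of bigraded abelian groups.

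The only substantive remaining point is that $\Phi$ respects products, i.e.\ that
$$
(\pi^* a\cup\xi^i)\cup(\pi^* b\cup\xi^j) \;=\; \pi^*(a\cup b)\cup \xi^{i+j}
$$
in $\dcrA{*}{*}{X\times\bbp^p}$. For classes $a,b$ of nonnegative weight the identity is immediate from Proposition \ref{prop:basic}(i): associativity and graded commutativity let us move $\xi^i$ past $\pi^*b$ (with no sign, since $\xi^i$ has even topological degree), and the naturality of $\cup$ under $\pi^*$—evident from the explicit cocycle pairing $\cup_\alpha$ and the functoriality of $\brAp$, $F^p\cale^*$, and $\cale^*$ on $\rav$—then collapses $\pi^*a\cup\pi^*b$ into $\pi^*(a\cup b)$.

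For the mixed- and negative-weight ranges, I would unwind the definitions from Subsection \ref{subsec:prodII}. There the products are built out of exactly the same two operations that define $\Phi$: pullback along a projection and cupping with a power of the class $\xi$, followed by the inverse of a projective bundle isomorphism. Compatibility therefore reduces to naturality plus another application of Proposition \ref{prop:projbf1}, this time carried out on $X\times\bbp^p\times\bbp^p$ where a second auxiliary copy of projective space is needed to multiply two $\Phi$-images. Once one checks that the two projective bundle decompositions on $X\times\bbp^p\times\bbp^p$ commute (which follows from the naturality statements in Proposition \ref{prop:projbf1}), the mixed-weight identity is forced by the positive-weight case. The main obstacle is thus purely one of bookkeeping: keeping track of which projection, which copy of $\bbp^p$, and which product (positive, negative, or mixed) is in play at each step, and then verifying that the various $\pi_\dagger$'s and $\psi^{-1}$'s commute with pullbacks — no essentially new ingredient beyond what has already been established is required.
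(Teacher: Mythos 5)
Your proposal is in substance the argument the paper intends: the corollary is stated with no proof, immediately after Proposition~\ref{prop:projbf1} and the construction of the bigraded product, and the intended justification is exactly the one you give. The two key ingredients --- vanishing of $\xi^{p+1}$ (from Example~\ref{ex:cell} applied to $\bbp^p$, pulled back) and bijectivity in each bidegree from Proposition~\ref{prop:projbf1} --- are correctly identified, and the positive-weight multiplicativity check via naturality and graded commutativity of the cone pairing is right.

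One inaccuracy worth flagging in the mixed-weight discussion: the auxiliary projective space used in \eqref{prod:mix} to define the mixed product $\dcrA{i}{-r}{Y}\otimes\dcrA{j}{r-s}{Y}\to\dcrA{i+j}{-s}{Y}$ has dimension $r$, the absolute value of the negative weight being multiplied, which has nothing to do with the $p$ of $X\times\bbp^p$ in the statement. So the bookkeeping you describe actually takes place on $X\times\bbp^p\times\bbp^r$ for varying $r$, not on $X\times\bbp^p\times\bbp^p$. That said, your diagnosis of the crux is correct: everything reduces to the fact that the projective bundle isomorphism $\psi$ of Proposition~\ref{prop:projbf1} commutes with pullback along the other projection, which holds because $\psi$ is assembled from $\pi^*$'s and $\cup\,\xi^k$'s, both of which pull back naturally. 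You assert rather than verify this commutativity, and it is the only part of the argument that is not immediate; but the paper itself leaves the same verification to the reader, so the level of detail here is comparable to the source.
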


The ring structure of $\cald^{*,*}$ can be
easily read from Table \ref{Table2}. See Example \ref{ex:br0-pt} for notation.
\bigskip

\begin{table}
\begin{tabular}{l!{\vrule}ccccccccc}
%
\multicolumn{1}{F}{8} &
%
\multicolumn{3}{V}{} &
%
\multicolumn{1}{V}{$0$} &
\multicolumn{1}{H}{$\bbr/\bbz(8)$} &
\multicolumn{1}{P}{$\tau^3\ve^2$} &
\multicolumn{1}{V}{$0$} &
\multicolumn{1}{P}{$\tau^2\ve^4$} \\
%
\multicolumn{1}{F}{7} &
%
\multicolumn{3}{V}{} &
%
\multicolumn{1}{V}{$0$} &
\multicolumn{1}{P}{$\rs$} &
\multicolumn{1}{V}{$0$} &
\multicolumn{1}{P}{$\tau^2\ve^3$} &
\multicolumn{1}{V}{$0$} \\
%
\multicolumn{1}{F}{6} &
%
\multicolumn{3}{V}{} &
%
\multicolumn{1}{V}{$0$} &
\multicolumn{1}{H}{$\bbr/\bbz(6)$} &
\multicolumn{1}{P}{$\tau^2\ve^2$} &
\multicolumn{1}{V}{$0$} &
\multicolumn{1}{P}{$\tau\ve^4$} \\
\multicolumn{1}{F}{5} &
%
\multicolumn{3}{V}{} &
%
\multicolumn{1}{V}{$0$} &
\multicolumn{1}{P}{$\rs$} &
\multicolumn{1}{V}{$0$} &
\multicolumn{1}{P}{$\tau\ve^3$} &
\multicolumn{1}{V}{$0$} \\
%
\multicolumn{1}{F}{4} &
\multicolumn{3}{V}{} &
\multicolumn{1}{V}{$0$} &
\multicolumn{1}{H}{$\bbr/\bbz(4)$ }&
\multicolumn{1}{P}{$\tau\ve^2$} &
\multicolumn{1}{V}{$0$} &
\multicolumn{1}{P}{$\ve^4$} \\
%
%
\multicolumn{1}{F}{3} &
\multicolumn{3}{V}{} &
\multicolumn{1}{V}{$0$} &
\multicolumn{1}{P}{$\rs$} &
\multicolumn{1}{V}{$0$} &
\multicolumn{1}{P}{$\ve^3$} &
\multicolumn{1}{V}{$0$} \\
\multicolumn{1}{F}{2} &
\multicolumn{3}{V}{} &
\multicolumn{1}{V}{$0$} &
\multicolumn{1}{H}{$\bbr/\bbz(2)$} &
\multicolumn{1}{P}{$\ve^2$} &
\multicolumn{1}{V}{$0$} &
\multicolumn{1}{V}{$0$}  \\
%
%
\multicolumn{1}{F}{1} &
\multicolumn{3}{V}{} &
\multicolumn{1}{V}{$0$} &
\multicolumn{1}{P}{$\rs$} &
\multicolumn{1}{V}{$0$} &
\multicolumn{1}{V}{$0$} &
\multicolumn{1}{V}{$0$}   \\
%
%
\multicolumn{1}{F}{0} &
\multicolumn{3}{V}{} &
\multicolumn{1}{H}{$1$} &
\multicolumn{1}{V}{$0$} &
\multicolumn{1}{V}{$0$} &
\multicolumn{1}{V}{$0$} &
\multicolumn{1}{V}{$0$}   \\
\hline
%
%
\multicolumn{1}{F}{-1} &
\multicolumn{3}{V}{} &
\multicolumn{1}{V}{$0$} &
\multicolumn{4}{V}{} &  \\
%
%
\multicolumn{1}{F}{-2} &
\multicolumn{3}{V}{} &
\multicolumn{1}{H}{$2\tau^{-1}$} &
\multicolumn{4}{V}{} & \\
%
%
\multicolumn{1}{F}{-3} &
\multicolumn{3}{V}{} &
\multicolumn{1}{P}{$\theta$} &
\multicolumn{4}{V}{} \\
%
%
\multicolumn{1}{F}{-4} &
\multicolumn{2}{V}{} &
\multicolumn{1}{P}{$\ve^{-1}\theta$} &
\multicolumn{1}{H}{$2\tau^{-2}$} &
\multicolumn{4}{V}{}   \\
%
%
\multicolumn{1}{F}{-5} &
\multicolumn{1}{V}{} &
\multicolumn{1}{P}{$\ve^{-2}\theta$} &
\multicolumn{1}{V}{$0$} &
\multicolumn{1}{P}{$\tau^{-1}\theta$} &
\multicolumn{4}{V}{} \\
%
\multicolumn{1}{F}{-6} &
\multicolumn{1}{P}{$\ve^{-3}\theta$} &
\multicolumn{1}{V}{$0$} &
\multicolumn{1}{P}{$\ve^{-1}\tau^{-1}\theta$} &
\multicolumn{1}{H}{$2\tau^{-3}$} &
\multicolumn{4}{V}{}  \\
%
%
\multicolumn{1}{F}{-7} &
\multicolumn{1}{V}{$0$} &
\multicolumn{1}{P}{$\ve^{-2}\tau^{-1}\theta$} &
\multicolumn{1}{V}{$0$} &
\multicolumn{1}{P}{$\tau^{-2}\theta$} &
\multicolumn{4}{V}{}  \\
%
\multicolumn{1}{F}{-8} &
\multicolumn{1}{P}{$\ve^{-3}\tau^{-1}\theta$} &
\multicolumn{1}{V}{$0$} &
\multicolumn{1}{P}{$\ve^{-1}\tau^{-2}\theta$} &
\multicolumn{1}{H}{$2\tau^{-4}$} &
\multicolumn{4}{V}{}  \\ \hline
%
%
%
\multicolumn{1}{F}{} &
{\color{blue} -3} &
{\color{blue} -2} &
{\color{blue} -1} &
{\color{blue} 0 } &
{\color{blue} 1 } &
{\color{blue} 2} &
{\color{blue} 3} &
{\color{blue} 4} &
\end{tabular}
\caption{Generators for cohomology of a point}
\label{Table2}
\end{table}

\newcommand{\bexp}{\eta}

\section{The exponential sequence}
\label{sec:exp}

In this section we show that $\de{\bbz}{1}$ is quasi-isomorphic to
$\calo^\times [-1]$. As
an immediate corollary, we obtain an \emph{exponential
  sequence} relating the cohomology of $\br{\bbz}{1}$,  $\calo$ and
$\calo^\times$. Using this sequence we obtain a new proof of
\emph{Weichold's Theorem} -- a classical result in real algebraic
geometry.

\begin{definition}Let $X$ be a smooth algebraic variety and let
  $(\calr_X^*,d_\calr)$ denote the following  cochain complex of
  abelian sheaves on $X$
\[
\cale^{0,0\, \times}_X \xrightarrow{\overline{\partial}\log}
\cale^{0,1}_X
\xrightarrow{\overline{\partial}}\cale_X^{0,2}\xrightarrow{\overline{\partial}}\dotsb
\]
where $\cale_X^{0,0\, \times}\subset\cale_X^0$ denotes the subsheaf of
nowhere zero functions and $\cale_X^{p,q}\subset\cale_X^{p+q}$ denotes
the subsheaf on invariant $(p+q)$-forms of Hodge type $(p,q)$.
\end{definition}

\begin{remark}
\begin{enumerate}
\item It is easy to check that $(\calr^*,d_\calr)$ is a  resolution of
  $\calo^\times$. Similarly, $\calo \to \cale^{0,0}
  \xrightarrow{\overline{\partial}} \cale^{0,1}
  \xrightarrow{\overline{\partial}}  \cdots $ is a soft
  resolution of $\calo$.
\item The exponential map $\calo \to \calo^\times$ extends to a map
$\exp \colon \cale^{0,*} \to \calr^*$ between the respective resolutions.
\end{enumerate}
\end{remark}

\begin{definition}
\label{def-delta}
Recall that an element in $\br{\bbz}{1}^1(U)$ is represented by
sums of pairs of the form $f\otimes m$,  with $f$ and $m$
equivariant maps such that
\begin{enumerate}
\item $f\colon S\to\bbc^\times$ is smooth and $\pi\colon S\to U$ is in
  $\widehat{U}$;
\item $m\colon S\to\bbz\in\underline{\bbz}(S)$ is locally constant.
\end{enumerate}
Let  $\bexp \colon \br{\bbz}{1}^1\to\cale^{0,0\, \times}$ be the map sending
 $f\otimes m\in \br{\bbz}{1}^1(U)$ to
\[
\bexp(f\otimes m)=\hat{\pi}_!(f^{m}).
\]
Alternatively,
$
\bexp(f\otimes m)(u)=\prod_{s\in\pi^{-1}(u)}f(s)^{m(s)}.
$
\end{definition}

\begin{remark}
It follows from this definition that, given $\sigma\in\br{\bbz}{1}^0$ and
$\alpha\in\br{\bbz}{1}^1$ we have
$\tau_1^0\sigma=\log\bexp(\sigma(1))-\log\bexp(\sigma(0))$ and $\tau_1^1\alpha= d\log \bexp\alpha$.
\end{remark}

\begin{proposition}
\label{exp-seq}
There is a quasi-isomorphism  $\varsigma\colon\de{\bbz}{1}\to\calr^*[-1]$
such that the composite
$\cale^*[-1]\to\de{\bbz}{1}\xrightarrow{\varsigma}\calr^*[-1]$ induces the map
$\exp\colon\bbc_X[-1]\to\calo_X^\times[-1]$, and the composition
$\cale^{0,*}[-1] \to \de{\bbz}{1} \to \calr^*[-1]$ induces the map
$\exp\colon\calo_X[-1]\to\calo_X^\times[-1]$.
\end{proposition}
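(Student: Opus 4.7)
The plan is to define $\varsigma$ explicitly, verify it is a chain map using the identity $\tau_1^1\alpha = d\log\bexp(\alpha)$ recalled in the preceding remark, and then reduce the quasi-isomorphism claim stalkwise to the classical weight-$1$ complex Deligne quasi-isomorphism $\bbz(1)_{\cald/\bbc}\simeq\calo^\times[-1]$. Since the Bredon complex $\br{\bbz}{1}$ is supported in degrees $\le 1$ (so the $a$-component is forced to vanish for higher $n$) and $F^1\cale^n$ contains no $(0,n-1)$-forms, I set $\varsigma^n = 0$ for $n\le 0$, $\varsigma^1(a,\theta,\omega) = \bexp(a)\exp(\omega) \in \cale^{0,0\,\times}$, and $\varsigma^n(a,\theta,\omega) = \omega^{0,n-1} \in \cale^{0,n-1}$ for $n\ge 2$.

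The chain-map verification splits into three cases. The square from degree $1$ to $2$ reduces, via $(\tau_1 a)^{0,1} = \bar\partial\log\bexp(a)$ and the vanishing of the $(0,1)$-component of $\theta\in\cale^{1,0}$, to the identity $\bar\partial\log(\bexp(a)\exp(\omega)) = \bar\partial\log\bexp(a) + \bar\partial\omega$. The squares from degree $n$ to $n+1$ for $n\ge 2$ reduce to $\bar\partial \omega^{0,n-1} = (d\omega)^{0,n}$. The square from degree $0$ to $1$ follows from the multiplicative formula $\bexp(f\otimes m)(u) = \prod_{s\in\pi^{-1}(u)} f(s)^{m(s)}$ combined with $\tau_1^0\sigma = \log\bexp(\sigma(1)) - \log\bexp(\sigma(0))$. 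To show $\varsigma$ is a quasi-isomorphism, I argue stalkwise: at a non-fixed point, an $\frS$-invariant neighborhood takes the form $U\sqcup\sigma U$, the equivariant data reduces to non-equivariant data on $U$, and the assertion becomes the standard quasi-isomorphism $\bbz(1)_{\cald/\bbc}\simeq\calo^\times[-1]$ (cf.\ \cite{esnault}); at a fixed point, the equivariant Poincaré lemma applied to $\cale^*$ and $F^1\cale^*$ together with the local structure of $\br{\bbz}{1}$ (whose cohomology sheaves can be computed from Example~\ref{ex:br0-pt}) again reduces the problem to the analogous weight-$1$ Deligne statement for the real form.

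For the two compatibility claims, the natural chain map $\cale^*[-1]\to\de{\bbz}{1}$ given by $\omega\mapsto(0,0,\omega)$, composed with $\varsigma$, sends $\omega$ to $\exp(\omega)$ in degree $1$ and to $\omega^{0,n-1}$ in higher degrees; this is precisely the exponential map $\cale^*\to\calr^*$ shifted by $-1$, inducing $\exp\colon\bbc_X[-1]\to\calo_X^\times[-1]$ on cohomology sheaves. Similarly, a chain map $\cale^{0,*}[-1]\to\de{\bbz}{1}$ defined by $\omega\mapsto(0,\partial\omega,\omega)$ (with the sign inherited from the cone differential) composed with $\varsigma$ is $\exp$ in degree $1$ and the identity in higher degrees, inducing the desired $\exp\colon\calo_X[-1]\to\calo_X^\times[-1]$. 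The main obstacle will be the stalkwise reduction at fixed points, where the intricate local structure of the Bredon complex $\br{\bbz}{1}$ (including its $\bbz/2$-contribution at bidegree $(1,1)$) must be carefully tracked together with the sign conventions in the cone differential in order to identify $\calh^1(\de{\bbz}{1})$ with $\calo_X^\times$ and show that the cohomology sheaves vanish elsewhere.
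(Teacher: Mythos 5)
Your definition of $\varsigma$, its verification as a chain map, and the treatment of the two compatibility statements all agree with what the paper does (you spell out the chain-map check; the paper calls it ``easy to check,'' and your explicit maps $\omega\mapsto(0,0,\omega)$ and $\omega\mapsto(0,\partial\omega,\omega)$ match the natural cone maps implicitly used). The substantive divergence is in the quasi-isomorphism argument. You propose to reduce stalkwise to the classical weight-$1$ complex Deligne quasi-isomorphism $\bbz(1)_{\cald/\bbc}\simeq\calo^\times[-1]$: this works cleanly at a non-fixed point, where an invariant neighborhood is $U\sqcup\sigma U$ and every ingredient is the non-equivariant one induced from $U$. At a fixed point, however, you do not carry out the argument --- you name it as ``the main obstacle'' and gesture at an equivariant Poincar\'e lemma plus ``careful tracking'' of the $\bbz/2$ in $\calb^{1,1}$. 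That is precisely where the real content lies: the stalk of $\calo_X^\times$ at a real point has a sign $\bbz/2$ that must be matched against the $\ve$-class coming from $\br{\bbz}{1}$, and nothing in your sketch identifies these or shows the rest of the cone long exact sequence collapses correctly. As written, this is a gap, not a proof.

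The paper avoids your difficulty entirely by not reducing to the complex case at all. After observing that both complexes have cohomology presheaves concentrated in degree $1$, it proves $H^1(\varsigma)$ is a stalkwise isomorphism by a direct surjectivity/injectivity computation: given a germ $g\in\calo^\times_u$, it writes down an explicit preimage $(\alpha,\omega,h)$ with $\alpha=0$, $h$ a logarithm of $g$, and $\omega=\partial g/g$; for injectivity, given a closed $(\alpha,\omega,h)$ killed by $\varsigma$, it uses $\omega=-(dh+d\log\bexp(\alpha))=0$ and a coherent choice of logarithms $\log f_i$ with $\sum_i m_i\log f_i=-h$ to manufacture a degree-$0$ element $\sigma$ with $d^{-1}_{\de{\bbz}{1}}(\sigma)=(\alpha,\omega,h)$. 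This single argument works uniformly at fixed and non-fixed points and never needs the local Bredon computation you worry about. If you want to complete your proof along your own lines, you would need to make the fixed-point stalk computation precise --- in particular, to show that the Bredon $\bbz/2$ in degree $1$ injects via $\bexp$ onto the sign component of $\calo^\times_u\cong\bbz/2\times\bbr_{>0}\times(\text{germs vanishing at }u)$ and that the remaining pieces of the cone long exact sequence assemble correctly. The paper's direct route is considerably cheaper.
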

\begin{proof}Let $X$ be a projective real variety and
let $\varsigma\colon \de{\bbz}{1}\to\calr^*[-1]$ denote the map of complexes
displayed in the following diagram:
$$
\xymatrix{
\ar[r] & \br{\bbz}{1}^0\ \ar[d]\ar[r]^-{d_{\de{\bbz}{1}}^{-1}}  &
\br{\bbz}{1}^1\oplus\cale^{1,0}\oplus\cale^0\ar[r]^-{d_{\de{\bbz}{1}}^0}
\ar[d]^{\bexp\cdot\exp}  &
\cale^{1,1}\oplus\cale^{2,0}\oplus\cale^1\ar[r]\ar[d]^{p^{0,1}} &\dotsb\\
 \ar[r] & 0        \ar[r]                    &
\calr^0   \ar[r]^{\overline{\partial}\log}               & \calr^1
\ar[r]^{\overline{\partial}}&  \dotsb
}
$$
where $\bexp\cdot\exp$ denotes the map $(f,\omega,h)\mapsto
\bexp(f)\exp h$, and  $p^{0,1}$ denotes the projection from $\cale^1$
to $\cale^{0,1}$. Similarly, for $i>1$ we set $\varsigma^i=p^{0,i}$,
where $p^{0,i}\colon\cale^i\to\cale^{0,i}$ is the projection.

It is easy to check  $\varsigma$ is a map of cochain complexes and
that  the cohomology presheaves of both complexes are concentrated in
dimension $1$.  Hence it suffices to check that $H^1(\varsigma)$ is an
isomorphism on the stalks.

\noindent
\emph{Surjectivity:} Note that $H^1(\calr^*[-1]) =
H^0(\calr^*)\cong{\calo^\times}$ and let  $g\in\calo^\times_u$, $u\in
X$. Let $h\in\calo_u$ be such that $\exp{h}=g$.  Set $\alpha=0$ in
$\bbz(1)_{\calb{r},u}$ (so that $\alpha$ can be represented, for
example, by $1\otimes 0$) and set $w=\frac{\partial
  g}{g}\in\cale^{1,0}_u$. Then we have,  $d_{\de{\bbz}{1}}^0(\alpha,
\omega, h)=0$ and $\varsigma_u(\alpha, \omega,  h)=g$.

\noindent
\emph{Injectivity:}  Let $(\alpha, \omega,
h)\in\bbz(1)^1_{\cald/\bbr,u}$, $u\in X$, be such that
$\varsigma_u(\alpha, \omega, h)=0$ and $d^0_{\de{\bbz}{1}}(\alpha,
\omega, h)=0$. The first equality just means that $-h$ is  a logarithm
for $\bexp(\alpha)$. From the second equality we get
\[
\omega=-dh-\tau^1_1\alpha=-(dh+d\log\bexp(\alpha))=0.
\]
Let $\sum_i f_i\otimes m_i$ be a representative for $\alpha$ (cf. Definition~\ref{exp-seq}).
Choose  $\log f_i$ so  that $\sum_i m_i\log f_i= \ - h $, shrinking neighborhoods if necessary,
and define
\[
\sigma:=\left( \Delta^1\ni t\mapsto \exp \left[\ -\  t \sum_i m_i \log f_i \right] \right )\in \bbz(1)_{\calb r, u}^0.
\]
Then, by our choice of $\log  f_i$, we have
$d^{-1}_{\de{\bbz}{1}}(\sigma)=(\alpha,\omega,h)$.

The remaining assertions are evident.
\end{proof}

\begin{corollary}
\label{cor:qi}
The complexes $\de{\bbz}{1}$ and $\calo^\times[-1]$ are quasi-isomorphic.
\end{corollary}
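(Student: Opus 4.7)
The plan is to deduce this corollary immediately by composing two quasi-isomorphisms already in hand. From the preceding Proposition \ref{exp-seq}, one has a quasi-isomorphism
\[
\varsigma\colon \de{\bbz}{1}\ \longrightarrow\ \calr^*[-1].
\]
On the other hand, the remark following the definition of $(\calr^*,d_\calr)$ observes that the natural inclusion
\[
\calo^\times\ \hookrightarrow\ \cale^{0,0\,\times}=\calr^0
\]
realizes $\calr^*$ as a (soft) resolution of the sheaf $\calo^\times$, i.e., $\calo^\times\to \calr^*$ is a quasi-isomorphism of complexes of sheaves on $X$.

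Thus I would simply shift this second quasi-isomorphism by $-1$, obtaining $\calo^\times[-1]\to \calr^*[-1]$, and compose inversely with $\varsigma$ in the derived category. More precisely, both $\de{\bbz}{1}$ and $\calo^\times[-1]$ become canonically isomorphic to $\calr^*[-1]$ in the derived category of sheaves on $X_{\mathrm{eq}}$, so they are quasi-isomorphic to each other. There is no real obstacle here — the content is entirely in Proposition \ref{exp-seq}; the corollary is just the observation that $\calr^*[-1]$ is a resolution of $\calo^\times[-1]$.
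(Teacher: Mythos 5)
Your proposal is correct and coincides with the paper's (implicit) argument: the paper states the corollary with no further proof precisely because Proposition \ref{exp-seq} gives $\varsigma\colon \de{\bbz}{1}\to\calr^*[-1]$ as a quasi-isomorphism and the remark preceding it already notes that $\calr^*$ is a resolution of $\calo^\times$, so the two complexes are identified in the derived category exactly as you describe.
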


\begin{corollary}[Exponential Sequence]
\label{cor:exp-seq}
Let $X$ be a smooth proper real algebraic variety. Then there is a
long exact sequence
{\footnotesize
$$
\to
H_{\text{Br}}^{*,1}(X(\bbc);\uZ)\xrightarrow{\vartheta}H^*(X;\calo_X)
\xrightarrow{\exp} H^*(X;\calo_X^*) \to H_{\text{Br}}^{*+1,1}(X(\bbc);\uZ)\to
$$
}
where $\vartheta$ denotes the composite
$$
H_{\text{Br}}^{*,1}(X(\bbc);\uZ)\xrightarrow{\tau_1}
H^*(X(\bbc);\bbc)^\mathfrak{G}\twoheadrightarrow
H^{*}(X;\calo_X)=H_{\overline{\partial}}^{*,1}(X(\bbc))^\mathfrak{G},
$$
and the latter denotes the invariants of the Dolbeault cohomology
of the complex manifold $X(\bbc)$.
\end{corollary}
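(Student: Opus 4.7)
My plan is to deduce the exponential sequence directly from the already-established quasi-isomorphism $\de{\bbz}{1} \simeq \calo^\times[-1]$ of Corollary \ref{cor:qi}, together with a reformulation of the Deligne complex for $p=1$ as a fiber against $\cale^{0,*}$.

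The first step is to exhibit a distinguished triangle of complexes of sheaves on $\rav$,
$$
\de{\bbz}{1} \longrightarrow \br{\bbz}{1} \xrightarrow{\bar\tau} \cale^{0,*} \xrightarrow{+1},
$$
where $\bar\tau$ is the composition of $\tau_1\colon \br{\bbz}{1}\to \cale^*$ with the projection $\cale^* \twoheadrightarrow \cale^{0,*}$. This should follow from the Hodge-filtration short exact sequence $0 \to F^1\cale^* \to \cale^* \to \cale^{0,*} \to 0$ combined with the defining formula $\de{\bbz}{1} = \cone(\br{\bbz}{1} \oplus F^1\cale^* \xrightarrow{\tau_1-\imath} \cale^*)[-1]$. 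Intuitively, the copy of $F^1\cale^*$ paired via $-\imath$ with $\cale^*$ annihilates the image of $F^1\cale^*$ inside $\cale^*$, so that in the fiber $\cale^*$ is effectively replaced by its quotient $\cale^{0,*}$. Concretely, I would construct the natural comparison map between the two cone descriptions and verify that its kernel is the acyclic complex associated to $\mathrm{id}\colon F^1\cale^* \to F^1\cale^*$.

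Next, I would apply $\check{\text{C}}$ech hypercohomology on $X_{\text{eq}}$ to the triangle, obtaining
$$
\cdots \to \bbh^n(X;\de{\bbz}{1}) \to \bbh^n(X;\br{\bbz}{1}) \xrightarrow{\vartheta} \bbh^n(X;\cale^{0,*}) \to \bbh^{n+1}(X;\de{\bbz}{1}) \to \cdots
$$
Using Corollary \ref{cor:qi} the outer terms become $H^{n-1}(X;\calo^\times_X)$ and $H^n(X;\calo^\times_X)$, while the middle terms are $\bcz{n}{1}{X(\bbc)}$ (by definition of $\br{\bbz}{1}$) and $H^n(X;\calo_X)$ (since $\cale^{0,*}$ is a soft resolution of $\calo_X$). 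Reindexing produces precisely the asserted long exact sequence.

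Finally, I must identify the maps. The description of $\vartheta$ as $\tau_1$ followed by projection onto Dolbeault cohomology is immediate from the triangle. For the middle map $H^n(X;\calo_X) \to H^n(X;\calo^\times_X)$, I would invoke the second assertion of Proposition \ref{exp-seq}: this identifies the map $\cale^{0,*}[-1] \to \de{\bbz}{1}$ (and hence, after shift, the connecting morphism $\cale^{0,*} \to \de{\bbz}{1}[1]$) with the exponential under the soft resolutions $\cale^{0,*} \simeq \calo_X$ and $\calr^* \simeq \calo^\times_X$. The main obstacle I anticipate is the first step: though essentially formal, the cone manipulation demands careful handling of signs and of the direct-sum structure, and is most cleanly executed either via the octahedral axiom applied to $F^1\cale^* \hookrightarrow \cale^* \twoheadrightarrow \cale^{0,*}$, or via the explicit comparison just sketched. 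Once this reduction is in place, the remainder of the proof is bookkeeping via the previously established Corollary \ref{cor:qi} and Proposition \ref{exp-seq}.
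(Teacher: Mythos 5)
Your proposal is correct and follows essentially the same route as the paper: both deduce the exponential sequence from the cone presentation of $\de{\bbz}{1}$, the quasi-isomorphism $\de{\bbz}{1}\simeq\calo^\times[-1]$, and the identification $\cone(F^1\cale^*\to\cale^*)\simeq\cale^{0,*}$, with Proposition~\ref{exp-seq} supplying the identification of the boundary map with $\exp$. The one organizational difference is that you perform the cone reduction at the level of complexes (by showing the kernel of the comparison map $\de{\bbz}{1}\to\cone(\br{\bbz}{1}\xrightarrow{\bar\tau}\cale^{0,*})[-1]$ is the acyclic cone of $\operatorname{id}_{F^1\cale^*}$), whereas the paper works directly with the hypercohomology long exact sequence and at that point must invoke the injectivity of $\check\bbh^*(X;F^1\cale^*)\to\check\bbh^*(X;\cale^*)$ (strictness of the Hodge filtration) to pass to the cokernel; your chain-level argument renders that step automatic.
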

\begin{proof}
From the hypercohomology long exact sequence of the cone and
Lemma~\ref{exp-seq} we get the following exact sequence
\[
\to H_{\text{Br}}^{*,1}(X(\bbc);\uZ)\oplus
\check\bbh^*(X;F^1\cale^*_X)\xrightarrow{\iota_1} \check\bbh^*(X;\cale^*_X)
\to H^*(X;\calo_X^\times)\to
\]
which, in turn, gives
{\small
\[
H_{\text{Br}}^{*,1}(X(\bbc);\uZ)\xrightarrow{\tau_1} \coker
\left(\check\bbh^*(X;F^1\cale^*_X)\to  \check\bbh^*(X;\cale^*_X)\right)
\to  H^*(X;\calo_X^\times).
\]
}
Now, since $\cone(F^1\cale^*\to\cale^*)\simeq\cale^{0,*}$ and since
$\check\bbh^*(X;F^1\cale^*_X)\to  \check\bbh^*(X;\cale^*_X)$
is injective, we have
\[
\coker \left(\check\bbh^*(X;F^1\cale^*_X)\to  \check\bbh^*(X;\cale^*_X)\right) \cong \check\bbh^*(X;\cale_X^{0,*})\cong H^*(X;\calo_X).
\]
The assertion about the map $\vartheta$ follows immediately  from the
construction of this  sequence.
\end{proof}

\begin{remark}
A similar exact sequence appears in \cite{kras-char}.
\end{remark}

\subsection{An application}

Given a real variety $X$, denote $S= \pi_0(X(\bbr))$. Hence
$H^0(X(\bbr);\zs) \cong (\zs)^S$ and $\tilde{H}^0(X(\bbr);\zs) \cong
(\zs)^S/\zs $, where $\zs \subset (\zs)^S$ is the subgroup of constant
functions.

\begin{lemma}
\label{lem:br21}
\label{H^{2,1}-real curve}
Let $X$ be an irreducible, smooth, projective curve over $\bbr$, of genus
$g$. Let $c$ denote the number of connected components of $X(\bbr)$
Then
$$
H_{\text{Br}}^{2,1}(X(\bbc);\uZ)\ \cong\ \bbz \times (\zs)^S/\zs \
\cong \ \bbz\times
\left\{
\begin{array}{rl}
(\bbz/2)^{c-1} & \text{if $c\neq 0$}\\
0              & \text{if $c=0$}
\end{array}
\right\}
$$
\end{lemma}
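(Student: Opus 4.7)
My plan is to apply the exponential sequence of Corollary~\ref{cor:exp-seq} and to split the resulting cokernel using a degree map and a signature map.

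For a smooth projective real curve $X$, the soft $\overline{\partial}$-resolution $0\to \calo_X \to \cale^{0,0}_X \to \cale^{0,1}_X \to 0$ has length one on the equivariant site, so $H^n(X;\calo_X) = 0$ for $n \ge 2$. Substituting this into Corollary~\ref{cor:exp-seq} truncates it to
$$H^1(X;\calo_X) \xrightarrow{\exp} H^1(X;\calo_X^*) \longrightarrow \bcz{2}{1}{X(\bbc)} \longrightarrow 0.$$
I would identify $H^1(X;\calo_X^*)\cong \mathrm{Pic}(X)$ (the Picard group of the real variety) and $\exp H^1(X;\calo_X)$ with the identity component $\mathrm{Pic}(X)^0$ (its image is a connected divisible subgroup whose Lie algebra is $H^1(X;\calo_X)$, hence equals the identity component of $\mathrm{Pic}(X)$ as a real Lie group). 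This gives $\bcz{2}{1}{X(\bbc)} \cong \pi_0(\mathrm{Pic}(X))$.

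Next I construct two homomorphisms out of $\bcz{2}{1}{X(\bbc)}$. The \emph{degree} $\deg\colon \bcz{2}{1}{X(\bbc)}\to \bbz$ is the composition of the forgetful functor $\varphi$ of~\eqref{eq:forgetful} with the natural identification $H^2_{\text{sing}}(X(\bbc);\bbz(1))^\frS \cong \bbz$. The \emph{signature} $\mathrm{sg}\colon \bcz{2}{1}{X(\bbc)} \to (\zs)^S/\zs$ is built from the short exact sequence of sheaves on the equivariant site
$$0 \to \calo_X^{*,+} \to \calo_X^* \to j_*\zs \to 0,$$
where $j\colon X(\bbr)\hookrightarrow X$, $\calo_X^{*,+}\subset\calo_X^*$ is the subsheaf of invertible functions positive on $X(\bbr)$, and the surjection records the sign of $f$ on each real component; in the resulting long exact sequence, $H^0(X;\calo_X^*)=\bbr^\times$ lands in the diagonal $\zs\subset (\zs)^S = H^0(X;j_*\zs)$, and the signature map is obtained by passing to the quotient modulo $\mathrm{Pic}(X)^0$.

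The main obstacle will be to show $(\deg,\mathrm{sg})$ is an isomorphism $\bcz{2}{1}{X(\bbc)} \xrightarrow{\cong} \bbz \times (\zs)^S/\zs$. Surjectivity of $\deg$ holds because equivariant topological line bundles realize every integer first Chern class (for $c>0$ a real point is a degree-one divisor; for $c=0$ the class $h$ of Example~\ref{ex:cell} for $\bbp^1$ with its split real structure, transported via a suitable equivariant topological map, supplies a generator). Surjectivity of $\mathrm{sg}$ for $c>0$ is achieved by choosing divisors supported on distinct real components with prescribed signs. Injectivity is the delicate step: one must show that a class of trivial degree and trivial signature lifts through $\exp$ from $H^1(X;\calo_X)$. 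I would accomplish this by comparing the long exact sequences attached to the signature sequence above with the one coming from
$$0 \to j^\bbc_!\bbz(1) \to \calo_X \xrightarrow{\exp} \calo_X^{*,+} \to 0,$$
where $j^\bbc\colon X(\bbc)\setminus X(\bbr) \hookrightarrow X$ is the open inclusion; tracing classes through these sequences pins down the image of $\mathrm{Pic}(X)^0$ precisely. Finally, the count $(\zs)^S/\zs \cong (\bbz/2)^{c-1}$ for $c>0$ and $=0$ for $c=0$ completes the identification.
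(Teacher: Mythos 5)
Your approach is genuinely different from the paper's. The paper proves the lemma in a few lines by Poincar\'e duality: $\bcz{2}{1}{X(\bbc)} \cong H^{\text{Br}}_{0,0}(X(\bbc);\uZ)$, and then invokes a known short exact sequence
$0\to H^{\text{sing}}_0(X(\bbc)/\frS;\bbz)\to H^{\text{Br}}_{0,0}(X(\bbc);\uZ)\to H_0(X(\bbr);\zs)\to 0$, which splits off $\bbz$ and reduces everything to counting components of $X(\bbr)$. You instead go through the exponential sequence of Corollary~\ref{cor:exp-seq}, identify $\bcz{2}{1}{X(\bbc)}$ with $\pi_0(\operatorname{Pic}(X))$, and try to realize that group via a degree map and a signature map. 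This is a heavier route: the paper uses Lemma~\ref{lem:br21} (together with Proposition~\ref{prop:pic}) \emph{to prove} Weichold's theorem about $\operatorname{Pic}(X)$, so computing $\pi_0(\operatorname{Pic}(X))$ from scratch is roughly as hard as the lemma itself, and your ``injectivity is the delicate step'' sketch is essentially asking to reprove Weichold.

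There is also a concrete error in the $c=0$ case. You claim surjectivity of $\deg\colon\bcz{2}{1}{X(\bbc)}\to H^2_{\text{sing}}(X(\bbc);\bbz(1))^\frS\cong\bbz$ ``because the class $h$ of Example~\ref{ex:cell} for the split $\bbp^1$, transported via a suitable equivariant map, supplies a generator.'' This fails when $X(\bbr)=\emptyset$. In that case $\sigma$ acts freely and orientation-reversingly, $X(\bbc)/\frS$ is a closed non-orientable surface, and under the free-action identification $\bcz{2}{1}{X(\bbc)}\cong H^2(X(\bbc)/\frS;\widetilde{\bbz})$ the forgetful map is the pullback along the orientation double cover, which is multiplication by $2$ on top cohomology. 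Concretely, for the non-split conic $C\colon x^2+y^2+z^2=0$ one has $\bcz{2}{1}{C(\bbc)}\cong\operatorname{Pic}(C)\cong\bbz$ generated by $\calo_C(1)$, whose underlying complex line bundle has degree $2$; so $\varphi$ has image $2\bbz\subsetneq\bbz$. The abstract conclusion $\bcz{2}{1}{X(\bbc)}\cong\bbz$ still holds, but the specific map $(\deg,\operatorname{sg})$ you construct is not an isomorphism, so the argument as stated does not close. (For $c>0$ the degree-one divisor argument is fine, but the surjectivity of $\operatorname{sg}$ and especially the injectivity step remain sketches that need to be carried out; they do not obviously reduce to anything simpler than the statement being proved.)
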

\begin{proof}
By Poincar\'{e} dualtity $H_\text{Br}^{2,1}(X(\bbz);\uZ)\cong
H^{\frS}_0(X(\bbc);\uZ)$, where
the last group denotes $\bbz$-graded Bredon homology.
The following
exact sequence is well known (see \cite{LLFM03})
$$
0\to H^{\text{sing}}_0(X(\bbc)/\frS;\bbz)\to
H^{\text{Br}}_{0,0}(X(\bbc);\uZ)\to H_0(X(\bbr);\zs)\to 0.
$$

If $X(\bbr)=\varnothing$ the sequence gives
$H^\frS_0(X(\bbc);\uZ)\cong\bbz$ since under the above assumptions
$X(\bbc)/\frS$ is connected.

If $X(\bbr)\neq\varnothing$ then $H^{\text{Br}}_{0,0}(X(\bbc);\uZ)\cong
\bbz\times \widetilde{H}^{\text{Br}}_{0,0}(X(\bbc);\uZ)$, where the
$\widetilde{H}^{\text{Br}}_{*,*}(-;\uZ)$ denotes reduced Bredon homology.
There is a  reduced version of the sequence above which gives $
\widetilde{H}^{\text{Br}}_{0,0}(X(\bbc);\uZ)\cong
\widetilde{H}_0(X(\bbr);\zs) \cong (\zs)^{c-1}, $ because
$\widetilde{H}^\text{sing}_0(X(\bbc)/\frS;\bbz)=0$.
\end{proof}

Denote $V := H^1(X_\bbc,\calo)$ and let $\Lambda \subset V$ be the
lattice
$$
\Lambda := \operatorname{Im}\{ j_\bbc \colon
H^1_{\text{sing}}(X_\bbc,\bbz(1) ) \to H^1(X_\bbc,\calo) \},
$$
so that $Pic(X_\bbc)\cong V/\Lambda$.
Taking fixed points gives a short exact sequence
\begin{equation}
\label{eq:ses-pic}
0 \to \Lambda^\frS \to V^\frS \to Pic_0(X_\bbc)^\frS \to
H^1(\frS,\Lambda) .
\end{equation}
This shows that $V^\frS/\Lambda^\frS$ is the connected component $Pic_0(X_\bbc)^\frS_0$.
Now, Proposition \ref{prop:bor-bre} shows that
$\bca{1}{1}{X(\bbc)} \cong \bcabor{1}{1}{X(\bbc)}$, hence one can use  the Leray-Serre spectral sequence to conclude that the image of the natural map $j\colon \bcz{1}{1}{X(\bbc)} \to H^1(X_\bbc,\calo)$
is precisely $\Lambda^\frS$.

\begin{proposition}
\label{prop:pic}
Let $X$ be an irreducible, smooth, projective curve over $\bbr$, of genus
$g$.  Then
$$
Pic(X) \cong Pic_0(X_\bbc)_0^\frS \times \bcz{2}{1}{X(\bbc)}.
$$
\end{proposition}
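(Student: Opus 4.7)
The plan is to apply the Exponential Sequence (Corollary \ref{cor:exp-seq}) in degree $*=1$, identify the kernel and cokernel using the preparatory material immediately preceding the proposition, and then split the resulting short exact sequence.

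First, since $X$ is a curve, $H^2(X;\calo_X) = H^{0,2}_{\bar\partial}(X(\bbc))^\frS = 0$, so the exponential sequence collapses to
\begin{equation*}
\bcz{1}{1}{X(\bbc)} \xrightarrow{\vartheta} H^1(X;\calo_X) \xrightarrow{\exp} Pic(X) \to \bcz{2}{1}{X(\bbc)} \to 0,
\end{equation*}
where I use the standard identification $H^1(X;\calo_X^\times) = Pic(X)$. By the discussion preceding the proposition, $H^1(X;\calo_X) = V^\frS$ and the natural map $\bcz{1}{1}{X(\bbc)} \to V^\frS$ has image exactly $\Lambda^\frS$; since this natural map coincides with $\vartheta$ (both factor through $H^1(X(\bbc);\bbc)^\frS$ by Corollary \ref{cor:exp-seq}), we conclude $\operatorname{Im}(\vartheta) = \Lambda^\frS$. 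Combining with the short exact sequence \eqref{eq:ses-pic}, which identifies $V^\frS/\Lambda^\frS$ with the identity component $Pic_0(X_\bbc)^\frS_0$, yields the short exact sequence
\begin{equation*}
0 \to Pic_0(X_\bbc)^\frS_0 \to Pic(X) \to \bcz{2}{1}{X(\bbc)} \to 0.
\end{equation*}

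To conclude, I would split this extension. The kernel $Pic_0(X_\bbc)^\frS_0 = V^\frS/\Lambda^\frS$ is a compact connected real torus, hence a divisible abelian group. By Lemma \ref{lem:br21}, the cokernel $\bcz{2}{1}{X(\bbc)}$ is finitely generated, isomorphic to $\bbz \times (\bbz/2)^{c-1}$ (or $\bbz$ when $c=0$). Any extension of a finitely generated abelian group by a divisible one splits, because $\operatorname{Ext}^1(\bbz, D) = 0$ for any $D$ and $\operatorname{Ext}^1(\bbz/n, D) = D/nD = 0$ when $D$ is divisible. This produces the desired product decomposition.

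The only nonroutine input is the equality $\operatorname{Im}(\vartheta) = \Lambda^\frS$ inside $V^\frS$; fortunately, this is already supplied in the paragraph before the proposition via the Bredon/Borel comparison (Proposition \ref{prop:bor-bre}) and a Leray-Serre spectral sequence argument, so the rest of the argument is purely formal homological algebra.
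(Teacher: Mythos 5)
Your proposal is correct and follows essentially the same route as the paper: both derive the four-term exact sequence $\bcz{1}{1}{X(\bbc)} \to H^1(X;\calo_X) \to Pic(X) \to \bcz{2}{1}{X(\bbc)} \to 0$ from the exponential sequence (using $H^2(X;\calo_X)=0$ for a curve) and the identification of the image as $\Lambda^\frS$ from the paragraph preceding the proposition. The paper compresses the conclusion into ``the result follows,'' whereas you usefully spell out the missing step — the splitting of the short exact sequence $0 \to V^\frS/\Lambda^\frS \to Pic(X) \to \bcz{2}{1}{X(\bbc)} \to 0$ via divisibility of the torus and finite generation of $\bcz{2}{1}{X(\bbc)}$ (Lemma~\ref{lem:br21}), which is exactly the argument the authors are implicitly invoking.
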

\begin{proof}
By the previous results, $Pic(X)$ fits in the following exact sequence
$$
\bcz{1}{1}{X(\bbc)} \xrightarrow{j} H^1(X_\bbc;\calo)^\frS\to
Pic(X) \xrightarrow{c_1} \bcz{2}{1}{X(\bbc)} \to 0.
$$
The result follows.
\end{proof}

\begin{corollary}[Weichold's Theorem~{\cite[Proposition 1.1]{PW91}}]
\label{cor:weich}
With $X$ as above, let $c$ denote the number of connected components
of $X(\bbr)$.
Then
$$
Pic(X) \ \cong\ \bbz \times (\bbr/\bbz)^g\times
\left\{
\begin{array}{rl}
(\bbz/2)^{c-1} & \text{if $c\neq 0$}\\
0              & \text{if $c=0$}
\end{array}
\right\}
$$
\end{corollary}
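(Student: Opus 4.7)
The plan is to bootstrap the two preceding results (Proposition~\ref{prop:pic} and Lemma~\ref{lem:br21}) and then reduce the remaining content of Weichold's theorem to the assertion that the identity component $Pic_0(X_\bbc)_0^\frS$ of the real Jacobian is a real torus of dimension $g$. Concretely, Proposition~\ref{prop:pic} gives
$$Pic(X)\cong Pic_0(X_\bbc)_0^\frS\times \bcz{2}{1}{X(\bbc)},$$
and Lemma~\ref{lem:br21} identifies the second factor with $\bbz\times (\zs)^S/\zs$, which is $\bbz\times(\bbz/2)^{c-1}$ when $c\neq 0$ and $\bbz$ when $c=0$. This matches the claimed second and third factors of $Pic(X)$, so only the $(\bbr/\bbz)^g$ factor remains to be extracted.

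To identify $Pic_0(X_\bbc)_0^\frS$, I would use the exact sequence~\eqref{eq:ses-pic}, which identifies the image of $V^\frS$ inside $Pic_0(X_\bbc)^\frS$ with the quotient $V^\frS/\Lambda^\frS$, where $V=H^1(X_\bbc,\calo)$ and $\Lambda$ is the image of $H^1_{\text{sing}}(X_\bbc,\bbz(1))$. Since $V^\frS$ is connected, this image is precisely the identity component $Pic_0(X_\bbc)_0^\frS$. Next I would compute $\dim_\bbr V^\frS=g$ by observing that the involution on $V$ coming from $\sigma$ together with complex conjugation of coefficients is $\bbc$-antilinear, so its fixed subspace is a real form of the $g$-dimensional complex vector space $V$.

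Finally, I would show that $\Lambda^\frS$ is a cocompact lattice of full rank $g$ inside $V^\frS\cong\bbr^g$. The cleanest route is to note that $Pic_0(X_\bbc)^\frS$ is compact (being the fixed point set of a continuous involution on the compact real torus $Pic_0(X_\bbc)$), so its identity component is compact. Combined with the fact that $\Lambda^\frS$ is a discrete, finitely generated subgroup of the $g$-dimensional real vector space $V^\frS$ with compact quotient, this forces $\Lambda^\frS\cong\bbz^g$ and
$$V^\frS/\Lambda^\frS\cong (\bbr/\bbz)^g.$$
Assembling the three factors then yields the stated formula.

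The main obstacle is the rank computation for $\Lambda^\frS$; in particular, the coboundary term $H^1(\frS,\Lambda)$ in~\eqref{eq:ses-pic} contributes $2$-torsion to $Pic_0(X_\bbc)^\frS/Pic_0(X_\bbc)_0^\frS$ which I have to be careful to quotient out. The cleanest way to bypass a direct analysis of this $H^1$ is the compactness argument above; alternatively, one could decompose the $\frS$-lattice $H^1_{\text{sing}}(X_\bbc,\bbz)$ into summands of type $\bbz[+]$, $\bbz[-]$ and $\bbz[\frS]$ and read off the invariants explicitly, but that requires more hands-on Hodge-theoretic input about how the real structure interacts with the polarization.
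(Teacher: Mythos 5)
Your proof is correct and follows the same skeleton as the paper's: combine Proposition~\ref{prop:pic} with Lemma~\ref{lem:br21} to get two of the three factors, then identify $Pic_0(X_\bbc)_0^\frS$ with $(\bbr/\bbz)^g$. The paper's own proof is a one-liner that treats the last identification as immediate; you spell it out (antilinearity of the $\frS$-action on $V=H^1(X_\bbc,\calo)$ gives $\dim_\bbr V^\frS=g$, and compactness of $Pic_0(X_\bbc)^\frS$ forces $\Lambda^\frS$ to be a full-rank lattice), which is a legitimate and clean way to supply the omitted step, and it correctly sidesteps the $H^1(\frS,\Lambda)$ term in~\eqref{eq:ses-pic} by restricting to the identity component.
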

\begin{proof}
This follows directly from the proposition together with Lemma
\ref{lem:br21}.
\end{proof}

\newcommand{\hatrho}{\hat{\rho}}
\newcommand{\hrz}{\hatrho_{\alpha_0}}
\newcommand{\fz}{{f^0_{\alpha_0}}}
\newcommand{\hGG}{\hat{G}_{\alpha_0\alpha_1}}

\section{The group $\dcr{2}{2}{X}$}
\label{sec:h22}

In this section we derive a geometric interpretation of the
integral cohomology group $\dcr{2}{2}{X}$ for a real projective
variety $X$. As a motivation we start with a geometric interpretation of the
Bredon cohomology group $\bcz{2}{2}{Y}$ of an arbitrary $\frS$-manifold $Y.$

\subsection{Variations on the theme of {$ H_{\rm Br}^{2,2}(Y,\mathbb{Z})$} }

As a $\frS$-manifold, the sphere $S^{2,2}$ is isomorphic to
$\bbp^1(\bbc)$ with the action induced by a linear involution
$\sigma$. Describing the action in homogeneous coordinates by
$\sigma \colon [x_0:x_1]\mapsto [x_0:-x_1]$, one sees that
$SP_\infty(\bbp^1(\bbc)) \equiv \bbp^\infty(\bbc)$ inherits the
linear involution $\sigma( [x_0:x_1:x_2:\cdots ]) = [x_0 : -x_1
:x_2:-x_3:\cdots ]$ and that this is an equivariant
$K(\uZ,(2,2))$; cf. \cite{dS-DT}. In particular, one can define
a linear isomorphism of tautological bundles  $\tau \colon
\sigma^*\calo(-1) \to \calo(-1)$ satisfying $\tau \circ
(\sigma^*\tau) = 1$.

Given a $\frS$-space $Y$, with involution $\sigma$, let $P_1(Y)$
denote the set of pairs $(L,\tau)$ where:
\begin{enumerate}[{\bf t1)}]
\item $L$ is a (smooth) complex line bundle on Y;
\item $\tau \colon \sigma^*L \to L$ is a bundle isomorphism;
\item $\tau\circ (\sigma^* \tau) = 1$.
\end{enumerate}
Define an equivalence relation on $P_1(L)$\ by\ $(L,\tau) \sim_1
(L',\tau')$ iff there exists an isomorphism $\phi \colon L \to L'$
such that $\phi \circ \tau = \tau' \circ \sigma^*\phi$.

\begin{lemma}
\label{lem:L1}
The tensor product of line bundles induces a group structure on the set
$\call_1(Y):= P_1(Y)/\!\sim_1$ of equivalence classes of pairs
satisfying  {t1)--t3)}. Furthermore, this group is
naturally isomorphic to $\bcz 2 2 Y$.
\end{lemma}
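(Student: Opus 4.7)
The plan is to realize both sides as parametrizing equivariant classifying data for the Eilenberg--MacLane space $\bbp^\infty(\bbc) = K(\uZ,(2,2))$ exhibited just before the statement. First I would verify that $\call_1(Y)$ is a group under tensor product: the pairing $(L,\tau) \otimes (L',\tau') := (L\otimes L', \tau\otimes\tau')$ satisfies $(\tau\otimes\tau')\circ\sigma^*(\tau\otimes\tau') = 1$, respects $\sim_1$, has neutral element the trivial bundle with its canonical equivariant structure, and admits inverses via the dual construction $(L^\vee, (\tau^{-1})^\vee)$.

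Next I would define a comparison homomorphism
$$
\Phi\colon \bcz{2}{2}{Y} = [Y_+, \bbp^\infty(\bbc)]_\frS \longrightarrow \call_1(Y), \qquad [f] \longmapsto [f^*\calo(-1),\, f^*\tau_{\rm univ}],
$$
where $\tau_{\rm univ}\colon \sigma^*\calo(-1) \to \calo(-1)$ is the canonical equivariant structure built into the linear $\frS$-action on $\bbp^\infty(\bbc)$. Well-definedness on equivariant homotopy classes follows by pulling back $(\calo(-1),\tau_{\rm univ})$ along a homotopy $Y \times I \to \bbp^\infty(\bbc)$ (with $I$ carrying the trivial $\frS$-action), and multiplicativity follows from the $H$-space structure on $\bbp^\infty(\bbc)$ induced by tensor product of line bundles (which is the one encoding the ring structure on Bredon cohomology).

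For surjectivity of $\Phi$, given $(L,\tau) \in P_1(Y)$ I would produce an equivariant classifying map. Starting from any locally finite trivializing cover by $\frS$-invariant opens, use an equivariant partition of unity to construct a countable family of sections $\{s_i\}$ generating $L$ everywhere. By symmetrizing each $s_i$ via $\tau\circ\sigma^*(-)$ I can split the family into an ``even'' collection fixed by the involution and an ``odd'' collection on which the involution acts by $-1$; reindexing these as the even and odd coordinates of $\bbp^\infty(\bbc)$ produces an equivariant map $f\colon Y \to \bbp^\infty(\bbc)$ with $(f^*\calo(-1), f^*\tau_{\rm univ}) \sim_1 (L,\tau)$. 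For injectivity, an equivalence $\phi\colon (L_0,\tau_0) \to (L_1,\tau_1)$ between the pairs classified by $f_0, f_1$ allows me to linearly interpolate between the two families of generating sections inside $\bbc^\infty$ (using $\phi$ to transport one family into the other's bundle), producing an equivariant homotopy $Y \times I \to \bbp^\infty(\bbc)$ from $f_0$ to $f_1$.

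The principal obstacle will be the surjectivity step: one must produce enough equivariant sections whose symmetry types exactly match the alternating sign pattern of the $\frS$-action on $\bbp^\infty(\bbc)$. The existence of the required equivariant partitions of unity is standard on $\frS$-manifolds, but the bookkeeping needed to ensure the resulting classifying map lands in the correct equivariant component of the mapping space, and recovers the prescribed $\tau$ rather than some twist, is the delicate part. Once this equivariant analogue of the classical classification of complex line bundles by $[-,\bbp^\infty(\bbc)]$ is in place, the isomorphism $\call_1(Y)\cong \bcz{2}{2}{Y}$ follows formally.
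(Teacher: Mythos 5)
Your proposal is correct and takes the same route as the paper: both arguments rest on the identification of $\bbp^\infty(\bbc)$ with its alternating-sign linear involution as the equivariant Eilenberg--MacLane space $K(\uZ,(2,2))$, and then classify pairs $(L,\tau)$ by equivariant homotopy classes of maps into it. The paper compresses this to a single sentence, whereas you fill in the equivariant classification argument (partition of unity, symmetrized generating sections, linear interpolation for homotopies) that the paper treats as standard; your expansion is the intended one.
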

\begin{proof}
The first assertion is clear and the last one follows from the fact
that $(\bbc\bbp^\infty, \sigma)$  with the linear involution $\sigma$
described above is an equivariant $K(\uZ,(2,2))$.
\end{proof}

Recall that a \emph{Real vector bundle} $(E,\tau)$ on a
$\frS$-manifold $(Y,\sigma)$ consists of a complex vector bundle $E$
on $Y$ together with an isomorphism $\tau \colon \overline{\sigma^* E}
\to E$ satisfying $\tau \circ {\overline{\sigma^*\tau}} = Id$.
Now, consider the set $P_2(Y)$ consisting of pairs
$(L,\mbq)$ satisfying:
\begin{enumerate}[{\bf p1)}]
\item $L$ is a (smooth) complex line bundle on $Y$;
\item $\mbq \colon L\otimes \overline{\sigma^*L} \to \bone_Y$ is
  an isomorphism of Real line bundles, where $L\otimes
  \overline{\sigma^*L}$ carries the tautological Real line bundle structure;
\end{enumerate}
Denote $(L,\mbq)\sim_2 (L',\mbq')$ iff there is an isomorphism $\phi \colon L
\to L'$ satisfying $\mbq'\circ(\phi \otimes \overline{\sigma^*\phi}) =
\mbq$ and observe that this is an equivalence relation on
$P_2(Y)$.
\begin{lemma}
\label{lem:L2}
The tensor product also induces a group structure on the set
$\call_2(Y):= P_2(Y)/\! \sim_2$ of isomorphism classes of pairs
  $(L,\mbq)$ satisfying {\bf  p1)--p2)}.
\end{lemma}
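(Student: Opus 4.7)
The plan is to define the binary operation on representatives, check it descends to $\sim_2$-classes, and then verify the group axioms. On representatives set
$$
(L,\mathbf{q}) \otimes (L',\mathbf{q}') \ :=\ \bigl(L\otimes L',\ \mathbf{q}\boxtimes \mathbf{q}'\bigr),
$$
where $\mathbf{q}\boxtimes \mathbf{q}'$ is the composite
$$
(L\otimes L')\otimes \overline{\sigma^*(L\otimes L')} \xrightarrow{\ \cong\ }
\bigl(L\otimes \overline{\sigma^*L}\bigr)\otimes \bigl(L'\otimes \overline{\sigma^*L'}\bigr)
\xrightarrow{\ \mathbf{q}\otimes \mathbf{q}'\ } \mathbf{1}_Y\otimes \mathbf{1}_Y \ \cong\ \mathbf{1}_Y ,
$$
using the canonical isomorphism $\overline{\sigma^*(L\otimes L')}\cong \overline{\sigma^*L}\otimes \overline{\sigma^*L'}$ and the symmetry of tensor product. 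One must check that the first arrow is an isomorphism of Real line bundles, i.e.\ that the tautological Real structure on $(L\otimes L')\otimes \overline{\sigma^*(L\otimes L')}$ is carried to the tensor of the tautological Real structures on the two factors on the right; this is a direct check from the definition of the tautological structure.

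Well-definedness on $\sim_2$-classes follows from functoriality: given compatibilities $\mathbf{q}_1\circ(\phi\otimes \overline{\sigma^*\phi})=\mathbf{q}$ and $\mathbf{q}_1'\circ(\phi'\otimes \overline{\sigma^*\phi'})=\mathbf{q}'$, the isomorphism $\phi\otimes \phi'\colon L\otimes L'\to L_1\otimes L_1'$ satisfies the analogous identity for $\mathbf{q}\boxtimes \mathbf{q}'$ and $\mathbf{q}_1\boxtimes \mathbf{q}_1'$, since all the natural isomorphisms used above are functorial in $L,L'$. Associativity and commutativity of $\otimes$ descend from associativity and commutativity of the tensor product of line bundles, combined with naturality of pull-back and conjugation. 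The identity element is represented by $(\mathbf{1}_Y, m)$, where $m\colon \mathbf{1}_Y\otimes \overline{\sigma^*\mathbf{1}_Y}\to \mathbf{1}_Y$ is the canonical trivialization compatible with the standard Real structure on $\mathbf{1}_Y$.

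For inverses I take the candidate $(L^\vee,\mathbf{q}^\dagger)$, where $\mathbf{q}^\dagger\colon L^\vee\otimes \overline{\sigma^*L^\vee}\to \mathbf{1}_Y$ is obtained from $\mathbf{q}^{-1}$ by dualization and the canonical identification $\overline{\sigma^*L^\vee}\cong (\overline{\sigma^*L})^\vee$. Under the evaluation isomorphism $\mathrm{ev}\colon L\otimes L^\vee\xrightarrow{\cong}\mathbf{1}_Y$, one verifies that $\mathbf{q}\boxtimes \mathbf{q}^\dagger$ is carried to $m$, i.e.\ that $\mathrm{ev}$ realizes $(L\otimes L^\vee,\mathbf{q}\boxtimes \mathbf{q}^\dagger)\sim_2 (\mathbf{1}_Y,m)$. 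The main obstacle is exactly this verification: tracking the Real structures and signs through the chain of canonical identifications between $\overline{\sigma^*(-)}$, $(-)^\vee$, and $\otimes$. Once the naturality of conjugation and pull-back with respect to duals and tensor products is recorded as a lemma, the axioms follow formally and the operation is well-defined.
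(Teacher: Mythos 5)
The paper gives no proof of this lemma (it is treated as routine, following the same pattern as Lemma~\ref{lem:L1}, whose proof simply says ``the first assertion is clear''), so your task is to fill in what the authors consider self-evident, and you do so correctly: the tensor product $\mbq\boxtimes\mbq'$ you define, the descent to $\sim_2$-classes via naturality, and the verification that associativity, commutativity, and the unit all descend from the corresponding properties of line-bundle tensor products are all the right moves. The one place you leave a genuine computation undone is the inverse axiom, where you flag the bookkeeping through $\overline{\sigma^*(-)}$, $(-)^\vee$ and $\otimes$ as ``the main obstacle.'' That bookkeeping can be avoided entirely by a better choice of representative for the inverse: instead of $(L^\vee,\mbq^\dagger)$, take $(\overline{\sigma^*L},\,\mbq\circ\mathrm{sw})$, where $\mathrm{sw}$ is the symmetry swapping the two factors. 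Since $\overline{\sigma^*\overline{\sigma^*L}}=L$ canonically, this pair is again of type p1)--p2), and the underlying bundle of $(L,\mbq)\otimes(\overline{\sigma^*L},\,\mbq\circ\mathrm{sw})$ is $L\otimes\overline{\sigma^*L}$, for which $\mbq$ itself is an isomorphism of Real line bundles onto $\bone_Y$; that $\mbq$ realizes the required $\sim_2$-equivalence with the unit $(\bone_Y,m)$ is then a one-line check, with no dualization or sign-chasing needed. With that substitution your argument is complete and matches the route the paper intends.
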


Finally, consider the complex of sheaves $G^0\xrightarrow{a} G^1$ on
$\gman$ where
\begin{equation}
\label{eq:G0}
G^0(U)  = \{ f \colon U \to \bbc^\times \mid f \text{ is smooth }   \}
\end{equation}
and
\begin{equation}
\label{eq:G1}
G^1(U)  = \{ f \colon U \to \bbc^\times \mid f \text{ is smooth and
  equivariant}   \} ,
\end{equation}
where $a \colon G^0 \to G^1$ is the ``transfer map'' $a(f) = f
\cdot \overline{\sigma^*f}.$

\begin{proposition}
\label{prop:h22br}
There are natural isomorphisms
$$
\bcz{2}{2}{Y} \cong H^{2,2}_\text{bor}(Y,\uZ) \cong \call_1(Y) \cong
\call_2(Y) \cong \bbh^1(Y_{\text{eq}}; G^0\to G^1).
$$
\end{proposition}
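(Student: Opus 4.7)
The plan is to establish each of the four isomorphisms separately, using classifying-space identifications, \v{C}ech unraveling, the exponential sequence, and an averaging construction. The first, $\bcz{2}{2}{Y}\cong\call_1(Y)$, is Lemma~\ref{lem:L1}, already proved by recognising $(\bbc\bbp^\infty,\sigma)$ as an equivariant Eilenberg--MacLane space of type $K(\uZ,(2,2))$ whose tautological line bundle $\calo(-1)$ with its canonical involution represents the functor $\call_1$.

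For $\call_2(Y)\cong\bbh^1(Y_{\text{eq}};G^0\to G^1)$ I will unwind the \v{C}ech description of hypercohomology. A 1-hypercocycle on a $\frS$-invariant cover $\{U_\alpha\}$ is a pair $(g_{\alpha\beta},h_\alpha)$ with $g_{\alpha\beta}\in G^0(U_{\alpha\beta})$ and $h_\alpha\in G^1(U_\alpha)$, satisfying $\check\delta g=0$ and $h_\alpha h_\beta^{-1}=a(g_{\alpha\beta})=g_{\alpha\beta}\overline{\sigma^*g_{\alpha\beta}}$. The first equation presents the $g_{\alpha\beta}$ as transition data for a complex line bundle $L$, while the second, together with the equivariance of the $h_\alpha$, exhibits $(h_\alpha)$ as a Real trivialization $\mbq$ of the Real line bundle $L\otimes\overline{\sigma^*L}$. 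The 1-coboundary relation matches $\sim_2$ precisely, so passage to classes yields the natural bijection.

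The isomorphism $\bcz{2}{2}{Y}\cong H^{2,2}_\text{bor}(Y,\uZ)$ follows, via the previous step, from the classical identification of equivariant complex line bundles with $H^2_\frS(Y;\bbz)=H^2(Y_{h\frS};\bbz)$, obtained from the exponential sequence $0\to\bbz\to\bbc\to\bbc^\times\to 1$ with trivial $\frS$-action; since $\bbz(2)=\bbz$ also carries trivial $\frS$-action, this is precisely $H^{2,2}_\text{bor}(Y,\uZ)$.

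The main obstacle is the remaining identification $\call_1(Y)\cong\call_2(Y)$, whose two sides carry genuinely different data. I will construct $\Phi\colon\call_1(Y)\to\call_2(Y)$ by averaging any hermitian metric on $L$ over $\frS$ to produce a $\frS$-invariant hermitian metric $h$, and setting $\mbq(\ell\otimes\overline{m}) := h(\ell,\tau(\overline{m}))$, where $\tau\colon\sigma^*L\to L$ is used to identify $\overline{L_{\sigma(y)}}$ with $\overline{L_y}$. The relation $\tau\circ\sigma^*\tau=\mathrm{id}$ combined with $\frS$-invariance of $h$ makes $\mbq$ a Real isomorphism to $\bone_Y$; any two $\frS$-invariant hermitian metrics differ by a positive equivariant function which is absorbed into an automorphism of $L$, so $\Phi$ descends to $\sim_1$-classes. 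An inverse is defined analogously by using a chosen metric to extract $\tau$ from $\mbq$, and compatibility with tensor products follows from naturality. A more conceptual alternative, bypassing these choices, is to equip $(\bbc\bbp^\infty,\sigma)$ with the natural Real trivialization of $\calo(-1)\otimes\overline{\sigma^*\calo(-1)}$ coming from the Fubini--Study metric and show directly that this also represents $\call_2(-)$, after which $\call_1\cong\call_2$ follows from Yoneda.
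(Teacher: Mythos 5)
Your proposal is correct and follows the paper's decomposition step by step; there is one place where you take a genuinely different route, and one place where you usefully sharpen the paper's argument. For $\bcz{2}{2}{Y}\cong H^{2,2}_\text{bor}(Y,\uZ)$ the paper invokes the general comparison map $\bca{n}{p}{Y}\to\bcabor{n}{p}{Y}$ (Proposition~\ref{prop:bor-bre}, proved by observing that $\bbz_0(S^{p,p})\to F(E\frS,\bbz_0(S^{p,p}))$ is an equivariant equivalence, so Bredon and Borel theories agree in the range $n\le p$); you instead route through line bundles, identifying $\call_1(Y)$ with $\frS$-equivariant line bundles and using the exponential sequence on $E\frS\times_\frS Y$ together with the triviality of the $\frS$-action on $\bbz(2)$. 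Both are valid; the paper's argument is structural and dispenses with the intermediate identification, whereas yours is more elementary but handles only this bidegree. For $\call_1(Y)\cong\call_2(Y)$ your construction is the paper's ($\mbq=h\circ(1\otimes\overline{\tau})$, with $\overline{\tau}\colon\overline{\sigma^*L}\to\overline{L}$), but you explicitly average the hermitian metric over $\frS$ (with respect to the equivariant structure $\tilde\sigma$ induced by $\tau$) before forming $\mbq$. This averaging is in fact needed: unwinding the Real condition $\overline{\mbq_{\sigma(y)}(m\otimes\overline{\ell})}=\mbq_y(\ell\otimes\overline{m})$ reduces, via $\tau_{\sigma(y)}\circ\tau_y=1$, exactly to $\frS$-invariance of $h$, which the paper's phrase ``pick a hermitian metric'' leaves implicit. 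Your Yoneda alternative (represent $\call_2$ by $(\bbc\bbp^\infty,\sigma)$ equipped with the Fubini--Study Real trivialization of $\calo(-1)\otimes\overline{\sigma^*\calo(-1)}$) is also a clean, choice-free variant. The \v{C}ech unraveling of $\bbh^1(Y_{\text{eq}};G^0\to G^1)$ matches the paper's ``tautology.''
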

\begin{proof}
The first two isomorphisms follow from Lemma \ref{lem:L1} and
Proposition \ref{prop:bor-bre}, respectively, and the last
isomorphism is a tautology.

Given $(L,\tau) \in P_1(Y)$, pick a hermitian metric $h \colon
L\otimes \overline{L} \to \bone_Y$ on $L$ and define $\mbq_\tau^h \colon
L\otimes \overline{\sigma^* L} \to \bone_Y $ as the composition
$L\otimes \overline{\sigma^* L} \xrightarrow{1\otimes \overline{\tau}}
L\otimes \overline{L} \to \bone_Y ,
$
where $\overline{\tau} \colon \overline{\sigma^* L} \to \overline{L}$
is the map induced by $\tau$. It is easy to see that $\mbq_\tau^h$ is
an isomorphism of Real line bundles, and hence we obtain an element
$(L,\mbq_\tau^h) \in P_2(Y)$.

Suppose that $\psi \colon L' \to L$ induces an equivalence
$(L',\tau') \sim_1 (L,\tau)$ and pick a hermitian metric $h$ on
$L$. One sees that $(L',\mbq_{\tau'}^{\psi^* h})
\sim_2 (L,\mbq_\tau^h)$. It follows that
one has a well-defined homomorphism $\call_1(Y) \to \call_2(Y)$
sending $[L,\tau]$ to $\la L, \mbq_\tau^h \ra$, where $h$ is any
choice of metric on $L$. The construction of the inverse homomorphism is evident.
\end{proof}

Let $S=\pi_0(Y^\frS)$ denote the set of connected components of
the fixed point set $Y^\frS$, and identify $H^0(Y^\frS;\bbz^\times) \equiv
(\bbz^\times)^S$. Observe that the K\"unneth formula yields a
natural isomorphism $H^{2,2}_\text{bor}(Y^\frS;\uZ)\cong
H^2(Y^\frS \times B\frS;\bbz(2)) \cong H^0(Y^\frS;\bbz^\times)
\oplus H^2_{\text{sing}}(Y^\frS;\bbz(2))$. Using the first
identification in Proposition \ref{prop:h22br}  one considers the
composition
$$\bcz{2}{2}{Y} \equiv
H^{2,2}_\text{bor}(Y;\uZ)\to H^{2,2}_\text{bor}(Y^\frS;\uZ)
\to H^0(Y^\frS;\bbz^\times)
$$
of the restriction map followed by the evident projection to
obtain a natural homomorphism
\begin{equation}
\label{eq:sig}
 \sig \colon \bcz{2}{2}{Y} \to ({\bbz^\times})^S .
\end{equation}

This map has a natural geometric interpretation when one uses the
identification $\bcz{2}{2}{Y}\equiv \call_2(Y)$. Given $\la L, \mbq
\ra \in \call_2(Y)$, the restriction of $\mbq$ to $L|_{Y^\frS}$\
becomes a non-degenerate hermitian pairing, and hence it has a
well-defined signature $\sig_{\la L, \mbq \ra} \in
(\bbz^\times)^S$. It is easy to see that this is another description
of \eqref{eq:sig}.

\begin{definition}
\label{def:sig}
We call  $\sig \colon \bcz{2}{2}{Y} \to ({\bbz^\times})^S$
the \emph{equivariant signature map} of $Y$. The image
$\sigtor(Y)\subseteq (\bbz^\times)^S$ of the torsion subgroup
$\bcz{2}{2}{Y}_\text{tor}$ under $\sig$ is called the
\emph{equivariant signature group} of $Y.$ In the case where
$Y=X(\bbc)$ for a real algebraic variety $X$ with $S =\pi_0( X(\bbr)
)$, we denote the equivariant signature group of $X(\bbc)$ simply by
$\sigtor(X)$.
\end{definition}

\begin{example}
\label{ex:cur22}
When $X$ is a projective algebraic curve, it follows from the cohomology
sequence of the pair $(E\frS\times_\frS X(\bbc), B\frS \times
X(\bbr))$  that $\sig$ is an isomorphism. As a consequence, one
obtains isomorphisms $\bcz{2}{2}{X(\bbc)}\cong Br(X) \cong
\sigtor(X),$ where $Br(X)$ is the Brauer group of $X$,  since
$Br(X)\cong (\bbz^\times)^S$ when $X$ is an algebraic curve;
cf. \cite{Witt}.
\end{example}

\subsection{The Deligne group $\dcr{2}{2}{X}$}

The Hodge filtration on singular cohomology induces a filtration
on Bredon cohomology, with $$F^j\bcz{n}{p}{X(\bbc)} :=
\varphi^{-1}\jmath^{-1}F^jH^n_{\text{sing}}(X(\bbc),\bbc);$$ see
diagram \eqref{eq:forget} for notation.

\begin{proposition}
\label{prop:aleph}
For any smooth real projective variety $X,$ one has
$$
F^2\bcz{2}{2}{X(\bbc)}=\bcz{2}{2}{X(\bbc)}_{tor}=
\operatorname{im}(\varrho),$$ where $\varrho$ is the cycle map
from Deligne to Bredon cohomology \eqref{eq:forget}. In particular
the image of the composition $$\Psi \colon \dcr{2}{2}{X}
\xrightarrow{\varrho} \bcz{2}{2}{X(\bbc)} \xrightarrow{\sig}
H^0(X(\bbr),\bbz^\times)$$ is the equivariant signature group
$\sigtor(X).$
\end{proposition}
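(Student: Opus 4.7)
The plan is to prove three statements in sequence: (a) $\operatorname{im}(\varrho)=F^2\bcz{2}{2}{X(\bbc)}$, which is a direct unwinding of the long exact sequence; (b) $F^2\bcz{2}{2}{X(\bbc)}=\bcz{2}{2}{X(\bbc)}_{\text{tor}}$, which is the substantive Hodge-theoretic content; and (c) the statement on $\operatorname{im}(\Psi)$, which is formal from (a), (b), and Definition \ref{def:sig}. For (a), specializing the long exact sequence of Proposition \ref{prop:change}(iii) to $(j,p)=(2,2)$ and reading off diagram \eqref{eq:forget}, a class $\alpha\in\bcz{2}{2}{X(\bbc)}$ lies in $\operatorname{im}(\varrho)$ iff there exists $\beta\in F^2H^2(X(\bbc);\bbc)^\frS$ with $\jmath\varphi(\alpha)=\imath(\beta)$; since $\imath$ is the inclusion of the Hodge filtration, this is precisely $\alpha\in F^2\bcz{2}{2}{X(\bbc)}$.

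For (b), the inclusion $\bcz{2}{2}{X(\bbc)}_{\text{tor}}\subseteq F^2\bcz{2}{2}{X(\bbc)}$ is immediate, since a torsion class maps to $0\in F^2$. Conversely, suppose $\alpha\in F^2\bcz{2}{2}{X(\bbc)}$, so $\jmath\varphi(\alpha)\in F^2H^2(X(\bbc);\bbc)^\frS=H^{2,0}(X(\bbc))^\frS$. Because $\bbz(2)=(2\pi i)^2\bbz\subset\bbr$, the class $\jmath\varphi(\alpha)$ already lies in $H^2(X(\bbc);\bbr)$; but $H^{2,0}\cap H^2(X(\bbc);\bbr)=0$, since a real class of pure type $(2,0)$ would equal its complex conjugate and thus also lie in $H^{0,2}$. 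Hence $\jmath\varphi(\alpha)=0$, so $\varphi(\alpha)$ is torsion in $H^2(X(\bbc);\bbz(2))^\frS$. To promote this to $\alpha$ being torsion I use Proposition \ref{prop:h22br} to identify $\bcz{2}{2}{X(\bbc)}\cong H^{2,2}_{\text{bor}}(X(\bbc);\uZ)\cong H^2(E\frS\times_\frS X(\bbc);\bbz)$, under which $\varphi$ becomes the edge map of the Leray--Serre spectral sequence of $E\frS\times_\frS X(\bbc)\to B\frS$. The kernel of this edge map is filtered by subquotients of $H^p(\frS;H^{2-p}(X(\bbc);\bbz))$ for $p\geq 1$, each annihilated by $|\frS|=2$, so $\ker(\varphi)$ is $2$-torsion. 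Thus if $n\varphi(\alpha)=0$ then $n\alpha\in\ker(\varphi)$ satisfies $2n\alpha=0$, and $\alpha$ is torsion.

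Step (c) is then formal: combining (a), (b), and Definition \ref{def:sig},
$$\operatorname{im}(\Psi)=\sig(\operatorname{im}(\varrho))=\sig\bigl(\bcz{2}{2}{X(\bbc)}_{\text{tor}}\bigr)=\sigtor(X).$$
The main obstacle will be the final step of (b), passing from ``$\varphi(\alpha)$ is torsion'' to ``$\alpha$ is torsion''; this is where the Borel-cohomology presentation of $\bcz{2}{2}{-}$ from Proposition \ref{prop:h22br} and the vanishing of rationalized higher group cohomology of $\frS$ enter essentially. Everything else is either a tautology from the cone sequence or the classical fact that real Betti classes of pure Hodge type $(p,q)$ with $p\neq q$ vanish.
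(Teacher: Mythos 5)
Your proposal is correct and follows essentially the same route as the paper's proof: the cone long exact sequence of \eqref{eq:forget} gives $\operatorname{im}(\varrho)=F^2\bcz{2}{2}{X(\bbc)}$, the Hodge-theoretic vanishing of real classes of pure type $(2,0)$ forces $\jmath\circ\varphi$ to kill $F^2$, and the identification $\bcz{2}{2}{X(\bbc)}\cong H^{2,2}_{\mathrm{bor}}(X(\bbc);\uZ)$ is what makes $\varphi$ injective modulo torsion (the paper phrases this as injectivity of $\varphi\otimes\bbq$ via ``well-known facts in equivariant cohomology,'' while you unpack it through the Leray--Serre spectral sequence). The only slip is that $\ker(\varphi)$ is bounded torsion, killed by $4$ rather than $2$, since it carries a two-step Leray--Serre filtration whose graded pieces are each $2$-torsion; this is harmless, as the argument only needs $\ker(\varphi)$ to be torsion.
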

\begin{proof}
Consider diagram \eqref{eq:forget} with $p=n=2$. Since the middle
row is exact, one concludes that $\operatorname{im}{(\varrho)} =
F^2\bcz{2}{2}{X},$ by definition. On the other hand, $\jmath \circ
\varphi \colon \bcz{2}{2}{X(\bbc)} \to H^2(X(\bbc);\bbc)$ factors
as $ \bcz{2}{2}{X(\bbc)} \to
\bcz{2}{2}{X(\bbc)}/\operatorname{tor} \hookrightarrow
\bcz{2}{2}{X(\bbc)}\otimes \bbq \hookrightarrow
H^2_{\operatorname{sing}}(X(\bbc);\bbz(2))\otimes \bbq
\hookrightarrow H^2(X(\bbc);\bbc). $  The injectivity of
$\bcz{2}{2}{X(\bbc)}\otimes \bbq \hookrightarrow
H^2_{\operatorname{sing}}(X(\bbc);\bbz(2))\otimes \bbq $ follows
from the isomorphism $\bcz{2}{2}{X(\bbc)} \cong H^{2,2}_{\rm
bor}(X(\bbc);\uZ)$ and well-known facts in equivariant cohomology.
Since $$H^2_{\operatorname{sing}}(X(\bbc);\bbz(2))\otimes \bbq
\cap F^2H^2(X(\bbc);\bbc) = 0,$$ one concludes that
$$F^2\bcz{2}{2}{X(\bbc)} =
\bcz{2}{2}{X(\bbc)}_{\operatorname{tor}}.$$
\end{proof}

The next goal is to provide a geometric interpretation to the
kernel of the surjection $\Psi \colon \dcr{2}{2}{X} \to
\sigtor(X)$. To this purpose, consider triples $(L,\nabla,\mbq)$
where $L$ is a holomorphic line bundle over $X(\bbc)$, $\nabla$ is
a holomorphic connection on $L$ and $\mbq \colon L\otimes
\overline{\sigma^*L}\to \bone $ is a holomorphic isomorphism of
Real line bundles satisfying the following properties:

\begin{enumerate}
\item The restriction of ${\mbq}$ to $X(\bbr)$ is a
positive-definite hermitian metric on $L_{|X(\bbr)}.$

\item As a section of $(L\otimes \overline{\sigma^*L})^\vee $,
$\mbq$ is parallel with respect to the connection induced by
$\nabla.$
\end{enumerate}

A morphism between two such triples $f \colon (L,\nabla,\mbq)\to
(L',\nabla',\mbq')$ consists of a line bundle map $f \colon L \to
L'$ such that ${\mbq}'\circ(f\otimes \overline{\sigma^*f})
= {\mbq}$ and $\nabla'\circ f = (1\otimes f)\circ \nabla.$

\begin{definition}
\label{def:rlbc}
Given a real variety $X,$ let $\rlbc{X}$ denote the set of
isomorphism classes $\la L,\nabla,\mbq\ra$ of triples  as above.
This is a group under the operation
$$
\la L,\nabla,\mbq\ra\odot \la L',\nabla',\mbq'\ra := \la L\otimes
L', \nabla\otimes 1 + 1 \otimes \nabla', \mbq\cdot \mbq'\ra,
$$
which we call \emph{the differential Picard-Witt group} of
$X$.
\end{definition}
\bigskip

The group $\rlbc{X}$ has an alternative definition in terms of a
complex $\calp^* : \calp^0 \xrightarrow{D} \calp^1 \xrightarrow{D}
\calp^2$ of presheaves on $\rav$. Given a real analytic variety
$U$, define
$$
\calp^0(U) := \calo^\times_{\bbc} (U) = \{ f \colon U \to \bbc^\times
\mid f \text{ is holomorphic } \};
$$
and
$$
\calp^1(U) := \Omega^1_{\bbc}(U) \oplus \calo^\times_{\bbr_+}(U) ,
$$
where $\Omega^1_\bbc(U)$ denotes the holomorphic $1$-forms on $U$
and $\calo^\times_{\bbr_+}(U)$ denotes the subgroup of
$\calo^\times_\bbc(U)$ consisting of those holomorphic Real
functions $f$ (i.e. $\overline{\sigma^* f} = f$) which are
positive on the \emph{real locus} $U(\bbr):= U^\frS$. Finally,
define $\calp^2(U)$ as the group of Real holomorphic $1$-forms on
$U$, in other words
$$
\calp^2(U) := \Omega^1_\bbr(U) := \{ \psi \in \Omega^1_\bbc(U) \mid
\overline{\sigma^* \psi} = \psi \}.
$$
Define $D \colon \calp^0(U)\to \calp^1(U)$ as $D(g) := ( {dg}/{g},\
g\cdot \overline{\sigma^* g})$ and
$D \colon \calp^1(U)\to \calp^2(U)$ as $D(\psi,f) := \psi +
\overline{\sigma^* \psi} - {df}/{f}.$

\begin{proposition}
\label{prop:rlbc}
$\rlbc{X}$ is naturally isomorphic to
$\check{\bbh}^1(X_{eq};\calp^*)$.
\end{proposition}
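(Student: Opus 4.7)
The plan is to set up an explicit cocycle correspondence between triples $\la L,\nabla,\mbq\ra$ representing classes in $\rlbc{X}$ and \v{C}ech $1$-hypercocycles of the complex $\calp^*$ relative to an $\frS$-invariant open cover.

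First I would pick an $\frS$-invariant open cover $\{U_\alpha\}$ of $X$ on which $L$ trivialises, together with nowhere-zero holomorphic local sections $s_\alpha$ of $L|_{U_\alpha}$ (not assumed equivariant), and extract three pieces of local data: transition functions $g_{\alpha\beta}\in\calp^0(U_{\alpha\beta})$ by $s_\alpha = g_{\alpha\beta}\,s_\beta$; connection one-forms $\psi_\alpha\in\Omega^1_\bbc(U_\alpha)$ by $\nabla s_\alpha = \psi_\alpha\otimes s_\alpha$; and the values $f_\alpha := \mbq\bigl(s_\alpha\otimes \overline{\sigma^* s_\alpha}\bigr)$. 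The section $s_\alpha\otimes \overline{\sigma^*s_\alpha}$ is Real with respect to the tautological Real structure on $L\otimes\overline{\sigma^*L}$ (which swaps the two factors), so $f_\alpha$ is a Real function, and the assumption that $\mbq$ restricts to a positive-definite hermitian metric on $X(\bbr)$ forces $f_\alpha\in\calo^\times_{\bbr_+}(U_\alpha)$. Hence $(\psi_\alpha,f_\alpha)\in\calp^1(U_\alpha)$.

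Next I would verify that $\bigl((g_{\alpha\beta}),(\psi_\alpha,f_\alpha)\bigr)$ is a $1$-hypercocycle in the \v{C}ech bicomplex of $\calp^*$. The \v{C}ech cocycle identity $\delta(g_{\alpha\beta})=0$ is standard. The middle identity $D(g_{\alpha\beta}) = (\psi_\alpha,f_\alpha)-(\psi_\beta,f_\beta)$ has first coordinate $dg_{\alpha\beta}/g_{\alpha\beta} = \psi_\alpha - \psi_\beta$, the usual transformation law for connection forms, and second coordinate $g_{\alpha\beta}\cdot\overline{\sigma^* g_{\alpha\beta}} = f_\alpha/f_\beta$, obtained by substituting $s_\alpha = g_{\alpha\beta} s_\beta$ into the definition of $f_\alpha$. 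The outer identity $D(\psi_\alpha,f_\alpha) = 0$, i.e.\ $\psi_\alpha + \overline{\sigma^*\psi_\alpha} = df_\alpha/f_\alpha$, is precisely the parallelism of $\mbq$, expressed in the trivialisation $s_\alpha\otimes\overline{\sigma^*s_\alpha}$, with respect to the connection induced by $\nabla$ on $L\otimes\overline{\sigma^*L}$.

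For the reverse direction, starting from a $1$-hypercocycle I would rebuild $L$ by gluing trivial bundles via the $g_{\alpha\beta}$, define $\nabla$ locally by $\nabla s_\alpha := \psi_\alpha\otimes s_\alpha$ (the middle cocycle identity makes this globally consistent), and define $\mbq$ by $\mbq(s_\alpha\otimes\overline{\sigma^*s_\alpha}) := f_\alpha$; the Real structure on $\mbq$ is built into the requirement $f_\alpha\in\calo^\times_{\bbr_+}$, positive-definiteness on $X(\bbr)$ is automatic, and $D(\psi_\alpha,f_\alpha) = 0$ supplies parallelism. A \v{C}ech $0$-cochain $(h_\alpha)\in\check{C}^0(\calp^0)$ has coboundary $(h_\alpha/h_\beta,\,Dh_\alpha)$, which corresponds precisely to a change of local section $s_\alpha\leadsto h_\alpha s_\alpha$ and hence to an isomorphism of triples; refinement of covers is handled in the usual way by passing to the direct limit. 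Finally, tensor product of triples corresponds termwise to the product of cocycles, so the bijection is an isomorphism of groups.

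The main obstacle is the careful bookkeeping for the outermost cocycle relation $D(\psi_\alpha,f_\alpha)=0$: one must track the induced connection on $\overline{\sigma^*L}$ (whose local connection form, relative to $\overline{\sigma^* s_\alpha}$, is $\overline{\sigma^*\psi_\alpha}$) together with the trivial connection on $\bone$, and then translate the condition $\nabla\mbq = 0$ for $\mbq$ viewed as a section of $(L\otimes\overline{\sigma^*L})^\vee$ into the pointwise identity $\psi_\alpha + \overline{\sigma^*\psi_\alpha} = df_\alpha/f_\alpha$. Once this is in place, every other verification is a routine \v{C}ech manipulation.
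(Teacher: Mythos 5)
Your proposal is correct and is precisely the intended (and only reasonable) argument behind the paper's one-line "This is straightforward"; it is also exactly the local computation codified immediately afterward in Remark~\ref{rem:cocycle1}, except that the paper normalises the second cochain component to $f_\alpha\equiv 1$ (which is possible since each $f_\alpha$ admits a positive Real square root) whereas you carry the general $f_\alpha\in\calo^\times_{\bbr_+}$, a harmless and in fact slightly cleaner choice. Your bookkeeping of the Real structure on $s_\alpha\otimes\overline{\sigma^*s_\alpha}$, the derivation of the three hypercocycle identities, and the parallelism identity $\psi_\alpha+\overline{\sigma^*\psi_\alpha}=df_\alpha/f_\alpha$ are all correct.
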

\begin{proof}
This is straightforward.
\end{proof}

\begin{remark}
\label{rem:cocycle1}
Given $\la L, \nabla, \mbq \ra$ in $\rlbc{X}$ one can always find an
equivariant cover $\underline{\calu}$ and a cocycle $\mbc$ in
$\check{C}^1(\underline{\calu},\calp^*)$ representing $\la L, \nabla,
\mbq \ra$ that has the form
$\mbc = ( (g_{\alpha_0\alpha_1}) \ ; \ \left( \ (\psi_{\alpha_0}),\ (
1 ) \ \right) )$. In other words, $[\mbc]$ is determined by
$(g_{\alpha_0\alpha_1}), (\psi_{\alpha_0})$ satisfying:
\begin{description}
\item[i] $g_{\alpha_0\alpha_1} \in \Omega^0_\bbc(U_{\alpha_0\alpha_1})$ and $\delta
  (g_{\alpha_0\alpha_1}) = 1$ (\emph{gives the cocycle condition for a holomorphic
  line bundle $L$});
\item[ii] $\left( \frac{dg_{\alpha_0\alpha_1}}{g_{\alpha_0\alpha_1} }
  \right) = \delta ( \psi_{\alpha_0} )$ (\emph{gives the holomorphic connection on $L$});
\item[iii] $g_{\alpha_0\alpha_1} \cdot
  \overline{\sigma^*g_{\alpha_0\alpha_1}} = 1 $ (\emph{gives the
  hermitian form $\mbq$});
\item[iv] $\psi_{\alpha_0} \in \Omega^1_\bbc(U)$ and
  $\psi_{\alpha_0} + \overline{\sigma^* \psi_{\alpha_0}} = 0$,
  i.e. $\psi_{\alpha_0}$ is a holomorphic anti-invariant $1$-form
  (\emph{$\mbq$ is parallel}).
\end{description}
\end{remark}
The main result of this section is the following.

\begin{theorem}
\label{thm:h22}
If $X$ is a smooth real projective variety then  one has a natural
short exact sequence
$$
0\to \rlbc{X} \to \dcr{2}{2}{X} \xrightarrow{\Psi} \sigtor(X) \to 0.
$$
\end{theorem}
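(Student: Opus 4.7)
The plan is to combine Proposition~\ref{prop:aleph}, which already gives the surjectivity of $\Psi$ onto $\sigtor(X)$, with an explicit cocycle-level construction of a natural homomorphism $\Phi \colon \rlbc{X} \to \dcr{2}{2}{X}$ whose image I will show is exactly $\ker \Psi$. Throughout, I would work with \v{C}ech hypercohomology on a sufficiently fine equivariant good cover $\underline{\calu}$ of $X(\bbc)$, using the description of $\rlbc{X}$ as $\check{\bbh}^1(X_{\mathrm{eq}};\calp^*)$ from Proposition~\ref{prop:rlbc} and the description of $\dcr{2}{2}{X}$ as $\check{\bbh}^2(X_{\mathrm{eq}};\dZ(2))$ coming from the cone \eqref{eq:del_comp}.

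The construction of $\Phi$ starts from a cocycle $(g_{\alpha\beta},\psi_\alpha)$ for $\la L,\nabla,\mbq\ra$ satisfying conditions (i)--(iv) of Remark~\ref{rem:cocycle1}. From these data I would build a total-degree-$2$ cocycle in $\dZ(2)^* = \brZ(2)^* \oplus F^2\cale^* \oplus \cale^{*-1}$ as follows: the Bredon component is assembled from equivariant choices of local branches of $\log g_{\alpha\beta}$ together with $2\pi i$-corrections of the cocycle condition, using $g_{\alpha\beta}\cdot \overline{\sigma^* g_{\alpha\beta}}=1$ to produce the transfer identities that characterize $\brZ(2)$ as a Mackey-coend construction; the Hodge component is the holomorphic $2$-form $d\psi_\alpha$, which is automatically of type $(2,0)$ and $\frS$-invariant by condition (iv); and the $\cale^*$-component is prescribed by the cone differential $\tau_2 - \imath$, which closes up precisely because of conditions (ii) and (iv). Well-definedness on isomorphism classes is a routine unwinding of the equivalence relation on triples.

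I would then verify that $\Phi$ lands in $\ker\Psi$ and identify its image. Through the isomorphism $\bcz{2}{2}{X(\bbc)} \cong \call_2(X(\bbc))$ of Proposition~\ref{prop:h22br}, the class $\varrho\circ\Phi(\la L,\nabla,\mbq\ra)$ corresponds to the underlying smooth pair $(L,\mbq)$; property~(1) of Definition~\ref{def:rlbc} forces $\mbq|_{X(\bbr)}$ to be positive definite on every component, hence its signature is $+1$ everywhere and $\Psi\circ\Phi = 0$. For surjectivity onto $\ker\Psi$, start with $\eta\in\ker\Psi$: by Proposition~\ref{prop:aleph}, $\varrho(\eta)\in\bcz{2}{2}{X(\bbc)}_{\mathrm{tor}}$ corresponds to a pair $(L,\mbq)$ with trivial signature, which (after twisting by locally constant units on real components) can be taken positive definite on $X(\bbr)$. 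The $F^2$-piece of $\nu(\eta)$ then equips $L$ with a holomorphic structure making $\mbq$ a holomorphic Real isomorphism, while the $\cale^1$-piece of a \v{C}ech representative of $\eta$ supplies a holomorphic connection $\nabla$ for which $\mbq$ is parallel; this yields a preimage in $\rlbc{X}$. Injectivity of $\Phi$ comes by reversing the procedure: a \v{C}ech coboundary trivializing $\Phi(\la L,\nabla,\mbq\ra)$ unpacks into an isomorphism of $(L,\nabla,\mbq)$ with the trivial triple.

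The chief obstacle is the bookkeeping at the \v{C}ech level, in two respects. First, the Mackey-coend structure underlying $\brZ(2)$ must be matched precisely with the constraint $g_{\alpha\beta}\cdot \overline{\sigma^*g_{\alpha\beta}}=1$: this requires a careful choice of equivariant $\log$-branches and a verification that the resulting $\tau_2$-image indeed equals the form side up to the prescribed boundary. Second, in the surjectivity step, one must promote smooth data with trivial signature to holomorphic data compatible with $\nabla$; this uses simultaneously that $\nu(\eta)$ lives in $F^2\cale^2$, an equivariant $\overline{\partial}$-Poincar\'e argument analogous to the exponential quasi-isomorphism of Corollary~\ref{cor:qi}, and the positivity adjustment which is possible \emph{exactly because} the signature obstruction has been assumed to vanish. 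It is precisely this last point which explains why the cokernel of the sequence is $\sigtor(X)$ rather than the larger $H^0(X(\bbr),\zs)$.
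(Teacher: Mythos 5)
Your proposal follows essentially the same strategy as the paper's Appendix~\ref{app:proof}: surjectivity of $\Psi$ via Proposition~\ref{prop:aleph}, and a cocycle-level comparison with $\rlbc{X}$ over an equivariant good cover using the $\bar\partial$-Poincar\'e lemma, the signature interpretation from Proposition~\ref{prop:h22br}, and the positivity of $\mbq$ on $X(\bbr)$ for classes in $\ker\Psi$. The two differences from the paper's exposition are cosmetic or minor: you construct $\Phi\colon\rlbc{X}\to\dcr{2}{2}{X}$, while the paper constructs the same comparison in the opposite direction from $\ker\Psi$ to $\rlbc{X}$ (so your ``surjectivity onto $\ker\Psi$'' plays the role of the paper's definition of $\Phi$ and vice versa); and you propose to manufacture the Bredon component directly from equivariant $\log$-branches of $g_{\alpha\beta}$, whereas the paper instead \emph{starts} from a Bredon cocycle supplied by the forgetful map $\Theta\colon\rlbc{-}\to\bcz{2}{2}{-}$ of Lemma~\ref{lem:rem} and only afterwards reconciles it with the $(g,\psi)$-data via the period argument (Lemma~\ref{lem:period}), the local-obstruction argument (Lemma~\ref{lem:loa}), and Lemma~\ref{lem:hom}. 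That last point is where your sketch is vaguest: elements of $\brZq[2]^j$ are not logarithms but rather (sums of) equivariant simplicial maps into $(\cs)^2$ tensored with locally constant integers, so passing from $\log g_{\alpha\beta}$ to an honest Bredon cochain requires encoding through paths in $\cs$ and controlling the $\bbz(2)$-ambiguity exactly as those three lemmas do. Also, the parenthetical ``after twisting by locally constant units on real components'' is unnecessary: for $\eta\in\ker\Psi$, any representative $(L,\mbq)$ of $\varrho(\eta)$ already has $\mbq$ positive definite on every component of $X(\bbr)$, since the signature is an invariant of the $\sim_2$-class.
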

\begin{proof}
See Appendix \ref{app:proof}.
\end{proof}

\section{A remark on number fields}
\label{sec:ex}

Let $F$ be a number field and let $\Gamma_\bbr $ and $\Gamma_\bbc$
denote the sets of real  and complex embeddings of $F$,
respectively. One can write $\Gamma_\bbc = \Gamma^+_\bbc \times \frS$,
where $\Gamma_\bbc^+$ contains one chosen element in each $\frS$-orbit
of $\Gamma_\bbc$.

Abusing language, write $\dcrA{r}{p}{F}$ instead of
$\dcrA{r}{p}{X}$, where $X := Spec(F\otimes_\bbq \bbr)$. Observe that
$X(\bbc) \equiv \Gamma_\bbr \coprod \Gamma_\bbc$, and hence
\begin{align}
\label{eq:isos}
\dcrA{r}{p}{F} & =  \dcrA{r}{p}{\Gamma_\bbr} \times
\dcrA{r}{p}{\Gamma_\bbc}   \notag \\ &\equiv
\dcrA{r}{p}{*}^{\Gamma_\bbr} \times
\dccA{r}{p}{*}^{\Gamma_\bbc^+} \\ & \cong \dcrA{r}{p}{*}^s \times
\dccA{r}{p}{*}^{t}, \notag
\end{align}
where $A$ is a subring of $\bbr$ and  $\Gamma_\bbr = \{
\varphi_1,\ldots,\varphi_s\}$\ and\ $ \Gamma^+_\bbc = \{
\eta_1,\ldots,\eta_t \}$. In particular,
\begin{align}
\label{eq:H1}
\dcr{1}{1}{F}\ &\equiv\ (\bbr^\times)^{\Gamma_\bbr} \times
(\bbc^\times)^{\Gamma_\bbc^+}\ \cong\ (\bbr^\times)^s \times (\cs)^t
\quad
\text{ and }  \\
\dcrr{1}{1}{F}\ &\equiv\ \bbr^{\Gamma_\bbr} \times
\bbr^{\Gamma_\bbc^+}\ \cong\ \bbr^s \times \bbr^t ; \notag
\end{align}
cf. Example \ref{ex:dc-point}.
Taking adjoints to the evaluation maps $F^\times \times \Gamma_\bbr
\to \bbr^\times$ and $F^\times \times \Gamma_\bbc^+ \to \bbc^\times$
gives a monomorphism
\begin{equation}
\label{eq:reg1}
F^\times \to (\bbr^\times)^{\Gamma_\bbr} \times
(\bbc^\times)^{\Gamma_\bbc^+}\equiv \dcr{1}{1}{F} .
\end{equation}
Since $\oplus_{p\geq 0} \dcr{p}{p}{F}$ is a graded commutative
ring this map induces a homomorphism
\begin{equation}
\rho \colon T(F^\times) \to \oplus_{p\geq 0} \dcr{p}{p}{F},
\end{equation}
where $T(F^\times)$ is the tensor algebra of $F^\times$.
Using the commutativity of the diagram
$$
\xymatrix{
F^\times \otimes F^\times \ar[r]^-{\rho} & \dcr{1}{1}{F} \otimes
\dcc{1}{1}{F} \ar[r]^-{\cup} \ar[d]_-{\varrho\otimes \varrho} &
\dcr{2}{2}{F} \ar[d]_{\varrho}^{\cong} \\
& \bcz{1}{1}{F} \otimes \bcz{1}{1}{F} \ar[r]_-{\cdot} &
\bcz{2}{2}{F},
}
$$
together with the description of the ring structure of the Bredon
cohomology of a point \ref{ex:br0-pt}, one concludes that if $a
\neq 0, 1$, then $\varrho(a\otimes (1-a)) = 0$. It follows that $\varrho$
descends to a homomorphism
\begin{equation}
\label{eq:reg2}
\bar\varrho \colon K^M_*(F) \to \oplus_{p\geq 0} \dcr{p}{p}{F},
\end{equation}
from the Milnor $K$-theory ring of $F$ to the ``diagonal'' subring of
the integral Deligne cohomology of $F$.

\begin{remark}
\label{rem:fin}
\begin{enumerate}[i.]
\item If follows from the work of Bass and Tate that
$$
K^M_*(\bbr)/2K^M_{\geq 2}(\bbr) \cong\ \oplus_i\ \cald^{i,i}
\ \ \text{and} \ \ K^M_*(\bbr)/2K^M_{*}(\bbr) \cong \bbz/2[\varepsilon]\cong \oplus_i \calb^{i,i}.
$$
\item Since $\pi \colon Spec(F\otimes_\bbq \bbr) \to Spec(\bbr)$ is a
  finite cover, one has an   additive transfer   homomorphism
\begin{align}
\label{eq:transfer}
\pi_! \colon \dcrr{1}{1}{F} & \to \dcrr{1}{1}{\bbr}\\
(x_1,\ldots,x_s;y_1,\ldots,y_t) & \mapsto x_1+\cdots + x_s + 2y_1 +
\cdots + 2y_t; \notag
\end{align}
see \eqref{eq:H1}.
\item In subsequent work we will show that the homomorphism \eqref{eq:reg2} is a
  particular case of a natural transformation between the motivic
  cohomology of a real variety and its integral Deligne cohomology.
\end{enumerate}
\end{remark}

It follows from \eqref{eq:H1} and \eqref{eq:change} that the composition
$$ F^\times  \to \dcr{1}{1}{F}  \to
H^{1}_{\cald/\bbr}(F;\bbr(1))
$$
is given by
\begin{align}
F^\times & \longrightarrow \bbr^s \times \bbr^t \\
 x & \longmapsto
 (\log{|\varphi_1(x)|},\ldots,\log{|\varphi_s(x)|};\log{|\eta_1(x)|},\ldots,\log{|\eta_t(x)|}).
 \notag
\end{align}
Basic class field theory shows that the image of the units
$\mathfrak{o}^\times_F$ of the rings of integers of $F$ under this
map is a lattice $L$ in the hyperplane $H : x_1+\cdots + x_s +
2y_2 + \cdots + 2 y_t =0$, i.e., the kernel of the transfer
homomorphism \eqref{eq:transfer}. Therefore, the Euclidean
volume of this lattice in $H$ is given by $ \operatorname{Vol}(L)
= \frac{\sqrt{s+4t}}{2^t} \ R, $ where $R$ is the classical
regulator of $F$.



\appendix
\section{The Borel/Esnault-Viehweg version}
\label{sec:appEVv}

Given any equivariant cohomology theory $\mathfrak{h}^*$ on
$\frS$-spaces, one can define its corresponding \emph{Borel
version} $\mathfrak{h}^*_\text{bor}$ as
$\mathfrak{h}^*_\text{bor}(U):= \mathfrak{h}^*(U\times E\frS),$
where $E\frS$ is a contractible $\frS-CW$-complex on which $\frS$ acts freely.
In particular, one has an associated Borel cohomology theory
$\bcabor{n}{p}{X}$. This theory is $(0,2)$ periodic and can be
more easily calculated than Bredon cohomology, via Leray-Serre
spectral sequences
$$
E^{r,s}(p):= H^r(\frS,\mathcal{H}^s(X;A(p))) \Rightarrow
\bcabor{r+s}{p}{X(\bbc)}.
$$

It is easy to see that the natural map $\bbz_0(S^{p,p}) \to
F(E\frS,\bbz_0(S^{p,p}))$ is an equivariant homotopy equivalence,
thus giving the following result.
\begin{proposition}
\label{prop:bor-bre}
Let $Y$ be a $\frS$-space. For all $p\geq 0$ and $n\leq p$ one has
a natural isomorphism $\bca{n}{p}{Y} \cong \bcabor{n}{p}{Y}$.
\end{proposition}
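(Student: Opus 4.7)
The strategy is to combine the representability of both theories with the observation preceding the statement that $\bbz_0(S^{p,p}) \to F(E\frS, \bbz_0(S^{p,p}))$ is an equivariant homotopy equivalence, thereby reducing the proposition to a formal manipulation of equivariant homotopy classes.

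First I would use the $RO(\frS)$-graded suspension isomorphism to write
$$
\bca{n}{p}{Y} \;=\; [Y_+\wedge S^{p-n,0}, \bbz_0(S^{p,p})]_\frS
$$
for $n\leq p$ — the suspension is by the trivial representation, so this model is available precisely in the range $p-n\geq 0$. Applying the same formula to the $\frS$-space $Y\times E\frS$ yields
$$
\bcabor{n}{p}{Y} \;=\; \bca{n}{p}{Y\times E\frS} \;=\; [(Y\times E\frS)_+ \wedge S^{p-n,0},\, \bbz_0(S^{p,p})]_\frS.
$$

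Next I would invoke the smash–mapping space adjunction to rewrite the right-hand side as $[Y_+\wedge S^{p-n,0},\, F(E\frS_+, \bbz_0(S^{p,p}))]_\frS$, and check that the natural comparison map $\bca{n}{p}{Y}\to \bcabor{n}{p}{Y}$ — which arises from the projection $Y\times E\frS\to Y$ — is precisely the map on equivariant homotopy classes induced by the canonical $\frS$-map $\bbz_0(S^{p,p})\to F(E\frS_+, \bbz_0(S^{p,p}))$ adjoint to $E\frS_+\to S^0$.

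The conclusion is then immediate: since the latter map is an equivariant homotopy equivalence by the stated observation, applying $[W_+,-]_\frS$ with $W = Y\wedge S^{p-n,0}$ yields a bijection, which is natural in $Y$. The only non-formal input is the cited equivariant equivalence, which reduces on underlying spaces to the contractibility of $E\frS$ and on $\frS$-fixed points to the coincidence of genuine fixed points of $\bbz_0(S^{p,p})$ with their homotopy fixed points; this is the main obstacle, but the excerpt takes it as given, so in this proof it appears only as an invocation.
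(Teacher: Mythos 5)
Your proposal is correct and follows essentially the same route the paper intends: the paper's entire argument is the one-line observation that $\bbz_0(S^{p,p})\to F(E\frS,\bbz_0(S^{p,p}))$ is an equivariant weak equivalence, and you have simply made explicit the formal bookkeeping (representability via trivial suspension in the range $n\le p$, the smash--mapping-space adjunction, and the identification of the comparison map with the one induced by that equivalence) that converts the cited equivalence into the stated isomorphism.
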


In order to translate this construction into our context, denote
$X:= (\spec{\bbc})_{/\bbr}$ and let $E_\bullet \frS :=
{\caln}(X\to \spec{\bbr})$ be the nerve of the cover $X\to
\spec{\bbr}.$ This is a smooth simplicial real projective variety
with the property that $E_\bullet \frS(\bbc)$ is a simplicial
object in $\rav$ whose geometric realization
$|E_\bullet\frS(\bbc)|$ is a model for $E\frS$. In particular,
$\calz (E_\bullet \frS (\bbc) )$ defines a simplicial abelian
presheaf on $\gman$ whose associated complex (graded in negative
degrees) is denoted $\calz(E\frS)^*$

\begin{definition}
\label{def:borel}
Given a complex of presheaves $\calf^*$ on $\gman,$ define its
\emph{associated Borel complex} as
$$
\calf^*_\bor := \uhom{\calz(E\frS)^*}{\calf^*},
$$
and let $\iota_{\calf^*} \colon \calf^* \to \calf^*_\bor$ denote
the natural map induced by the projection $E_\bullet \frS \to
\spec{\bbr}.$ In particular, given $p\in \bbz$ one can define the
\emph{Borel version of the Deligne complex} as $\dApbor:=
\left(\dAp \right)_\bor$ and the Borel version of the Bredon
complex as $\brApbor$. Correspondingly, given $X\in \rav$ define
its \emph{Borel version of Deligne cohomology} as
$$
\dcrAbor{i}{p}{X} := \check\bbh^i( X_{\text{eq}}; \dApbor),
$$
and the Borel version of Bredon cohomology as
$$
\bcabor{i}{p}{X} := \check\bbh^i( X_{\text{eq}}; \brApbor)
$$
\end{definition}
It follows from the definitions that $$\dApbor =
\cone\left(\brApbor \oplus F^p\cale^*_\bor \to
\cale^*_\bor\right)[-1] .$$

\begin{proposition}
\label{prop:triang}
Given $p\in \bbz$ one has map of  exact triangles on $\rav:$
$$
\xymatrix{ \dAp \ar[r]\ar[d] & \brAp \oplus F^p\cale^*
\ar[r]\ar[d] & \cale^* \ar[d] \ar[r] & \dAp[1] \ar[d] \\
\dApbor \ar[r] & \brApbor \oplus F^p\cale_\bor^* \ar[r] &
\cale_\bor^* \ar[r] & \dApbor[1]
 }
$$
\end{proposition}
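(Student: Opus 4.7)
The plan is to exhibit the entire diagram as the image of the defining exact triangle of $\dAp$ under the natural transformation $\iota \colon \mathrm{id} \to (-)_\bor$, where $(-)_\bor = \uhom{\calz(E\frS)^*}{-}$. The top row is a standard distinguished triangle by the very definition $\dAp = \cone(\tau_p-\imath)[-1]$, so the whole proposition will amount to checking that $(-)_\bor$ carries this triangle to the analogous triangle defining $\dApbor$, and that $\iota$ provides the vertical maps between them.

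First I would verify that for any morphism $\phi \colon \calf^* \to \calg^*$ of complexes of abelian presheaves on $\gman$ there is a strict (not just homotopy) natural isomorphism $\cone(\phi)_\bor \cong \cone(\phi_\bor)$. This reduces at each degree to the observation that $\uhom{\calz(E\frS)^n}{-}$ commutes with finite direct sums (so the underlying graded presheaves agree) and that the differentials on both sides are assembled functorially from $\phi$ and the differentials of $\calf^*$ and $\calg^*$. Applied to $\phi = \tau_p - \imath \colon \brAp \oplus F^p\cale^* \to \cale^*$, this recovers exactly the identity $\dApbor = \cone(\brApbor \oplus F^p\cale^*_\bor \to \cale^*_\bor)[-1]$ noted immediately before the proposition.

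Second, I would invoke the naturality of $\iota_{\calf^*} \colon \calf^* \to \calf^*_\bor$ with respect to morphisms of complexes, which is automatic from its construction via the augmentation $E_\bullet\frS \to \spec\bbr$ (equivalently $\calz(E\frS)^* \to \calz(\mathrm{pt})^*$). This naturality produces the three middle/right squares between the two rows, while the leftmost square and its shift are obtained by applying the previous step to $\iota_{\tau_p - \imath}$. For $p<0$ the convention $\dAp = \brAp$ makes the statement reduce to the tautological naturality of the Borel construction applied to the Bredon complex alone.

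The only genuine technical point, and hence the main (minor) obstacle, is the strict commutation of $(-)_\bor$ with cones claimed in the first step; this works on the nose because $\calz(E\frS)^*$ is a complex of presheaves of \emph{free} abelian groups (levelwise represented by $\frS$-manifolds), so $\uhom{\calz(E\frS)^n}{-}$ is additive and preserves the levelwise direct-sum structure of the cone. Everything else is pure naturality.
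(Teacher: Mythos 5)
Your proof is correct and follows the same route the paper implicitly takes: the paper gives no written proof, but the sentence just before the proposition (``It follows from the definitions that $\dApbor = \cone(\brApbor \oplus F^p\cale^*_\bor \to \cale^*_\bor)[-1]$'') is precisely your first step (strict commutation of $(-)_\bor = \uhom{\calz(E\frS)^*}{-}$ with the cone construction), and the vertical maps are indeed supplied by the naturality of $\iota_{\calf^*}$. One small inaccuracy that does not affect the argument: additivity of $\uhom{\calz(E\frS)^n}{-}$ and hence its commutation with the finite direct sums appearing in a mapping cone is automatic for any $\underline{Hom}$ functor between additive categories and does not rely on $\calz(E\frS)^n$ being a presheaf of free abelian groups (freeness would only matter for exactness in the first variable, which is not what is being used here).
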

\begin{corollary}
Let $X$ be a real analytic manifold such that $X^\frS =
\emptyset.$ Then $\iota \colon \dcrA{i}{p}{X}\to
\dcrAbor{i}{p}{X}$ is an isomorphism for all $i$ and $p.$
\end{corollary}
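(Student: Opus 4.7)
The plan is to apply the five-lemma to the morphism of long exact hypercohomology sequences arising from the diagram of exact triangles in Proposition~\ref{prop:triang}. This reduces the assertion to showing that, when $X^\frS=\emptyset$, the map $\iota_{\calf^*}$ induces an isomorphism on $\check{\bbh}^*(X_{\text{eq}};-)$ for each of the three complexes $\calf^*\in\{\brAp,\ F^p\cale^*,\ \cale^*\}$.

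The hypothesis $X^\frS=\emptyset$ means that the $\frS$-action on $X$ is free. Consequently $X_{\text{eq}}$ admits a basis of equivariant opens $V$ that split equivariantly as $V\cong V_0\times\frS$, with $\frS$ acting trivially on $V_0$ and by translation on the second factor. Via a Čech-hypercohomology argument on such a basis, it suffices to check that $\iota_{\calf^*}(V)$ is a quasi-isomorphism for every such free equivariant open $V$. On any such $V$ the augmentation $V\times E\frS\to V$ is an $\frS$-equivariant weak equivalence: both sides have empty fixed-point sets, and under the trivialization $V\cong V_0\times\frS$ the orbit-space map $(V\times E\frS)/\frS\to V/\frS$ reads as $V/\frS\times E\frS\to V/\frS$, which is a homotopy equivalence since $E\frS$ is contractible.

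For $\calf^*=\brAp$, the theorem following Definition~\ref{def:bredon} identifies $\check{\bbh}^*(V_{\text{eq}};\brAp)$ with genuine Bredon cohomology $\bca{*}{p}{V}$; since Bredon cohomology is $\frS$-homotopy invariant, the equivalence $V\times E\frS\simeq_\frS V$ yields $\bca{*}{p}{V}\cong \bca{*}{p}{V\times E\frS}\equiv\bcabor{*}{p}{V}$, which is the content of $\iota_{\brAp}$ being a hypercohomology isomorphism. For $\calf^*=\cale^*$ (resp.\ $F^p\cale^*$), the invariant forms provide a soft resolution computing (Hodge-filtered) de Rham cohomology: $\check{\bbh}^*(V_{\text{eq}};\cale^*)$ recovers the $\frS$-invariant complexified de Rham cohomology of $V$, while $\check{\bbh}^*(V_{\text{eq}};\cale^*_\bor)$ recovers that of $V\times_\frS E\frS$; since the action is free, $V\times_\frS E\frS\simeq V/\frS$, and the two hypercohomologies agree. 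The Hodge filtration is respected by the augmentation because the latter is holomorphic, so the same argument applies to $F^p\cale^*$.

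The main obstacle will be the Bredon case: translating ``$\iota_{\brAp}$ is a hypercohomology isomorphism'' into a statement that can be combined with the five-lemma in the triangle of Proposition~\ref{prop:triang} requires the full strength of Yang's identification of $\check{\bbh}^*(-_{\text{eq}};\brAp)$ with genuine $\bca{*}{p}{-}$, rather than a stalkwise argument. The most efficient route is therefore to phrase the entire reduction at the level of hypercohomology groups on $X_{\text{eq}}$, invoking $\frS$-homotopy invariance of Bredon cohomology and ordinary homotopy invariance of de Rham cohomology of the orbit space, and then to apply the five-lemma once to conclude.
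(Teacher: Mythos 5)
Your proposal is correct and shares the overall skeleton of the paper's argument --- apply the five-lemma to the map of long exact hypercohomology sequences coming from Proposition~\ref{prop:triang}, and verify that the maps on each of the three building blocks are isomorphisms. The treatment of the Bredon piece $\brAp$ is also the same: the free $\frS$-action makes $X \times E\frS \to X$ an equivariant equivalence, so genuine and Borel Bredon cohomology agree. Where you diverge from the paper is in the form pieces $\cale^*$ and $F^p\cale^*$. You invoke the free action a second time, identifying $\check{\bbh}^*(X_{\text{eq}};\cale^*)$ and $\check{\bbh}^*(X_{\text{eq}};\cale^*_\bor)$ via the orbit space $X/\frS \simeq X\times_\frS E\frS$. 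The paper instead uses a purely algebraic observation that does not touch the hypothesis $X^\frS = \emptyset$ at all: since $\cale^*$ is uniquely $2$-divisible (it is a complex of $\bbc$-vector spaces) and $|\frS| = 2$, the higher cohomology implicit in the bar construction defining $\cale^*_\bor = \uhom{\calz(E\frS)^*}{\cale^*}$ vanishes, so $\iota\colon F^p\cale^* \to F^p\cale^*_\bor$ is a quasi-isomorphism of presheaves for \emph{every} $X$, free or not. Both routes are valid, but the paper's is sharper: it isolates the fact that the free-action hypothesis is needed only to handle the integral Bredon complex, whereas the differential-form layers of the Deligne complex never see the difference between genuine and Borel cohomology. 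Your argument buys nothing extra here and uses the hypothesis more heavily than necessary, but it does reach the same conclusion and is a reasonable alternative if one prefers to argue geometrically throughout.
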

\begin{proof}
Since multiplication by $2$ is invertible in $\cale^*$ and
preserves the filtration $\{ F^p\cale^* \}$ one concludes that
$\iota \colon F^p\cale^*\to F^p\cale^*_\bor$ is a
quasi-isomorphism for all $p\in \bbz.$ The result now follows from
the five-lemma and the fact the same result holds for Bredon
cohomology.
\end{proof}

\begin{remark}
When $X$ is a projective smooth real variety, the \emph{associated
Borel version of the Deligne cohomology groups}
$\dcrAbor{i}{p}{X}$ defined above coincide with the Deligne
cohomology for real varieties introduced by Esnault and Viehweg in \cite{esnault}. However,
in general, the groups $\dcrA{i}{p}{X}$ and $\dcrAbor{i}{p}{X}$ are
rather distinct.
\end{remark}

\section{Proof of Theorem \ref{thm:h22}}
\label{app:proof}

We will need the following two technical lemmas.

\begin{lemma}
\label{lem:hom}
Let $\imath \colon  \uZ \to \cale^0$ denote the natural inclusion
of sheaves on $\gman$. Given any real number $\lambda$ one can
find a map of presheaves $\xi_{\lambda} \colon \uZ \to
\br{\bbz}{2}^0 $ such that for each $U\in \gman$, the composition
$$
\uZ ( U ) \xrightarrow{\xi_\lambda} \br{\bbz}{2}^0 (U)
\xrightarrow{\tau}  \cale^0(U)
$$
coincides with $\lambda \cdot \imath$.
\end{lemma}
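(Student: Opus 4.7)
The plan is to build $\xi_\lambda$ from a single universal piece of data depending only on $\lambda$, so that naturality is essentially tautological, and to read off the composition with $\tau^0$ directly from the explicit formula \eqref{eq:tau1} of Proposition~\ref{prop:tau}.

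First I would produce a smooth $\frS$-equivariant map $f_\lambda \colon \Delta^2 \to (\cs)^2$, with $\Delta^2$ carrying the trivial $\frS$-action, such that $\int_{\Delta^2} f_\lambda^*\omega_2 = \lambda$. The simplest choice is
\[
f_\lambda(t_0,t_1,t_2) := (e^{2\lambda t_1},\,e^{t_2}).
\]
It lands in $(\bbr^\times)^2_+$, the $\frS$-fixed locus of $(\cs)^2$, so equivariance is automatic; and a direct computation gives $f_\lambda^*\omega_2 = 2\lambda\, dt_1\wedge dt_2$, whose integral over the standard $2$-simplex equals $\lambda$.

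Next, for each $U\in\gman$ and $m\in\uZ(U)$, I would set
\[
\xi_\lambda(m) \ :=\ (a, f)\otimes m \ \in\ \br{\bbz}{2}^0(U),
\]
with $a = 0$ and $f := f_\lambda\circ p_{\Delta^2}\colon \Delta^2\times U\to(\cs)^2$, using $S=U$ with $\pi_S=\mathrm{id}_U$ as the covering in the coend defining $\br{\bbz}{2}^0(U)$. Additivity in $m$ is immediate from the tensor in the coend, and naturality in $U$ is clear because every ingredient is either zero or pulled back from $\Delta^2$ along $p_{\Delta^2}$, independently of $U$; hence for any morphism $\phi\colon U'\to U$ in $\gman$ one gets $\phi^*\xi_\lambda(m) = \xi_\lambda(\phi^*m)$.

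Finally, to compute $\tau^0\circ \xi_\lambda$, I would apply the formula \eqref{eq:tau1} with $\hat\pi = p_U\colon \Delta^2\times U\to U$:
\[
\tau^0(\xi_\lambda(m))\ =\ (p_U)_!\bigl(p_U^* m\cdot p_{\Delta^2}^*(f_\lambda^*\omega_2)\bigr)\ =\ m\cdot\!\int_{\Delta^2}\! f_\lambda^*\omega_2\ =\ \lambda\cdot m,
\]
where the middle equality uses the projection formula from Properties~\ref{proper:int} together with the fact that $m$ is locally constant. This gives $\tau^0\circ \xi_\lambda = \lambda\cdot \imath$, as required. I do not anticipate any substantive obstacle here; the point worth flagging is that the lemma only asks for a map of presheaves into the degree-$0$ group $\br{\bbz}{2}^0$, so the $a$-component can simply be taken to be $0$, without needing $\xi_\lambda(m)$ to be a $D$-cocycle (which would instead force a nontrivial $a$ matching $-d^*f_\lambda$ and would complicate the construction).
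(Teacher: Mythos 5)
Your proof is correct and follows essentially the same strategy as the paper's: one produces an explicit equivariant smooth chain in the $\frS$-fixed locus $(\bbr^\times)^2\subset(\cs)^2$ whose integral of $\omega_2$ equals $\lambda$, pulls it back to $\Delta^2\times U$ via the projection, and reads off the composite with $\tau^0$ from formula \eqref{eq:tau1} and the projection formula. The paper instead uses two simplices $\phi_{a,1},\phi_{a,2}$ triangulating a rectangle $I_a\times I_a$ with $(\log a)^2=|\lambda|$ (and a sign for $\lambda<0$); your single map $f_\lambda(t_0,t_1,t_2)=(e^{2\lambda t_1},e^{t_2})$ with $\int_{\Delta^2}f_\lambda^*\omega_2=\lambda$ is a mild streamlining of the same idea.
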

\begin{proof}
Let $a>0$ be a positive real number and let $I_a$ denote the interval
$[a,1]$ if $a<1$ and $[1,a]$ if $a\geq 1$. Let $\phi_{a,i} \colon \Delta^2
\to \bbr^\times \times \bbr^\times \subset \bbc^\times \times
\bbc^\times$,  $i=1,2$ be smooth maps that give an oriented
triangulation of the rectangle $I_a\times I_a \subset \bbr^\times
\times \bbr^\times.$ It follows that
$$ \int_{\Delta^2}
(\phi_{a,1}^*\omega_2 + \phi_{a,2}^*\omega_2) = (\log{a})^2.
$$

Let $p_U \colon U \to *$ denote the projection to the point, where $U$
is a $\frS$-manifold. Given $\nu \in \uZ(U)$, define
$\xi_{\lambda}(\nu) \in \br{\bbz}{2}^0(U) $ by
$$\xi_{\lambda}(\nu) :=
\begin{cases}
(p_U^*\phi_{a,1} + p_U^*\phi_{a,2})\otimes \nu
&, \text{ if } \lambda >0 \text{ and } a =
\exp{\sqrt{\lambda}} \\
- (p_U^*\phi_{a,1} + p_U^*\phi_{a,2})\otimes \nu &, \text{ if }
  \lambda < 0 \text{ and } a =
\exp{\sqrt{|\lambda|}}. \end{cases}
$$
It is clear that $\xi_\lambda$ is a homomorphism satisfying the
desired conditions.
\end{proof}

\begin{lemma}[The period argument]
\label{lem:period}
Given $\alpha \in \brZp(U)^0$ satisfying $d^B\alpha =0$, then $\tau(\alpha) \in
\underline{\bbz(p)}(U)$. In other words, $\tau(\alpha) \in \cale^0(U)$
is a locally constant equivariant function with values in
$\bbz(p)$.
\end{lemma}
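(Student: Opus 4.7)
The plan is to reduce the claim to the classical fact that periods of $\omega_p$ against integral relative cycles in $(\cs^p,\widehat{\cs^p})$ lie in $(2\pi i)^p\bbz$. Since Proposition~\ref{prop:tau} establishes that $\tau$ is a map of complexes, the cocycle hypothesis immediately gives $d\,\tau(\alpha) = \tau(d^B\alpha) = 0$. A closed smooth function is locally constant, and the invariance $\overline{\sigma^*\tau(\alpha)}=\tau(\alpha)$ inherent to $\cale^0$ already matches the conjugation action on $\bbz(p)\subset\bbc$. Hence it suffices to show that the locally constant values lie in $(2\pi i)^p\bbz$.

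For this I would evaluate pointwise. Representing $\alpha = \sum_i (a_i,f_i)\otimes m_i$ with $\pi_i\colon S_i\to U$ in $\widehat{U}$, $a_i\colon\Delta^{p-1}\times S_i\to\widehat{\cs^p}$, $f_i\colon\Delta^p\times S_i\to\cs^p$ and $m_i\in\uZ(S_i)$, formula~\eqref{eq:tau1} gives
\[
\tau(\alpha)(u)\;=\;\sum_i\sum_{s\in\pi_i^{-1}(u)}m_i(s)\int_{\Delta^p}f_{i,s}^*\omega_p\;=\;\int_{C_u}\omega_p,
\]
where $f_{i,s}(x):=f_i(x,s)$ and $C_u := \sum_i\sum_{s\in\pi_i^{-1}(u)} m_i(s)\,f_{i,s}$ is an integral singular $p$-chain in $\cs^p$. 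Unfolding the cocycle relation $d^B\alpha = 0$ in the form $a_i + d_p^*f_i = 0$ (in the sense used in the proof of Proposition~\ref{prop:tau}), and converting to the singular boundary via the convention $d^n = (-1)^n d_{-n}$, one finds that $\partial C_u$ equals, up to the sign $(-1)^{p+1}$, the chain $\sum_i\sum_s m_i(s)\,a_{i,s}$, which factors through $\widehat{\cs^p}$. Thus $C_u$ is a relative cycle in $C_p^{\mathrm{sing}}(\cs^p,\widehat{\cs^p};\bbz)$.

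Finally, the period argument closes the proof. Because $\frac{dt_i}{t_i}\big|_{t_i=1}=0$, the form $\omega_p$ pulls back to zero on each $\csi{p-1}$, so it represents a closed class in the relative de Rham cohomology $H^p(\cs^p,\widehat{\cs^p};\bbc)$, and the period pairing with relative homology is well defined. Non-equivariantly, $\cs^p/\widehat{\cs^p}$ is homotopy equivalent to the smash product $(\cs)^{\wedge p}\simeq S^p$, so $H_p(\cs^p,\widehat{\cs^p};\bbz)\cong\bbz$, generated for instance by the unit torus $T^p$ parametrized by $(\theta_1,\dots,\theta_p)\mapsto(e^{2\pi i\theta_1},\dots,e^{2\pi i\theta_p})$, for which a direct computation yields $\int_{T^p}\omega_p=(2\pi i)^p$. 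Hence $\tau(\alpha)(u)=\int_{C_u}\omega_p\in(2\pi i)^p\bbz=\bbz(p)$. The only delicate step in the plan is the sign bookkeeping in the cone differential of $\brZp$ versus the singular boundary, needed to confirm that $C_u$ really is an integer relative cycle; once the conventions are pinned down this is routine.
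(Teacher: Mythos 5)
Your proof is correct and follows the same strategy as the paper: evaluate $\tau(\alpha)$ pointwise as the integral of $\omega_p$ over the integral chain $C_u$ (the paper's $T_{\alpha,x_0}$) obtained by restricting the cocycle data to a fiber, then invoke the period $\int_{T^p}\omega_p=(2\pi i)^p$. You are in fact a bit more careful than the paper on one point: the paper asserts that $T_{\alpha,x_0}$ is an absolute integral $p$-cycle in $(\cs)^p$, whereas, as you note, the cocycle condition only forces $\partial C_u$ to land in $\widehat{\cs^p}$, making $C_u$ a \emph{relative} cycle; your observation that $\omega_p$ restricts to zero on each $\csi{p-1}$ (so the period pairing with $H_p(\cs^p,\widehat{\cs^p};\bbz)\cong\bbz$ is well defined) fills in exactly the detail the paper glosses over, and leads to the same conclusion.
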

\begin{proof}
The cocycle $\alpha$ is represented by an element of the form $(a,\sum_i
f_i\otimes \nu_i) $, where $f_i \colon S_i \times \Delta^p \to
(\bbc^\times)^p$ is smooth and equivariant, $p_i \colon S_i \to U$
is an equivariant covering map, and $\nu_i \colon S_i \to \bbz$ is
equivariant and locally constant. Since $d\tau(\alpha) = \tau (d^B \alpha) =
0$, we know that $\tau(\alpha) $ is an equivariant locally constant
function.

Given $x_0\in U$ and $y\in p_i^{-1}(x_0) \subset U_i$ the
restriction of $f_i$ to $y\times \Delta^p$ is a smooth proper map,
and hence one obtains a smooth integral $p$-simplex $f_{i \#} [y\times
\Delta^p ]$ in $(\bbc^\times)^p$ with boundary $\partial f_{i \#}
[y\times \Delta^p ]= f_{i \#}[y\times \partial \Delta^p].$  It
follows that
$$
T_{\alpha,x_0} := \sum_i\ \sum_{y\in \pi_i^{-1}(x_0)}\ \nu_i(y) f_{i
\#}[y\times \Delta^p]
$$
is a smooth integral $p$-cycle on $(\bbc^\times)^p$ and a
simple inspection shows that $\int_{T_{\alpha,x_0}}
\omega_p = \tau(\alpha) (x_0)$. On the other hand, since $T_{\alpha,x_0}$
represents an integral homology class in $(\bbc^\times)^p$, then
$\int_{T_{\alpha,x_0}} \omega_p$ is a period of $\omega_p$ over an
integral homology class and hence it lies in $\bbz(p)$.
\end{proof}

\subsection{Cocycles for Bredon cohomology}

As a preparation to the main arguments of next section, we describe
the isomorphism
\begin{equation}
\label{eq:explicit}
\Phi \colon \check\bbh^2(X(\bbc)_\text{eq}; \br{\bbz}{2}) \to
\check\bbh^1(X(\bbc)_\text{eq};G^0\to G^1)
\end{equation}
in terms of $\check{\text{C}}$ech cocycles; cf. Proposition \ref{prop:h22br}.

If $U$ is a $\frS$-manifold, we denote by $U^{\rm triv}$ the same
space with the trivial $\frS$-action. Given integers $n \geq j \geq 0$
let $D^{n,j} \subset (n-j)\cdot \bone \oplus j\cdot \xi$ denote the
unit ball in $\bbr^n$ with the action induced by the
representation. We say that a $\frS$-manifold $U$ has {\bf Type $\frS$} if
it is equivariantly isomorphic to $(D^{n,0})^{\rm triv} \times
\frS $. We say that $U$ has {\bf Type $P$} if $U\cong D^{n,j}, $ for
some $j\geq 0$.

Let $Y$ be a $\frS$-manifold of dimension $n$. A \emph{good cover}
for $Y$ is an open cover $\underline{\mathcal{V}} = \{ V_\alpha \mid \alpha
\in \Lambda \}$ such that all non-empty intersections are contractible. We may even assume that these intersections are homeomorphic to disks. We say that $\underline{\mathcal{V}}$ is \emph{equivariantly good} if the group permutes the open sets in the cover. A cover with these properties always yields an {\bf equivariant good cover} $\underline{\calu}$, i.e. a cover by $\frS$-invariant open sets having the property
that all elements $U_{\alpha_0\cdots \alpha_k} := U_{\alpha_0}\cap
\cdots \cap U_{\alpha_k}$ in the nerve of $\underline{\calu}$ have
either Type $\frS$ or Type $P$. Furthermore, if {\bf one} of the
elements $\alpha_0,\ldots, \alpha_k$ has Type $\frS$, then the
intersection $\alpha_0\cdots \alpha_k$ is required to have Type $\frS$. Also, if {\bf
all} $\alpha_0,\ldots,\alpha_k$ have Type $P$, then
intersection $\alpha_0\cdots \alpha_k$ must also have Type $P$. Abusing language, we
say that the index $\alpha_0\cdots \alpha_k$ has Type $\frS$ or Type $P$, accordingly.

\begin{remark}
\label{rem:cofinal}
Using totally convex balls for a Riemannian metric
so that $\sigma$ acts via isometries, one sees that any
$\frS$-manifold $Y$ has  an equivariant good cover. Also, these covers
form a cofinal family amongst the family of all equivariant covers of
$Y$.
\end{remark}

\begin{lemma}[The local obstruction argument]
\label{lem:loa}
Let $Y$ be a $\frS$-manifold whose path-components are all
contractible. Then, for all $p\geq 0$,  the complex $\brZp(Y\times
\frS)$ is acyclic, i.e.  $H^j\left(\brZp(Y\times
\frS)\right) = 0$ for all $j\neq 0$. In particular, $\brZp(U)$ is
acyclic if $U$ is a
$\frS$-manifold of Type $\frS$ and $\brZp(U\times \frS)$ is acyclic if $U$
has Type $P$.
\end{lemma}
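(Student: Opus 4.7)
The plan is to reduce the acyclicity of $\brZp(Y\times \frS)$ to a direct computation for $Y = \{\mathrm{pt}\}$ via homotopy invariance, and then to perform that computation by identifying $\brZp(\frS)$ with a non-equivariant chain complex.

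First, by Remark~\ref{rem:eq-vs-nonequ}, the $\frS$-isomorphism $Y\times \frS \cong Y^{\rm triv}\times \frS$ lets us assume the first factor carries the trivial action. I would then apply the homotopy invariance of Proposition~\ref{rem:simp-ps}(ii) to the collapse map $Y^{\rm triv} \times \frS \to \pi_0(Y)\times \frS$. Since each path-component of $Y$ is contractible, this collapse is a non-equivariant homotopy equivalence; because the $\frS$-action is free (on the second factor), it is also an equivariant homotopy equivalence and induces a quasi-isomorphism on $\brZp$. Additivity of $\brZp$ over disjoint unions (which both the coend and $\calc^*$ respect) then further reduces the problem to $Y = \{\mathrm{pt}\}$, i.e., to showing that $\brZp(\frS)$ has cohomology concentrated in degree $0$.

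Second, I would describe $\brZp(\frS)$ explicitly. Because $\frS$ acts freely on itself, every equivariant finite cover $\pi_S \colon S \to \frS$ is isomorphic to $T\times \frS \to \frS$ for a finite set $T$, so the category $\widehat{\frS}$ is equivalent to the category of finite sets. Under this equivalence, equivariant maps $\Delta^n \times S \to \cs^p$ correspond to arbitrary maps $\Delta^n \times T \to \cs^p$, and $\uZ$ restricts to the constant sheaf $\bbz$. Consequently, the coend $\calc^*(\calz_0(S^{p,p}))\coend_{\widehat{\frS}} \uZ$ is identified with the non-equivariant normalized smooth singular chain complex of the pointed topological space $\bbz_0(S^{p,p})$, with its $\frS$-structure forgotten. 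Since $\bbz_0(S^{p,p})$ is a non-equivariant model for $K(\bbz, p)$ (cf.~\cite{dS-DT}), its smooth singular cohomology equals $\bbz$ in degree $p$, and the shift $[-p]$ in definition~\eqref{eq:pBrCx} yields $H^0(\brZp(\frS))=\bbz$ and $H^j(\brZp(\frS))=0$ for $j\neq 0$.

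The ``in particular'' assertions follow immediately: for Type $\frS$, $U\cong (D^{n,0})^{\rm triv}\times \frS$ with $D^{n,0}$ a contractible disk; and for Type $P$, $U$ is a representation disk and hence contractible as a topological space, so $U\times \frS$ falls under the main case. The main obstacle will be the precise verification in step two, where one must check that the Mackey coend relations $(\phi^*\alpha')\otimes m - \alpha'\otimes \phi_*m$ collapse to the usual normalization for singular chains of $\bbz_0(S^{p,p})$ under the equivalence $\widehat{\frS}\simeq(\text{finite sets})$, and that the cone construction defining $\redcxp$ (which kills chains factoring through $\widehat{\cs^p}$) aligns with the standard passage from chains on $(\cs)^p$ to reduced chains on the one-point compactification $S^{p,p}$.
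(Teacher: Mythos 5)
Your first reduction matches the paper exactly: both proofs use the isomorphism $Y\times \frS \cong Y^{\rm triv}\times \frS$, obtain an equivariant deformation retraction $Y^{\rm triv}\times\frS\to\pi_0(Y)\times\frS$ from the contractibility of the components, and invoke Proposition~\ref{rem:simp-ps} for homotopy invariance of $\brZp$. The extra reduction by additivity to $\brZp(\frS)$ is a harmless embellishment. The ``in particular'' part you handle the same way the paper does.

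The final computation, however, contains a genuine error. The coend $\calc^*(\calz_0(S^{p,p}))\coend_{\widehat{\frS}}\uZ$ evaluated at $\frS$ is \emph{not} the singular chain complex of the topological space $\bbz_0(S^{p,p})$. Unwinding the coend (using the fact that $\widehat{\frS}$ is equivalent to finite sets with $\uZ$ restricting to the constant Mackey functor, as you correctly observe), one obtains the underlying \emph{non-equivariant smooth relative chain complex of the pair} $(\cs^p,\widehat{\cs^p})$ --- here $\widehat{\cs^p}=\bigcup_i\csi{p-1}$ is the fat wedge --- not the chain complex of the Dold--Thom group $\bbz_0(S^{p,p})$. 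More seriously, even granting your identification, the claim ``$\bbz_0(S^{p,p})$ is a $K(\bbz,p)$, so its smooth singular cohomology equals $\bbz$ in degree $p$'' is false: $H^*_{\rm sing}(K(\bbz,p);\bbz)$ is a large ring (e.g.\ $H^*(K(\bbz,2);\bbz)=\bbz[x]$), and it is the \emph{homotopy groups} of $\bbz_0(S^{p,p})$, not its singular cohomology, that are concentrated in degree $p$ by Dold--Thom. The correct conclusion is that the relative complex computes $H^*(\cs^p,\widehat{\cs^p})\cong\tilde H^*(S^{p,p})\cong\tilde H^*(S^p)$, which is indeed $\bbz$ in degree $p$, so after the shift $[-p]$ the cohomology is concentrated in degree $0$.

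The paper circumvents this entirely: after the homotopy-invariance reduction, it cites the theorem that $H^*(\brZp(\pi_0(Y)\times\frS))$ is the Bredon cohomology $\bcz{*}{p}{\pi_0(Y)\times\frS}$, and for a free $\frS$-set this equals $H^*_{\rm sing}(\pi_0(Y);\bbz(p))$, which vanishes in nonzero degrees since $\pi_0(Y)$ is discrete. Your instinct to make the computation explicit is reasonable, but either use the paper's route or replace the appeal to the cohomology of a $K(\bbz,p)$ by the relative/excision computation above.
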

\begin{proof}
Using the isomorphism $U\times \frS \cong U^{\rm triv} \times
\frS$ in Remark \ref{rem:cofinal}(ii), pick one point in each path
component of $Y$  and obtain an equivariant strong deformation
retraction $Y\times \frS \simeq \pi_0(Y) \times \frS$, where
$\pi_0(Y)$ is given the discrete topology. It follows from
Proposition \ref{rem:simp-ps} that one has a quasi-isomorphism
$\brZp(Y\times \frS) \simeq \brZp(\pi_0(Y)\times \frS)$. Now, the
cohomology of the latter complex gives  the bigraded Bredon
cohomology groups $\bcz{*}{p}{\pi_0(Y)\times\frS} \cong H^*_{\rm
sing}(\pi_0(Y); \bbz(p))$. The result follows.
\end{proof}

Let $\underline{\calu} := \{ U_\alpha \mid \lambda \in \Lambda \}$ be
an ``equivariant good cover" of $Y$ and let $\mbh = \left(
(h^i_{\alpha_0\cdots\alpha_j}) \mid i+j = 2, j\geq 0 \right) $ be a
$\check{\text{C}}$ech cocycle representing an element $[\mbh] \in
\bbh^2(Y_\text{eq};\br{\bbz}{2}).$
The cocycle condition gives:
\begin{equation}
\label{eq:diff}
d^B h^2_{\alpha_0} = 0 \quad \text{and} \quad \delta h^i_{\alpha_0
  \cdots \alpha_j} = (-1)^j d^B h^{i-1}_{\alpha_0 \cdots
  \alpha_{j+1}}  \ \ \text{for all}\ \ i \leq 1,
\end{equation}
where  $d^B$ and $\delta$ are the differentials in the Bredon and
$\check{\text{C}}$ech complexes, respectively.
\medskip

\noindent{\bf MAIN GOAL:}{\it
We will find a representative $((g_{\alpha_0\alpha_1})\ , \ (\rho_{\alpha_0}) ) \in \check{C}(\underline{\calu}, G^0 \to G^1)$ for $\Phi([\bbh])$, satisfying:
\begin{align*}
\delta(g_{\alpha_0\alpha_1}) & = 1 ; \\
  \left( g_{\alpha_0\alpha_1}\cdot \overline{\sigma^*g_{\alpha_1\alpha_1}} \right) & = \delta(\rho_{\alpha_0}); \\ (\overline{\sigma^* \rho_{\alpha_0}}) & = (\rho_{\alpha_0}).
\end{align*}
}

\begin{remark}
\label{rem:explanation}
Let $[\mbh]$ be as above. Given a fixed point $x_0 \in Y^{\frS}$, let
$U_{\alpha_0}$ be an element of the cover (necessarily of Type $P$)
containing $x_0$. Since $\brZp$ has homotopy invariant cohomology
presheaves, one has  natural isomorphisms
$H^j(\br{\bbz}{2}(U_{\alpha_0}) \cong \calb^{j,2}$. It follows from
\eqref{eq:diff} that  $h_{\alpha_0}^2$ represents
a class in  $H^2\left( \br{\bbz}{2}(U_{\alpha_0}) \right) \ \cong\
\calb^{2,2}\cong \bbz^\times $, thus giving an element in
$\bbz^\times$. Notice that this element is the same for any point $x
\in U_{\alpha_0} \cap Y^{\frS}$ and it is easy to see that it depends
only on the class $[\mbh] \in \bcz{2}{2}{Y^{\frS}}.$ Hence, the resulting
map $S \to \bbz^\times$ depends only on $[\mbh]$, and this
is an additional description of the signature map in terms of
$\check{\text{C}}$ech-Bredon cocycles.
\end{remark}

The identity
$\delta(h^0_{\alpha_0\alpha_1\alpha_2}) = (d^B
h^{-1}_{\alpha_0\cdots \alpha_3})$ gives
$\delta(\tau h^0_{\alpha_0\alpha_1\alpha_2} ) = 0$ and hence, since
$\cale^0$ is a soft sheaf, one can find $f^0_{\alpha_0\alpha_1} \in
\cale^0(U_{\alpha_0\alpha_1})$ such that
\begin{equation}
\label{eq:nu}
(\tau
h^0_{\alpha_0\alpha_1\alpha_2}) = \delta (f^0_{\alpha_0\alpha_1}).
\end{equation}

Let $p \colon Y\times \frS \to Y$ denote the projection. Given any
$\alpha_0\cdots \alpha_j$ in the nerve of the covering
$\underline{\calu}$, we use the same notation $p \colon
U_{\alpha_0\cdots \alpha_j} \times \frS \to U_{\alpha_0\cdots
  \alpha_j}$ to denote the corresponding projection. For any
presheaf $\calf$ on $\gman$ and $h \in \calf(U_{\alpha_0\cdots
  \alpha_j})$ let $p^*h \in \calf(U_{\alpha_0\cdots \alpha_j}
\times \frS)$ denote the pull-back of $h$ under $p$.
\medskip

\noindent{\bf \underline{Type $\frS$ case:}}\ \hfill
\smallskip

Recall that if  $\alpha_k$ is of Type $\frS$, for some $k=1,\ldots,j$, then
so is $U_{\alpha_0\cdots \alpha_j}$.

\noindent{\bf Step 1:}\  If $\alpha_0$ is of Type $\frS$, then the \emph{local
obstruction argument} (Lemma \ref{lem:loa}) implies that
one can find $t^1_{\alpha_0} \in \br{\bbz}{2}^1(U_{\alpha_0})$ such that
\begin{equation}
\label{eq:t1I}
d^B t^1_{\alpha_0} = h^2_{\alpha_0}.
\end{equation}

\noindent{\bf Step 2:}\ Assume that both $\alpha_0$ and $\alpha_1$ are
of Type $\frS$.  Since
\begin{equation}
\label{eq:t0I}
d^B\left( \ ({h}^1_{\alpha_0\alpha_1}) - \delta(t^1_{\alpha_0} )\
\right) = \delta(h^2_{\alpha_0}) - \delta(h^2_{\alpha_0}) = 0,
\end{equation}
by the cocycle condition, the local obstruction argument guarantees
the existence of $t^0_{\alpha_0\alpha_1} \in
\br{\bbz}{2}(U_{\alpha_1\alpha_1})$ such that
\begin{equation}
d^Bt^0_{\alpha_1\alpha_1} = ({h}^1_{\alpha_0\alpha_1}) -
\delta(t^1_{\alpha_0} ).
\end{equation}

\noindent{\bf Step 3:}\ Here we only consider $\alpha_0\cdots
\alpha_j$ in the nerve of the cover $\underline{\calu}$ for which all
$\alpha_k$'s are of Type $\frS$. Observe that
\begin{align*}
d^B\left( (h^0_{\alpha_0\alpha_1\alpha_2}) +
\delta(t^0_{\alpha_0\alpha_1}) \right) &=
- \delta ( h^1_{\alpha_0\alpha_1}  ) +  \delta ( d^B
t^0_{\alpha_0\alpha_1} ) \\
& = - \delta ( h^1_{\alpha_0\alpha_1}  ) +  \delta (
(h^1_{\alpha_0\alpha_1}) - \delta (t^1_{\alpha_0}) ) =
0.
\end{align*}
Applying $\tau$ one obtains
$d \left( (\tau h^0_{\alpha_0\alpha_1\alpha_2}) + \delta(\tau
t^0_{\alpha_0\alpha_1}) \right)
=0$, and hence Lemma \ref{lem:period} (the \emph{periods argument}) shows that
$$(\nu_{\alpha_0\alpha_1\alpha_2}):= (\tau
h^0_{\alpha_0\alpha_1\alpha_2}) + \delta(\tau
t^0_{\alpha_0\alpha_1})$$
consists of locally constant equivariant functions
$\nu_{\alpha_0\alpha_1\alpha_2} \colon U_{\alpha_0\alpha_1\alpha_2}
\to \bbz(2)$. Using \eqref{eq:nu} one writes
\begin{equation}
\label{eq:nudelta}
(\nu_{\alpha_0\alpha_1\alpha_2})= \delta \left( f^0_{\alpha_0\alpha_1}
+ \tau t^0_{\alpha_0\alpha_1}  \right).
\end{equation}

\noindent{\bf Step 4:}\ Denote $\hat{g}_{\alpha_0\alpha_1}:=
f^0_{\alpha_0\alpha_1} + \tau t^0_{\alpha_0\alpha_1}$ and
choose a square root $i$ of $-1$. Define
\begin{equation}
\label{eq:gI}
g_{\alpha_0\alpha_1} = \exp\left( \frac{1}{2\pi i}
\hat{g}_{\alpha_0\alpha_1} \right) \quad \text{and} \quad
\rho_{\alpha_0}=1.
\end{equation}
The cocycle condition
\begin{equation}
\label{eq:cocycleI}
g_{\alpha_0\alpha_1} g_{\alpha_1\alpha_2} g_{\alpha_2\alpha_0} = 1
\end{equation}
when all $\alpha$'s are of Type $\frS$ follows from
\eqref{eq:nudelta}. Since $\hat{g}_{\alpha_0\alpha_1} -
\overline{\sigma^* \hat{g}_{\alpha_0\alpha_1}} = 0$ one concludes that
\begin{equation}
\label{eq:trivI}
 {g}_{\alpha_0\alpha_1}\cdot \overline{\sigma^*
 {g}_{\alpha_0\alpha_1}} = 1 = \rho_\beta/\rho_\alpha
\end{equation}
on $U_{\alpha_0\alpha_1}$ and by definition
\begin{equation}
\label{eq:invI}
\overline{\sigma^* \rho_\alpha} = \rho_\alpha.
\end{equation}
\medskip

\noindent{\bf \underline{Type $P$ case:}}\ \hfill
\smallskip

Recall that if  $\alpha_k$ is of Type $P$, for all $k=1,\ldots,j$, then
so is $U_{\alpha_0\cdots \alpha_j}$.

\noindent{\bf Step 1:}\  If $\alpha_0$ is of Type $P$, then the {local
obstruction argument} (Lemma \ref{lem:loa}) gives some
$\hat{t}^1_{\alpha_0} \in  \br{\bbz}{2}^1(U_{\alpha_0}\times \frS)$
such that $d^B \hat{t}^1_{\alpha_0} = p^* h^2_{\alpha_0}$.

Given any $\alpha_0$ define
\begin{equation}
\label{eq:T}
T^1_{\alpha_0} =
\begin{cases}
p^*t^1_{\alpha_0} &, \text{ if } \alpha_0 \text{ is of Type $\frS$} \\
\hat{t}^1_{\alpha_0} &, \text{ if } \alpha_0 \text{ is of Type $P$},
\end{cases}
\end{equation}
and observe that $d^B T^1_{\alpha_0} = p^* h^2_{\alpha_0}$.

\noindent{\bf Step 2:}\ If either $\alpha_0$ or $\alpha_1$ is of Type $P$ then
\begin{equation}
d^B\left( \ (p^*{h}^1_{\alpha_0\alpha_1}) - \delta(T^1_{\alpha_0}) \
\right) = 0 .
\end{equation}
Since $\calb^{1,2}=0$, it follows from the local obstruction argument
(see Remark \ref{rem:explanation}) that one obtains
$\hat{t}^0_{\alpha_0\alpha_1} \in
\br{\bbz}{2}(U_{\alpha_0\alpha_1}\times \frS )$ such that
\begin{equation}
\label{eq:t0hat}
d^B\hat{t}^0_{\alpha_0\alpha_1} = (p^*{h}^1_{\alpha_0\alpha_1}) -
\delta(T^1_{\alpha_0}) .
\end{equation}

\noindent{\bf Step 3:}\ Here we consider those $\alpha_0\cdots
\alpha_j$ in the nerve of the cover $\underline{\calu}$ for which one
of the $\alpha_k$'s is of Type $P$. In this case one has $$d^B\left(
(p^*h^0_{\alpha_0\alpha_1\alpha_2}) +
\delta(\hat{t}^0_{\alpha_0\alpha_1}) \right) =0.$$
Applying $\tau$ one
obtains
$d \left( (\tau p^* h^0_{\alpha_0\alpha_1\alpha_2}) + \delta(\tau
\hat{t}^0_{\alpha_0\alpha_1}) \right)
=0$.
It follows from Lemma \ref{lem:period} that
$(\nu_{\alpha_0\alpha_1\alpha_2}) := (\tau p^*
h^0_{\alpha_0\alpha_1\alpha_2}) + \delta(\tau
\hat{t}^0_{\alpha_0\alpha_1})$ consists of equivariant locally constant functions
$U_{\alpha_0\alpha_1\alpha_2}\times \frS \to \bbz(2)$. In view of Remark \ref{rem:eq-vs-nonequ} we can also consider $\nu_{\alpha_0\alpha_1\alpha_2}$ as a non-equivariant locally constant function from $U_{\alpha_0\alpha_1\alpha_2}$ to $\bbz(2)$.

\noindent{\bf Step 4:}\ Let $\imath \colon U\times \frS \to U\times \frS$ be as in Remark~\ref{rem:eq-vs-nonequ}.
Observe that
\begin{equation}
d^B\left( \imath^*T^1_{\alpha_0}  -T^1_{\alpha_0}   \right) = \imath^* d^BT^1_{\alpha_0} - d^B T^1_{\alpha_0} = h^2_{\alpha_0} \imath^*p^*h^2_{\alpha_0} - h^2_{\alpha_0} = 0.
\end{equation}
It follows from the \emph{local obstruction argument} that
\begin{equation}
\label{eq:dBcl}
\imath^*T^1_{\alpha_0} - T^1_{\alpha_0} = d^B \tilde{\gamma}^0_{\alpha_0},
\end{equation}
for some  $\tilde{\gamma}^0_{\alpha_0} \in   \br{\bbz}{2}^0(U_{\alpha_0}\times \frS)$. In particular, one has $d^B\left( \tilde{\gamma}^0_{\alpha_0} + \imath^*\tilde{\gamma}^0_{\alpha_0}\right) =0$, and the \emph{period argument} shows that
\begin{equation}
\label{eq:per2}
\tau \left( \tilde{\gamma}^0_{\alpha_0}   \right)
+
\imath^*\tau \left( \tilde{\gamma}^0_{\alpha_0}   \right)
= \tilde{C}_{\alpha_0},
\end{equation}
for some equivariant locally constant function $\tilde{C}_{\alpha_0}$ on $U_{\alpha_0}\times \frS$ with values in $\bbz(2)$.

Define $\tilde{g}_{\alpha_0\alpha_1}:= p^*f^0_{\alpha_0\alpha_1} + \tau
\hat{t}^0_{\alpha_0\alpha_1}$.
Then
\begin{align}
\label{eq:tildeg}
( \tilde{g}_{\alpha_0\alpha_1} )-\imath^* (\tilde{g}_{\alpha_0\alpha_1}) - \delta (\tau(\tilde{\gamma}^0_{\alpha_0}))
 & =
\tau \hat{t}^0_{\alpha_0\alpha_1}   -
\imath^*  \tau \hat{t}^0_{\alpha_0\alpha_1} - \delta (\tau(\tilde{\gamma}^0_{\alpha_0}))   \\
& =
\tau \left( \hat{t}^0_{\alpha_0\alpha_1}   -
\imath^*\hat{t}^0_{\alpha_0\alpha_1}
- \delta (\tilde{\gamma}^0_{\alpha_0})  \right) . \notag
\end{align}

On the other hand, equations \eqref{eq:t0hat} and \eqref{eq:dBcl}
give
\begin{align}
d^B \left( \hat{t}^0_{\alpha_0\alpha_1}   -
\imath^*\hat{t}^0_{\alpha_0\alpha_1}
- \delta (\tilde{\gamma}^0_{\alpha_0}) \right)  &=
 \delta\left( - T^1_{\alpha_0} + \imath^*T^1_{\alpha_0}   - d^B (\tilde{\gamma}^0_{\alpha_0}) \right) \\
 & = 0,\notag
\end{align}
and hence Lemma \ref{lem:period} shows that
$( \tilde{g}_{\alpha_0\alpha_1} )-\imath^* (\tilde{g}_{\alpha_0\alpha_1}) - \delta (\tau(\tilde{\gamma}^0_{\alpha_0}))
$
consists of equivariant locally constant functions from $U_{\alpha_0\alpha_1}\times \frS $ to $\bbz(2)$.

Using the notation in Remark \ref{rem:eq-vs-nonequ}, define $\hat{g}_{\alpha_0\alpha_1} = F(\tilde{g}_{\alpha_0\alpha_1}) = f^0_{\alpha_0\alpha_1} + F(\tau(\hat{t}^0_{\alpha_0\alpha_1} ))$, and
$\gamma^0_{\alpha_0}:= F\tau(\tilde{\gamma}^0_{\alpha_0})$. With the same choice of square root $i$ of $-1$ as in the Type $\frS$ case, define
\begin{equation}
g_{\alpha_0\alpha_1} = \exp\left( \frac{1}{2\pi i}
\hat{g}_{\alpha_0\alpha_1} \right) \quad \text{and} \quad
\rho_{\alpha_0}=\exp\left( \frac{1}{2\pi i} \gamma^0_{\alpha_0}
\right).
\end{equation}
Step $3$ shows that
\begin{equation}
\label{eq:cocycleII}
g_{\alpha_0\alpha_1} g_{\alpha_1\alpha_2} g_{\alpha_2\alpha_0} = 1,
\end{equation}
and \eqref{eq:tildeg} together with subsequent remarks and properties of the functor $F$ show that \begin{equation}
\label{eq:trivII}
 {g}_{\alpha_0\alpha_1}\cdot \overline{\sigma^*
 {g}_{\alpha_0\alpha_1}} = \rho_{\alpha_1}/\rho_{\alpha_0}
\end{equation}
on $U_{\alpha_0\alpha_1}$.
Finally, \eqref{eq:per2} and Remark \ref{rem:eq-vs-nonequ} give
$ F(\tau(\tilde{\gamma}^0_{\alpha_0})) + \overline{\sigma^*F(\tau(\tilde{\gamma}^0_{\alpha_0}))} = F(\tilde{C}_{\alpha_0}):= C_{\alpha_0}$. In particular, $C_{\alpha_0}$ is an equivariant locally constant function on $U_{\alpha_0}$ with values in $\bbz(2)$, and hence
\begin{align*}
\left(\frac{1}{2\pi i} \gamma^0_{\alpha_0}\right) - \overline{\sigma^*\left( \frac{1}{2\pi i} \gamma^0_{\alpha_0}\right)} & = \frac{1}{2\pi i} \left(F(\tau(\tilde{\gamma}^0_{\alpha_0})) + \overline{\sigma^*F(\tau(\tilde{\gamma}^0_{\alpha_0}))}\right) \\
& = \frac{1}{2\pi i} C_{\alpha_0} \ \ \in \underline{\bbz(1)}(U_{\alpha_0}).
\end{align*}
This gives
\begin{equation}
\label{eq:invII}
\overline{\sigma^* \rho_{\alpha_0}} = \rho_{\alpha_0}.
\end{equation}
\medskip
In other words $(
(g_{\alpha_0\alpha_1}), (\rho_{\alpha_0}))$ defines a $1$-cocycle
in $\check{C}^*(\underline{\calu},G^0\to G^1)$. It is easy to
check that this process would send a $\check{\text{C}}$ech
coboundary to a coboundary in
$\check{C}^*(\underline{\calu},G^0\to G^1)$, realizing the isomorphism
\eqref{eq:explicit}.

\subsection{The proof}

We now prove Theorem \ref{thm:h22}.

\begin{proof}
Let $\underline{\calu}$ be an equivariant good cover of $X(\bbc)$ and
fix a cocycle $\mbc = (c^{r,s})_{r+s=2} \in \check{C}^2(\underline{\calu};\de{\bbz}{2})$ representing a class in
$\dcr{2}{2}{X}$. Since $c^{r,s} = 0$ for $r>2$, the cocycle condition
is given by
\begin{equation}
Dc^{2,0} = 0 \quad \text{and} \quad Dc^{r-1,s+1} = (-1)^s \delta
c^{r,s}, r\leq 1.
\end{equation}
Recall that $\de{\bbz}{2} = \operatorname{Cone}\left(
\br{\bbz}{2}\oplus F^2\cale^* \xrightarrow{\iota_2} \cale^*
\right)[-1]$.

Write
\begin{align}
c^{2,0} & = \left(\ (h^2_{\alpha_0}),\ (\omega^2_{\alpha_0}),\
(\theta^1_{\alpha_0})\ \right) \\
c^{1,1} & = \left( \  (h^1_{\alpha_0\alpha_1}),\ 0,\
(\theta^0_{\alpha_0\alpha_1})\ \right) \\
c^{0,2} & = \left( \  (h^0_{\alpha_0\alpha_1\alpha_2}),\ 0,\
0\ \right) \\
c^{-i,2+i} & = \left( \ (h^{-i}_{\alpha_0\ldots\alpha_{2+i}}),\ 0,\ 0\
\right), \
\ i\geq 1.
\end{align}

Since $Dc^{2,0}=0$, one has for all $\alpha_0 \in \Lambda:$
\begin{align}
d^Bh^2_{\alpha_0} & = 0 , \text{ (by definition) }; \\
d\omega^2_{\alpha_0} & = 0 ;\\
\tau(h^2_{\alpha_0}) - \omega^2_{\alpha_0} + d \theta^1_{\alpha_0}
\label{eq:triple}
&= 0.
\end{align}

Similarly, $Dc^{1,1} = \delta c^{2,0}$ gives the identities:
\begin{align}
(d^Bh^1_{\alpha_0\alpha_1}) & = \delta(h^2_{\alpha_0}) \\
0 & = \delta(\omega_{\alpha_0}) \\
- ( \tau h^1_{\alpha_0\alpha_1}) - (d\theta^0_{\alpha_0\alpha_1})&  =
\delta(\theta^1_{\alpha_0}),
\end{align}
$Dc^{0,2} = - \delta c^{1,1}$ gives:
\begin{align}
(d^Bh^0_{\alpha_0\alpha_1\alpha_2}) & = - \delta(h^1_{\alpha_0\alpha_1}) \\
(\tau h^0_{\alpha_0\alpha_1\alpha_2}) &=
\delta(\theta^0_{\alpha_0\alpha_1}) \label{eq:theta0}
\end{align}
and for all $r\leq 0$ one has :
\begin{align}
(d^B h^{-r-1}_{\alpha_0\ldots \alpha_{3+r}} ) & = (-1)^{r}
  \delta(h^{-r}_{\alpha_0\ldots \alpha_{r+2}}).
\end{align}

The assignment $\mbc:=(c^{r,s})\mapsto \mbh:= (h^i_{\alpha_0\cdots
  \alpha_j}) $
gives the cycle map from Deligne to Bredon cohomology. Assume that
$\mbc \in \ker{\Psi}$, hence $\Psi ([\mbc] ) = \sig([\mbh]) = 0$.
Therefore the $\check{\text{C}}$ech-Bredon cocycle $\mbh$ is
``unobstructed'' and we can apply the arguments in {\sc Type $\frS$
Case} above;  see \eqref{eq:t1I}. Furthermore, the
data in the $\check{\text{C}}$ech-Deligne complex gives a natural
choice for the $f^0_{\alpha_0\alpha_1}$ introduced in
\eqref{eq:nu}. More precisely, one can choose
$f^0_{\alpha_0\alpha_1} := \theta^0_{\alpha_0\alpha_1}$; cf.
\eqref{eq:theta0}.

It follows that one can take $g_{\alpha_0\alpha_1} := \exp\left(
\frac{1}{2\pi i} \hat{g}_{\alpha_0\alpha_1}  \right) $, with
\begin{equation}
\label{eq:ghat3}
\hat{g}_{\alpha_0\alpha_1} = \theta^0_{\alpha_0\alpha_1} +
\tau(t^0_{\alpha_0\alpha_1}),
\end{equation}
to obtain a cocycle for the line bundle associated to $[\mbh]$;
cf. \eqref{eq:gI}.
However, one can find an equivalent holomorphic cocycle as follows.

Write the $1$-form $a_{\alpha_0} := \theta^1_{\alpha_0} + \tau
t^1_{\alpha_0} = a^{1,0}_{\alpha_0} + a^{0,1}_{\alpha_0}$ as a sum of
their $(1,0)$ and $(0,1)$ parts, respectively, where
$t^1_{\alpha_0\alpha_1}$ is introduced in \eqref{eq:t1I}. Since $d
a_{\alpha_0} = d\left( \theta^1_{\alpha_0} + \tau t^1_{\alpha_0}
\right) = d\theta^1_{\alpha} + d \tau(h^2_{\alpha_0}) =
\omega^2_{\alpha_0}$, cf. \eqref{eq:t1I} and \eqref{eq:triple}, and
$\omega^2_{\alpha_0}$ is a form of type $(2,0)$ one concludes that
\begin{align}
\bar{\partial} a^{0,1}_{\alpha_0} & = 0 \label{eq:dbar1} \\
{\partial} a^{0,1}_{\alpha_0} & = - \bar{\partial} a^{1,0}_{\alpha_0}
\label{eq:dbar2} \\
\partial a^{1,0}_{\alpha_0} & = \omega^2_{\alpha_0}
\end{align}
It follows from the $\bar\partial$-Poincar\'e lemma that one can find
$f^0_{\alpha_0} \in \cale^0(U_{\alpha_0})$ (equivariant) such that
\begin{equation}
\label{eq:dbarsol}
\bar\partial f^0_{\alpha_0} = a^{0,1}_{\alpha_0}.
\end{equation}
Now define
\begin{equation}
\label{eq:gtilde}
(\tilde{g}_{\alpha_0\alpha_1}) := (\hat{g}_{\alpha_0\alpha_1}) +
\delta( f^0_{\alpha_0} ).
\end{equation}
Hence,
\begin{align*}
\bar\partial \tilde{g}_{\alpha_0\alpha_1}  & = \left\{  d
\hat{g}_{\alpha_0\alpha_1}  \right\}^{0,1} +\delta( \bar\partial
f^0_{\alpha_0} ) =
\left\{ d \theta^0_{\alpha_0\alpha_1} + \tau (d^B
t^0_{\alpha_0\alpha_1})  \right\}^{0,1} +\bar\partial f^0_{\alpha_0}
\\
& = \left\{ d \theta^0_{\alpha_0\alpha_1} + \tau (
h^1_{\alpha_0\alpha_1} ) - \delta(t^1_{\alpha_0})  \right\}^{0,1}
+\delta( \bar\partial f^0_{\alpha_0}) \\
& = \left\{ - \delta (\theta^1_{\alpha_0}) - (\tau
h^1_{\alpha_0\alpha_1}) + (\tau h^1_{\alpha_0\alpha_1}) -
\delta(t^1_{\alpha_0})  \right\}^{0,1} +\delta( \bar\partial
f^0_{\alpha_0} ) \\
& = - \delta \left(  \theta^1_{\alpha_0} + \tau t^1_{\alpha_0}
\right)^{0,1} +\delta( \bar\partial f^0_{\alpha_0} ) \\
& = -\delta ( a^{0,1}_{\alpha_0} )  +\delta( \bar\partial
f^0_{\alpha_0} ) = 0,
\end{align*}
cf. \eqref{eq:dbarsol}.

Defining
\begin{equation}
\label{eq:gab}
g_{\alpha_0\alpha_1} := \exp\left( \frac{1}{2\pi i} \tilde{g}_{\alpha_0\alpha_1} \right)
\end{equation}
one obtains a holomorphic structure $(g_{\alpha_0\alpha_1})$ for
$L$. See Remark \ref{rem:cocycle1}(i).

Now, define
\begin{equation}
\label{eq:varpsi}
\psi_{\alpha_0} := \frac{1}{2\pi i}\left( \partial f_{\alpha_0} - a^{1,0}_{\alpha_0}  \right) = \frac{1}{2\pi i} \left( d f_{\alpha_0} - \theta^1_{\alpha_0} - \tau t^1_{\alpha_0} \right).
\end{equation}
It follows from \eqref{eq:dbarsol} and \eqref{eq:dbar2} that $
\psi_{\alpha_0}$ is a holomorphic $1$-form. Furthermore,
\begin{align*}
\left( \frac{dg_{\alpha_0\alpha_1}}{g_{\alpha_0\alpha_1}} \right)  & =
\frac{1}{2\pi i} \left\{ (d \hat{g}_{\alpha_0\alpha_1}) + \delta( d
f_{\alpha_0} ) \right\} \\
 & =
\frac{1}{2\pi i} \left\{ ( d \theta^0_{\alpha_0\alpha_1}) + (\tau d^B
t^0_{\alpha_0\alpha_1}) + \delta ( \partial f_{\alpha_0}) + \delta (
\bar \partial f_{\alpha_0})   \right\} \\
&=
\frac{1}{2\pi i} \left\{  - \delta(\theta^1_{\alpha_0}) -( \tau
h^1_{\alpha_0\alpha_1}) + \tau\left( (h^1_{\alpha_0\alpha_1}) -
\delta(t^1_{\alpha_0}) \right) \right. \\
& \ \ \left.  + \delta(\partial f_{\alpha_0}) +
\delta ( \bar\partial f_{\alpha_0} )  \right\} \\
&=
\frac{1}{2\pi i} \left\{ - \delta ( \underbrace{(\theta^1_{\alpha_0})
  + (\tau t^1_{\alpha_0})}_{a_{\alpha_0}} )  + \delta(\partial
f_{\alpha_0}) + \delta ( a^{0,1}_{\alpha_0} )   \right\} \\
&=
\frac{1}{2\pi i} \left\{ \delta( \partial f_{\alpha_0} -
a^{1,0}_{\alpha_0} )   \right\} \\
& =
\delta ( \psi_{\alpha_0}).
\end{align*}
See Remark \ref{rem:cocycle1}(ii).

\begin{remark}
\label{rem:omega}
It follows from \eqref{eq:varpsi}, \eqref{eq:t1I} and \eqref{eq:triple} that
\begin{align*}
d\psi_{\alpha_0} & = - \frac{1}{2\pi i} \left( d \theta^1_{\alpha_0} + d \tau(t^1_{\alpha_0})  \right)
= - \frac{1}{2\pi i} \left( d \theta^1_{\alpha_0} +  \tau(d^B t^1_{\alpha_0})  \right) \\
& = - \frac{1}{2\pi i} \left( d \theta^1_{\alpha_0} +  \tau(h^2_{\alpha_0})  \right)
= - \frac{1}{2\pi i} \omega^2_{\alpha_0}.
\end{align*}
\end{remark}

Therefore, $(\psi_{\alpha_0})$ defines a holomorphic connection
$\nabla$ on the holomorphic line bundle $L$ associated to
$(g_{\alpha_0\alpha_1})$. Since both $\hat{g}_{\alpha_0\alpha_1}$ and
$f_{\alpha_0}$ are equivariant functions, it follows that
$$
\overline{\sigma^* g_{\alpha_0\alpha_1}} \cdot g_{\alpha_0\alpha_1} = 1,
$$
and this defines a holomorphic isomorphism $\mbq \colon L \otimes
\overline{\sigma^* L} \to \bone$
which becomes a positive definite hermitian form on
$X(\bbr)$. (See Remark \ref{rem:cocycle1}(iii).) Finally, the identity
$\overline{\sigma^* \psi_{\alpha_0}} + \psi_{\alpha_0} = 0$ shows that
$\mbq$ is parallel with respect to the connection on $L \otimes
\overline{\sigma^* L}$ induced by $\nabla$.
See Remark \ref{rem:cocycle1}(iv).

We have thus associated to $\mbc$, with $[\mbc] \in \ker{\Psi}$, a
triple $(L,\mbq,\nabla)$ of elements satisfying the conditions in
Definition \ref{def:rlbc}. This gives a well-defined homomorphism
\begin{equation}
\label{eq:Phi}
\Phi \colon  \ker{\Psi} \longrightarrow \rlbc{X}.
\end{equation}
We now proceed to show that this is in fact an isomorphism.
\medskip

\begin{lemma}
\label{lem:rem}
There is a natural transformation $\Theta \colon \rlbc{-} \to
\bcz{2}{2}{-}$ such that for all $U \in \gman$ the following diagram
commutes
$$
\xymatrix{
\ker{\Psi_U}  \ar[r]^-{i}  \ar[d]_-{\Phi} &  \dcr{2}{2}{U} \ar[d]^{\varrho} \\
\rlbc{U} \ar[r]_{\Theta_U} & \bcz{2}{2}{U}.
}
$$
\end{lemma}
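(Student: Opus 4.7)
The first step is to define $\Theta$ as the obvious forgetful map. Given a triple $\la L, \nabla, \mbq \ra \in \rlbc{U}$, I would forget the holomorphic structure on $L$ and the connection $\nabla$; since $\mbq \colon L \otimes \overline{\sigma^*L}\to \bone$ is in particular a smooth isomorphism of Real line bundles, the pair $(L^{\mathrm{sm}}, \mbq)$ represents a class in $\call_2(U)$. Composing with the identification $\call_2(U) \cong \bcz{2}{2}{U}$ of Proposition \ref{prop:h22br} gives the required $\Theta_U$. Naturality with respect to a morphism $f \colon V \to U$ in $\rav$ is immediate, since both the forgetful functor and the tensor-product construction of $\call_2$ commute with pullback.

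Next I would verify the commutativity of the square at the level of \v{C}ech cocycles. Fix an equivariant good cover $\underline{\calu}$ and a cocycle $\mbc = (c^{r,s})_{r+s=2}$ representing a class $[\mbc] \in \ker{\Psi_U}$, using the notation of the proof of Theorem \ref{thm:h22}. The cycle map $\varrho$ sends $[\mbc]$ to the Bredon class $[\mbh]$ of the component $\mbh = (h^i_{\alpha_0\cdots\alpha_j})$. Since $[\mbc] \in \ker \Psi$, the cocycle $\mbh$ is unobstructed and the \textsc{Type $\frS$} formulas apply globally: the explicit description of $\bcz{2}{2}{U} \cong \call_2(U)$ given in the preceding subsection produces the representative $g_{\alpha_0\alpha_1}^{\mathrm{Br}} = \exp\bigl(\frac{1}{2\pi i}\hat{g}_{\alpha_0\alpha_1}\bigr)$ with $\hat{g}_{\alpha_0\alpha_1} = \theta^0_{\alpha_0\alpha_1} + \tau(t^0_{\alpha_0\alpha_1})$ and trivializations $\rho_{\alpha_0} = 1$.

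On the other hand, $\Theta_U(\Phi([\mbc])) = \Theta_U(\la L, \nabla, \mbq \ra)$ corresponds via $\call_2(U) \cong \bcz{2}{2}{U}$ to the smooth line bundle with cocycle $g_{\alpha_0\alpha_1} = \exp\bigl(\frac{1}{2\pi i}\tilde{g}_{\alpha_0\alpha_1}\bigr)$, where $\tilde{g}_{\alpha_0\alpha_1} = \hat{g}_{\alpha_0\alpha_1} + \delta(f^0_{\alpha_0})$ as in equation \eqref{eq:gtilde}, and Real structure encoded by $\overline{\sigma^*g_{\alpha_0\alpha_1}}\cdot g_{\alpha_0\alpha_1} = 1$. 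The key observation is that these two cocycles differ by the coboundary $\delta\bigl(\exp(\frac{1}{2\pi i}f^0_{\alpha_0})\bigr)$ in $\check{C}^*(\underline{\calu}, G^0 \to G^1)$, where $f^0_{\alpha_0}\in \cale^0(U_{\alpha_0})$ is the equivariant smooth function supplied by the $\bar\partial$-Poincar\'e lemma in the proof of Theorem \ref{thm:h22}. Since $f^0_{\alpha_0}$ is equivariant, $\exp(\frac{1}{2\pi i}f^0_{\alpha_0})$ lies in $G^0(U_{\alpha_0})$ and the coboundary respects the Real trivializations on both sides; thus both cocycles determine the same class in $\check{\bbh}^1(U_{\mathrm{eq}}; G^0 \to G^1) \cong \call_2(U)$.

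The main technical point is the cocycle bookkeeping: one has to carefully unwind the explicit isomorphism $\bcz{2}{2}{U} \cong \call_2(U)$ of the preceding subsection and compare it with the holomorphic-cocycle formulas built in the proof of Theorem \ref{thm:h22}. The essential ingredient is already embedded in that proof, namely the equivariance of $f^0_{\alpha_0}$; this is exactly what guarantees that modifying $\hat{g}_{\alpha_0\alpha_1}$ by $\delta(f^0_{\alpha_0})$ yields an equivalent element of $\call_2(U)$, and not merely an isomorphic underlying smooth line bundle.
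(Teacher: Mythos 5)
Your proposal is correct and takes the same approach as the paper---defining $\Theta_U$ as the forgetful map $\la L, \nabla, \mbq\ra \mapsto [L, \mbq]$ via the identification $\call_2(U)\cong\bcz{2}{2}{U}$ of Proposition \ref{prop:h22br}---while additionally supplying the cocycle-level verification of the square's commutativity, which the paper's one-line proof leaves tacit. The identity you record, that $\exp\bigl(\tfrac{1}{2\pi i}\hat{g}_{\alpha_0\alpha_1}\bigr)$ and $\exp\bigl(\tfrac{1}{2\pi i}\tilde{g}_{\alpha_0\alpha_1}\bigr)$ differ by the $\check{\text{C}}$ech coboundary of $\exp\bigl(\tfrac{1}{2\pi i}f^0_{\alpha_0}\bigr)$ with the equivariance of $f^0_{\alpha_0}$ forcing the $G^1$-component of that coboundary to be trivial, is exactly the computation needed, and is consistent with the paper's explicit description of $\varrho$ as $\mbc\mapsto\mbh$ and of $\Phi$ as modifying $\hat{g}$ by $\delta(f^0_{\alpha_0})$.
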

\begin{proof}
Using the interpretation of $\bcz{2}{2}{U}$ as equivalence classes
$[L,\mbq]$ of pairs as in Proposition \ref{prop:h22br}, then
$\Theta_U$ simply sends $\la L,\nabla, \mbq \ra$ to $[L,\mbq]$.
\end{proof}
\smallskip

\noindent{\bf Injectivity of $\Phi$:}\ Let $\mbc' = (\mbh', \underline{\omega'},
\underline{\theta'})$ be a $\check{\text{C}}$ech cocyle representing a
class $[\mbc'] \in \ker{\Psi_U}$, and suppose that $\Phi_U([\mbc']) =
0$. Since $0=\Theta_U\circ \Phi_U ([\mbc']) = \varrho \circ i ([\mbc])$,
one concludes that
$[\mbh'] = 0 \in \bcz{2}{2}{U}$ and hence, one can find $\mbt \in
\operatorname{Tot}(\check{C}^*(\underline{\calu},\br{\bbz}{2}))^1$
such that $\check{d}^B\mbt = \mbh$. It follows that $\mbc := \mbc' -
\check{D}(\mbt,0,0) = (0,\underline{\omega},\underline{\theta})$
represents $[\mbc']$ and is simply given by
$(\omega_{\alpha_0}^2), (\theta^0_{\alpha_0\alpha_1})$ and
$(\theta_{\alpha_0}^1)$, where
$\omega_{\alpha_0}^2 \in F^2\cale^2(U)$, $\theta^0_{\alpha_0\alpha_1}
\in \cale^0(U_{\alpha_0\alpha_1})$ and $\theta^1_{\alpha_0} \in
\cale^1(U_{\alpha_0})$.

Let $(g_{\alpha_0\alpha_1}), (\psi_{\alpha_0})$ be constructed as
in \eqref{eq:gab} and \eqref{eq:varpsi}, representing
$\Phi([\mbc])$, and observe that  $\mbh = 0$ allows one to take
$t^1_{\alpha_0}=0$, $t^0_{\alpha_0\alpha_1}=0$ and
$f^0_{\alpha_0\alpha_1}=0$ (cf. STEPS 1 and 2, and \eqref{eq:nu})
in their definition. Assuming that $\Phi([\mbc])=0$, one can find
$(\rho_{\alpha_0})$ satisfying:
\begin{align}
\rho_{\alpha_0}\cdot \overline{\sigma^*\rho_{\alpha_0}} & = 1, \quad
\text{and} \quad \rho_{\alpha_0} \text{ is holomorphic} \\
(g_{\alpha_0\alpha_1}) & = \delta(\rho_{\alpha_0}) \\
\psi_{\alpha_0} &= \frac{d\rho_{\alpha_0}}{\rho_{\alpha_0}}.
\end{align}
The latter equation gives $0 = d\psi_{\alpha_0} = -\frac{1}{2\pi
i} \omega^2_{\alpha_0}$; cf. Remark \ref{rem:omega}.

Now, the cocycle condition on $\mbc= (0,0,\underline{\theta})$ boils down to
\begin{align}
( d \theta^1_{\alpha_0}) & = 0 \\
(d\theta^0_{\alpha_0\alpha_1}) + \delta(\theta_{\alpha_0}^1) & = 0 \\
\delta(\theta^0_{\alpha_0\alpha_1}) & = 0.
\end{align}

Choose $\hrz $ such that $\exp{\hrz} = \rho_{\alpha_0}$ and $\hrz +
\overline{\sigma^* \hrz} =0$, and let $\fz$ be as in
\eqref{eq:dbarsol}.
Hence, by definition one has
$$
(g_{\alpha_0\alpha_1}) = \left( \exp \frac{1}{2\pi i} \left\{
(\theta^0_{\alpha_0\alpha_1} )+ \delta(\fz) ) \right\}   \right) =
\delta (\rho_{\alpha_0})
$$
and
\begin{equation}
\label{eq:hrz1}
\psi_{\alpha_0} = \frac{1}{2\pi i } \left( d\fz -
\theta^1_{\alpha_0} \right) =
\frac{d\rho_{\alpha_0}}{\rho_{\alpha_0}} = d \hrz.
\end{equation}
It follows that one can find $n_{\alpha_0\alpha_1} \in
\uZ(U_{\alpha_0\alpha_1})$ such that
\begin{equation}
\label{eq:hrz2}
\frac{1}{2\pi i} \left\{ (\theta^0_{\alpha_0\alpha_1} )+
\delta(\fz) ) \right\} = \delta(\hrz) + (2\pi i \
n_{\alpha_0\alpha_1}).
\end{equation}
Using Lemma \ref{lem:hom} define
\begin{equation}
\xi_{\alpha_0\alpha_1} := \xi_{(2\pi i)^2} (n_{\alpha_0 \alpha_1})
\in \br{\bbz}{2}^0(U_{\alpha_0\alpha_1})
\end{equation}
and define
\begin{equation}
\label{eq:b0}
b^0_{\alpha_0} = 2\pi i \hrz - \fz \in \cale^0(U_{\alpha_0}).
\end{equation}
It is easy to see that $\delta(\xi_{\alpha_0\alpha_1}) = 0$ and
that \eqref{eq:hrz2} and \eqref{eq:b0} give
\begin{equation}
\label{eq:coboundary0}
\tau(\xi_{\alpha_0\alpha_1}) + \delta(b^0_{\alpha_0}) =
(\theta^0_{\alpha_0\alpha_1}).
\end{equation}
One obtains a $1$-cochain $\mbt := (
(\xi_{\alpha_0\alpha_1} ),\ 0,\ ( b^0_{\alpha_0} ) ) \in
\operatorname{Tot}\left(
\check{C}^*(\underline{\calu},\br{\bbz}{2} ) \right)^1$ that
satisfies $ \check{D} (\mbt) = \mbc; $ cf. \eqref{eq:hrz1},
\eqref{eq:b0}, \eqref{eq:coboundary0} and \eqref{eq:varpsi}.
Therefore $[\mbc] = 0$, thus showing the
injectivity of $\Phi$.
\medskip

\noindent{\bf Surjectivity of $\Phi$:}\
\smallskip

Represent $\la L, \nabla, \mbq \ra \in \rlbc{X}$  by a cocycle
$(G_{\alpha_0\alpha_1}), (\psi_{\alpha_0})$ satisfying the
conditions of Remark \ref{rem:cocycle1}, and let $\mbh \in
\operatorname{Tot}\left(
\check{C}^*(\underline{\calu};\br{\bbz}{2} )\right)^2$ be a
cocycle representing the element $\Theta ( \la L, \nabla, \mbq \ra
) = [L,\mbq] \in \bcz{2}{2}{X}$. Note that this cocycle is
``unobstructed'', in the sense of {\sc Type $\frS$ case}. Hence one can
find $t^1_{\alpha_0}, t^0_{\alpha_0\alpha_1}$ and
$f^0_{\alpha_0\alpha_1}$ as in \eqref{eq:t1I}, \eqref{eq:t0I} and
\eqref{eq:nu}, respectively. By definition,
$\hat{g}_{\alpha_0\alpha_1} := f^0_{\alpha_0\alpha_1} +
\tau(t^0_{\alpha_0\alpha_1})$ so that $g_{\alpha_0\alpha_1} :=
\exp\left( \frac{1}{2\pi i} \hat{g}_{\alpha_0\alpha_1}\right)$ is
a cocycle representing the isomorphism class of $L$ as a smooth
line bundle and satisfying the condition
$g_{\alpha_0\alpha_1}\cdot \overline{\sigma^*g_{\alpha_0\alpha_1}}
=1$, which gives the desired $\mbq$, positive definite over
$X(\bbr)$.

Therefore, one can find smooth functions $\rho_{\alpha_0}$ such that
\begin{equation}
\label{eq:local}
\left( \frac{G_{\alpha_0\alpha_1}}{g_{\alpha_0\alpha_1}} \right) =
\delta( \rho_{\alpha_0}) \quad \text{and} \quad \rho_{\alpha_0}\cdot
\overline{\sigma^* \rho_{\alpha_0}} =1.
\end{equation}

Now, find $\hGG$ and $\hrz$ such that $\exp{\hGG} =
G_{\alpha_0\alpha_1},\ \exp\hrz = \rho_{\alpha_0}$ and satisfying
$\hGG + \overline{\sigma^* \hGG} =0, \ \hrz + \overline{\sigma^* \hrz}
= 0$.

It follows from \eqref{eq:local} that one can find
$n_{\alpha_0\alpha_1} \in \uZ(U_{\alpha_0\alpha_1})$ such that
$(\hGG) =  \frac{1}{2\pi i}\left\{ \hat{g}_{\alpha_0\alpha_1} + \delta
(\hrz) \right\} + (2\pi i) n_{\alpha_0\alpha_1}$. Hence:
\begin{equation}
\label{eq:n}
2\pi i \hGG = f^0_{\alpha_0\alpha_1} + \tau t^0_{\alpha_0\alpha_1} +
\delta(\hrz) + (2\pi i)^2 n_{\alpha_0\alpha_1}.
\end{equation}

Want to find $\omega_{\alpha_0}, \theta_{\alpha_0}^1,
\theta^0_{\alpha_0\alpha_1}$ satisfying:

\begin{align}
d\omega_{\alpha_0} & = 0 \label{eq:f1} \tag{C.1} \\
\delta (\omega_{\alpha_0}) & = 0 \label{eq:f2}\tag{C.2}\\
\delta ( \theta^0_{\alpha_0\alpha_1}) & = (\tau
h^0_{\alpha_0\alpha_1\alpha_2}) \label{eq:f3}\tag{C.3} \\
(d \theta^0_{\alpha_0\alpha_1}) + \delta (\theta^1_{\alpha_0}) + \tau
(h^1_{\alpha_0\alpha_1}) & = 0 \label{eq:f4}\tag{C.4} \\
\omega_{\alpha_0} &= d\theta^1_{\alpha_0} + \tau
h^2_{\alpha_0}. \label{eq:f5} \tag{C.5}
\end{align}
Note that \eqref{eq:f5} implies \eqref{eq:f1}.

It follows from \eqref{eq:n} that
\begin{align*}
2\pi i \delta(\psi_{\alpha_0}) & = 2 \pi i d \hGG  =
df^0_{\alpha_0\alpha_1} + \tau \left( d^Bt^0_{\alpha_0\alpha_1}
\right) + \delta(d \hrz) \\
& = df^0_{\alpha_0\alpha_1} + \tau \left( h^1_{\alpha_0\alpha_1} -
\delta( t^1_{\alpha_0}) \right) + \delta(d \hrz) \\
& = df^0_{\alpha_0\alpha_1} + \tau h^1_{\alpha_0\alpha_1} + \delta\left(
- \tau t^1_{\alpha_0} + d \hrz \right).
\end{align*}
Therefore,
\begin{equation}
\label{eq:therefore}
0 = ( df^0_{\alpha_0\alpha_1} ) + ( \tau h^1_{\alpha_0\alpha_1} ) +
\delta\left( -2\pi i \psi_{\alpha_0} - \tau t^1_{\alpha_0} + d \hrz
\right)
\end{equation}

Define
\begin{align}
\theta^1_{\alpha_0} & := -2\pi i \psi_{\alpha_0} - \tau t^1_{\alpha_0}
+ d \hrz  \label{eq:tt1} \\
\theta^0_{\alpha_0\alpha_1} & := f^0_{\alpha_0\alpha_1} \label{eq:tt2}\\
\omega_{\alpha_0} & := - 2 \pi i d \psi_{\alpha_0} \label{eq:tt3},
\end{align}
and observe that the latter is an invariant closed form of Hodge type
$(2,0)$, since $\psi_{\alpha_0}$ is holomorphic. We now proceed to
show that these forms satisfy \eqref{eq:f1}--\eqref{eq:f5}.

\begin{itemize}
\item $d\theta^1_{\alpha_0} = - 2\pi i d\psi_{\alpha_0} - \tau( d^B
t^1_{\alpha_0}) = \omega_{\alpha_0} - \tau(h^2_{\alpha_0})$. This
gives \eqref{eq:f5} and \eqref{eq:f1}, as well.
\item $\delta(\omega_{\alpha_0}) = - 2 \pi i \delta ( d
\psi_{\alpha_0} ) = - 2 \pi i d \delta(\psi_{\alpha_0}) = - 2 \pi i d
\left( \frac{d G_{\alpha_0\alpha_1}}{G_{\alpha_0\alpha_1}} \right) =
\\ - 2 \pi i d ( d \hGG )=0 $. This gives \eqref{eq:f2}.
\item $\delta( \theta^0_{\alpha_0\alpha_1}) = \delta(
f^0_{\alpha_0\alpha_1} ) = (\tau
h^0_{\alpha_0\alpha_1\alpha_2})$. This gives \eqref{eq:f3}.
\item It follows from \eqref{eq:therefore} and \eqref{eq:tt1} that $ 0
= (df^0_{\alpha_0\alpha_1}) + ( \tau h^1_{\alpha_0\alpha_1}) + \delta(
\theta^1_{\alpha_0\alpha_1} )$ which, together with \eqref{eq:tt2},
implies \eqref{eq:f4}.
\end{itemize}

It follows that $\mbc := ( \mbh, \underline{\omega},
\underline{\theta})$ gives a cocycle in $\operatorname{Tot}\left(
\check{C}^*(\underline{\calu};\de{\bbz}{2}) \right)^2$ such that
$[\mbc] \in \ker{\Psi}$.

Finally, one needs to verify that
$\Phi( [\mbc] ) = \la L,\nabla,\mbq \ra$. At this point, this is a
mere tautology. Following the steps in the definition of $\Phi$, one
constructs $(\gamma_{\alpha_0\alpha_1}), (\xi_{\alpha_0})$
representing $\Phi([\mbc])$.

We first find $f^0_{\alpha_0} \in \cale^0(U_{\alpha_0})$
such that $\bar{\partial} f^0_{\alpha_0} = \left\{
\theta^1_{\alpha_0} + \tau (t^1_{\alpha_0}) \right\}^{0,1}$; cf.
\eqref{eq:dbarsol}. Note that, by definition \eqref{eq:tt1}, one
has $ \theta^1_{\alpha_0} + \tau( t^1_{\alpha_0} ) = -2 \pi i
\psi_{\alpha_0} + d \hrz, $ and since $\Psi_{\alpha_0}$ has Hodge
type $(1,0)$, one concludes that $ \{ \theta^1_{\alpha_0} + \tau(
t^1_{\alpha_0} ) \}^{0,1} = \bar{\partial} \hrz. $ Hence, we can
choose $f^0_{\alpha_0} = \hrz$.

By definition,
\begin{align*}
\gamma_{\alpha_0\alpha_1} & = \exp\left( \frac{1}{2 \pi i}\left\{
\hat{g}_{\alpha_0\alpha_1} + \delta(\hrz) \right\}  \right) \\
& = \exp\left( \frac{1}{2 \pi i}\left\{  \hat{g}_{\alpha_0\alpha_1} +
\delta(\hrz) \right\} +  ( 2 \pi i n_{\alpha_0\alpha_1}) \right) \\
& = \exp( \hGG ) = G_{\alpha_0\alpha_1}.
\end{align*}

Also,
\begin{align*}
\xi_{\alpha_0} & = \frac{1}{2 \pi i}\left(  \partial
\hat{\rho}_{\alpha_0} - \{ \theta^1_{\alpha_0} + \tau t^1_{\alpha_0}
\}^{1,0} \right)
=
\frac{1}{2 \pi i}\left(  d \hat{\rho}_{\alpha_0} - \{
\theta^1_{\alpha_0} + \tau t^1_{\alpha_0} \} \right) \\
& =
\frac{1}{2 \pi i}\left(  2 \pi i \psi_{\alpha_0} \right) =
\psi_{\alpha_0}.
\end{align*}
\end{proof}

\bibliographystyle{amsalpha}
\bibliography{references}

\end{document}